\documentclass[11pt,a4paper]{amsart}
\usepackage[T1]{fontenc}
\usepackage{lmodern}
\usepackage{amsmath,amssymb,mathtools,booktabs}
\usepackage{microtype}
\usepackage[a4paper,textwidth=155mm,textheight=235mm,centering]{geometry}
\usepackage[hidelinks]{hyperref}
\newtheorem{theorem}{Theorem}[section]
\newtheorem{maintheorem}{Theorem}

\newtheorem{proposition}[theorem]{Proposition}
\newtheorem{lemma}[theorem]{Lemma}
\newtheorem{corollary}[theorem]{Corollary}
\theoremstyle{remark}
\newtheorem{remark}[theorem]{Remark}
\newcommand{\F}{\mathbb F}
\newcommand{\N}{\mathbb Z_{\ge0}}
\newcommand{\Cart}{\mathcal C}
\newcommand{\divis}{\operatorname{div}}
\newcommand{\HH}{H^0}
\numberwithin{equation}{section}
\title[$a$-numbers of covers in characteristic two]{$a$-numbers of covers in characteristic two:\\ semigroups and Riemann--Roch spaces}
\author{Gilberto B. Almeida Filho}
\address{Departamento de Ci\^encias Exatas e da Terra, Universidade Federal de Mato Grosso, V\'arzea Grande, Mato Grosso, Brazil}
\email{gilberto.filho@ufmt.br}
\author{Saeed Tafazolian}
\address{IMECC, Universidade Estadual de Campinas, Campinas, S\~ao Paulo, Brazil}
\email{saeed@unicamp.br}
\author{St\'efani C. Vieira}
\address{Departamento de Ci\^encias Exatas e da Terra, Universidade Federal de Mato Grosso, V\'arzea Grande, Mato Grosso, Brazil}
\email{stefani.vieira1@ufmt.br}
\date{}
\subjclass[2020]{Primary 14H40; Secondary 11G20, 14H55}
\keywords{Cartier operator, a-number, Weierstrass semigroup, cyclic cover, maximal curve}
\begin{document}
\begin{abstract}
We compute the $a$-numbers of the Garcia--G\"uneri--Stichtenoth curves, the Beelen--Montanucci curves, and the Skabelund covers of Suzuki curves in characteristic two. The proofs use the identity $a(X)=\ell(\tfrac12\operatorname{div}(dt))$ and reduce the calculations to Riemann--Roch spaces on the base curves. We obtain transfer formulas for odd-degree radical covers, with one or several points at infinity, and evaluate them using Weierstrass semigroups and plane sections of the Hermitian curve. For the first two families we also give cyclic models over the projective line and relate the formulas to Elkin's rank formula. We then treat two classes of Artin--Schreier fibre products. A supersingular genus-six family has $a$-number $3$ at the zero parameter and $2$ elsewhere, although the genera and $a$-numbers of its quadratic quotients remain fixed. We determine its Ekedahl--Oort types as $(0,0,0,1,2,3)$ and $(0,1,1,2,3,4)$, respectively.
\end{abstract}
\maketitle
\enlargethispage{2pt}

\section{Introduction}
Let $k=\overline{\F}_2$. For a smooth projective connected curve $X$ over $k$, the $a$-number of its Jacobian is
\[
a(X)=\dim_k\ker\bigl(\Cart:\HH(X,\Omega_X^1)\longrightarrow\HH(X,\Omega_X^1)\bigr),
\]
where $\Cart$ denotes the Cartier operator. This invariant contains information about the group scheme $\operatorname{Jac}(X)[2]$ which is not determined by the Newton polygon.

We calculate this invariant for several families of covers by counting sections of half-canonical divisors. Suppose $Y$ has a function $f$ with pole divisor $NQ$ and $\divis(df)=(2g_Y-2)Q$. For a positive odd integer $m$ coprime to $N$, Theorem~\ref{thm:transfer} determines the semigroup and $a$-number of the radical cover $z^m=f$ by
\[
H_X(P)=mH_Y(Q)+N\N,\qquad
a(X)=\sum_{j=0}^{m-1}\ell_Y\!\left(\left\lfloor\frac{g_X-1-jN}{m}\right\rfloor Q\right).
\]
The same argument gives a multipoint formula (Theorem~\ref{thm:multitransfer}). For covers unramified away from one point, Proposition~\ref{prop:wildpreserve} also applies in the presence of wild ramification.

\subsection*{Main results}
Giulietti and Korchm\'aros \cite{GK} constructed a maximal curve over $\F_{q^6}$ which, for $q>2$, is not covered by the Hermitian curve maximal over the same field. Garcia, G\"uneri, and Stichtenoth \cite{GGS} extended this construction to an infinite family. For $q=2^h$, $h\ge1$, and odd $n\ge3$, let $\mathcal G_{q,n}$ be the smooth projective model of
\begin{equation}\label{eq:GGSmodel}
x^q+x=y^{q+1},\qquad z^{d_n}=y^{q^2}+y,
\qquad d_n=\frac{q^n+1}{q+1}.
\end{equation}
The case $n=3$ is the Giulietti--Korchm\'aros curve. The genus is
\[
g(\mathcal G_{q,n})=\frac{(q-1)(q^{n+1}+q^n-q^2)}2.
\]
The following theorem records the resulting formula for this family.

\begin{maintheorem}\label{thm:GGS}
Let $q=2^h$, $h\ge1$, and let $n\ge3$ be odd. For $n=3$,
\[
a(\mathcal G_{q,3})=\frac{q(q+1)(3q^3-6q^2+10q-4)}{24}.
\]
For $n\ge5$,
\begin{equation}\label{eq:GGSanswer}
a(\mathcal G_{q,n})=
\frac{q^{n-3}(3q^6+q^4+2q^3-q^2-2q)-q^3+4q}{24(q+1)}.
\end{equation}
For each fixed odd $n\ge3$, the ratio $a(\mathcal G_{q,n})/g(\mathcal G_{q,n})$ tends to $1/4$ as $q=2^h$ tends to infinity.
\end{maintheorem}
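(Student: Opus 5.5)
The plan is to apply the one-point transfer formula, Theorem~\ref{thm:transfer}, with base curve $Y$ the Hermitian curve $x^q+x=y^{q+1}$ and $Q$ its unique point at infinity. Then $g_Y=q(q-1)/2$ and $H_Y(Q)=\langle q,q+1\rangle$. The function $y$ has pole divisor $qQ$, so $f=y^{q^2}+y$ has pole divisor $NQ$ with $N=q^3$. In characteristic two $df=dy$, and on the Hermitian curve $\divis(dy)=(q^2-q-2)Q=(2g_Y-2)Q$, which is exactly the hypothesis of the theorem. The degree $m=d_n=(q^n+1)/(q+1)$ is odd (a sum of an odd number of powers of $q$ with alternating signs, ending in $+1$), hence coprime to $N=q^3$. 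Theorem~\ref{thm:transfer} therefore gives
\[
a(\mathcal G_{q,n})=\sum_{j=0}^{d_n-1}\ell_Y\!\left(\left\lfloor\frac{g-1-jq^3}{d_n}\right\rfloor Q\right),
\qquad g=g(\mathcal G_{q,n}).
\]
Terms with negative argument vanish. For $s\ge 0$ we have $\ell_Y(sQ)=\#\{h\in\langle q,q+1\rangle: h\le s\}$. This equals $s+1-g_Y$ once $s\ge 2g_Y-1=q^2-q-1$. Below that, write $s=uq+v$ with $0\le v<q$; the count is then an explicit quadratic-type expression, namely $\sum_{i\le u}\min(i,v')+\dots$, obtained by counting pairs $iq+j(q+1)\le s$ with $j\le i$.

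The second step is to understand the arguments $s_j=\lfloor (g-1-jq^3)/d_n\rfloor$ explicitly. For $n=3$ we have $d_3=q^2-q+1$ and $q^3\equiv -1\pmod{d_3}$, since $q^3+1=(q+1)d_3$. Hence $q^3/d_3=q+1-1/d_3$, and the $s_j$ decrease by $q+1$ at each step with a carry controlled by $j$. I will compute $g-1$ modulo $d_3$ and write $s_j=s_0-j(q+1)+\lfloor(r+j)/d_3\rfloor$, with $r$ the residue of $g-1$.

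For $n\ge5$ we have $q^3<d_n$. Now $g-1\approx q^{n+1}\cdot(q-1)/2$ while $d_n\approx q^{n-1}$, so $s_0\approx q^2(q-1)/2$. The sequence $s_j$ then decreases in steps of $0$ or $1$ roughly every $d_n/q^3$ indices. I will organise the sum by values: for each integer $s$, count the number $c(s)$ of $j$ with $s_j=s$. Using $q^n\equiv -1\pmod{d_n}$, I will try to express $q^3$ modulo $d_n$ and show that $c(s)$ is essentially constant, equal to $\lfloor d_n/q^3\rfloor$ or its ceiling. Here $d_n/q^3=q^{n-4}-q^{n-5}+\dots$, and the finitely many correction terms depend only on $q$, not on $n$. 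This explains why a single closed formula covers all $n\ge5$, whereas $n=3$ behaves differently.

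The main obstacle is this bookkeeping: evaluating $\sum_s c(s)\,\ell_Y(sQ)$ exactly. It splits into the range $s\ge q^2-q-1$, where $\ell_Y$ is linear and the sum is a polynomial, and the range $s<q^2-q-1$, where I will sum the gap-count formula over $s=uq+v$ in closed form. Afterwards I will check the resulting polynomial identity against \eqref{eq:GGSanswer} and the $n=3$ formula. I will also sanity-check against small cases such as $q=2$, $n=3,5$, for example via a direct computation or Elkin's rank formula.

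Finally, the asymptotic claim follows from the leading terms. For $n=3$, the formula gives $a\sim 3q^5/24=q^5/8$, while $g\sim q^5/2$. For $n\ge5$, it gives $a\sim 3q^{n+3}/(24q)=q^{n+2}/8$, while $g\sim q^{n+2}/2$. In both cases $a/g\to 1/4$ as $q=2^h\to\infty$.
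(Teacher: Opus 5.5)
\paragraph{Verdict.} The proposal has a genuine gap: the case $n\ge5$ is not proved.

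\paragraph{What is correct.} Your reduction is the paper's: Theorem~\ref{thm:transfer} over the Hermitian curve with $f=y^{q^2}+y$ and $N=q^3$. For $n=3$ the computation closes. One has $g-1=d_0(q+1)(q^2/2-1)$, so $r=0$ and $s_j=(q+1)(q^2/2-1-j)$. Only $\sum_{i=0}^{q^2/2-1}\ell_Y(i(q+1)Q)$ then remains, which the paper evaluates with the small-degree values and Riemann--Roch reflection. The limit statement is also fine once the closed forms are known.

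\paragraph{Where it fails.} The case $n\ge5$ is the substance of the theorem. You defer the exact evaluation of $\sum_s c(s)\ell_Y(sQ)$ as ``the main obstacle'', and the mechanism you sketch would not settle it.
\begin{itemize}
\item Because $q^3<d_n$, the residue of $q^3$ modulo $d_n$ is just $q^3$. What controls $c(s)$ is the residue of $d_n$ and of $g-1$ modulo $q^3$.
\item The deviations from constancy are not a few boundary corrections. The larger value of $c(s)$ occurs at roughly $q^2/2$ of the roughly $q^3/2$ relevant $s$, in a pattern that must be determined.
\item Their weighted contribution is of order $q^5$. It has to be computed exactly as a function of $q$; checks for small $q$ do not prove a formula for all $h$.
\end{itemize}

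\paragraph{The missing ingredient.} It is the congruence $d_n=d_0+N+2Nu_n$ with $u_n=(q^{n-3}-q-2)/(2(q+1))$. This puts every $d_n$ with $n\ge5$ in a single class modulo $2N$. In your language, put $G=g-1$ and $A=(q+1)(q^2-1)$. Then:
\begin{itemize}
\item $s_0=(A-1)/2$, independently of $n$, and $c(s_0)=u_n+1$;
\item $c(s)=2u_n+1+\epsilon(s)$ for $s<s_0$, where $\epsilon(s)=\mathbf 1[(G-sd_0)\bmod N<d_0]$ does not depend on $n$.
\end{itemize}
Hence $a$ is affine in $u_n$, with slope $2\sum_{s<s_0}\ell_Y(sQ)+\ell_Y(s_0Q)$, which is the paper's $\Sigma_q$.

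The constant term still requires locating the $s$ with $\epsilon(s)=1$. Using $d_0(q+1)=N+1$, these $s$ form the class $s\equiv q/2+1\pmod{q+1}$ for $s<q^2/2-q$. For $s\ge q^2/2-q$ the class shifts to $s\equiv q/2\pmod{q+1}$. You must then sum $\ell_Y(sQ)$ along these progressions, including the range where $\ell_Y$ is not yet linear.

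\paragraph{How the paper does it.} The paper does the equivalent work by grouping over Ap\'ery elements $d(aq+b(q+1))$ rather than over $s$. It reduces to $d=d_0+N$ by periodicity modulo $2N$. It then compares with the GK value via $\lfloor((d_0+N)T+N-1)/(2N)\rfloor=F+(T-1)/2+\mathbf 1_{R\ge N}$. The pairs with $R\ge N$ are counted by the base-$q$ digit argument of Lemma~\ref{lem:arithmetic}. Without an explicit analysis of this kind, \eqref{eq:GGSanswer} is not established.
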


Another application concerns the cyclic cover of the Suzuki curve constructed by Skabelund \cite{Skabelund}. Here let $q_0=2^s$ and $q=2q_0^2$, with $s\ge1$, and let $X_q$ have affine equations
\begin{equation}\label{eq:model}
y^q+y=x^{q_0}(x^q+x),\qquad t^{q-2q_0+1}=x^q+x.
\end{equation}
This curve is maximal over $\F_{q^4}$, and
\begin{equation}\label{eq:genus}
g(X_q)=\frac{q(q-1)^2}{2}.
\end{equation}

\begin{maintheorem}\label{thm:main}
Let $X_q$ be the smooth projective curve defined by \eqref{eq:model}, where $q_0=2^s$, $q=2q_0^2$, and $s\ge1$. Then
\begin{equation}\label{eq:answer}
a(X_q)=\frac{3q_0^6-3q_0^5+2q_0^4+q_0^3+q_0^2-q_0}{3}.
\end{equation}
Consequently,
\[
\operatorname{rank}\Cart
=\frac{9q_0^6+3q_0^5-14q_0^4-q_0^3+2q_0^2+q_0}{3},
\]
and
\begin{equation}\label{eq:Skabbounds}
\frac14-\frac1{4q_0}<\frac{a(X_q)}{g(X_q)}<\frac14.
\end{equation}
In particular, $a(X_q)/g(X_q)\to1/4$ as $s\to\infty$.
\end{maintheorem}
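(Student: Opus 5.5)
The plan is to apply Theorem~\ref{thm:transfer} with base curve the Suzuki curve $Y$, defined by $y^q+y=x^{q_0}(x^q+x)$, and with $f=x^q+x$ and $m=q-2q_0+1$. First I check the hypotheses.

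\begin{itemize}
\item $Y$ has genus $g_Y=q_0(q-1)$ and a single point $Q$ at infinity.
\item The pole orders at $Q$ are $v_Q(x)=-q$ and $v_Q(y)=-(q+q_0)$. So $f=x^q+x$ has pole divisor $NQ$ with $N=q^2$.
\item Since the characteristic is two, $df=dx$. It is classical for the Suzuki curve that $\divis(dx)=(2g_Y-2)Q$, because $x$ is a separating variable unramified in the affine part. This is the required condition on $\divis(df)$.
\item The integer $m=q-2q_0+1$ is odd, hence coprime to $N=q^2$, which is a power of two.
\item The zeros of $f$ are the $q^2$ affine points with $x\in\F_q$, and they are simple. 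Hence the cover is totally ramified there and $X_q$ is indeed the radical cover of Theorem~\ref{thm:transfer}.
\end{itemize}

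As a consistency check, the Riemann--Hurwitz computation in that theorem should reproduce $g(X_q)=q(q-1)^2/2$ from \eqref{eq:genus}. The theorem then gives
\[
a(X_q)=\sum_{j=0}^{m-1}\ell_Y\!\left(\left\lfloor\frac{g_X-1-jq^2}{m}\right\rfloor Q\right).
\]

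Next I evaluate each summand using the Weierstrass semigroup $H_Y(Q)=\langle q,\,q+q_0,\,q+2q_0,\,q+2q_0+1\rangle$. For $k\ge0$, $\ell_Y(kQ)$ is the number of elements of $H_Y(Q)$ that are at most $k$. For $k\ge 2g_Y-1$ this equals $k+1-g_Y$, and for $k<0$ it is $0$.

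Let $k_j$ denote the $j$-th argument. I split the range of $j$ into three parts:
\begin{itemize}
\item indices with $k_j\ge 2g_Y-1$, handled linearly by Riemann--Roch;
\item indices with $k_j<0$, which contribute nothing;
\item an intermediate window with $0\le k_j<2g_Y-1$, where actual semigroup counting is needed.
\end{itemize}

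To write $k_j$ explicitly, I use $g_X-1=(q^3-2q^2+q-2)/2$ and $m=q-2q_0+1$. The floor should then be expressible in terms of $q_0$ and $j$, possibly with a correction according to a residue class of $j$. For the counting window I would use the standard explicit description of the Suzuki semigroup. Its elements below $2g_Y$ lie in blocks $[iq,\,iq+\dots]$, and the block sizes grow in a controlled pattern determined by the generators $q+q_0,\,q+2q_0,\,q+2q_0+1$. From this I get a closed count $\#\{h\in H_Y(Q):h\le k\}$ as a piecewise polynomial in $k$ with respect to $q$ and $q_0$.

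Summing these counts over the window and adding the linear part should give a polynomial in $q_0$. After substituting $q=2q_0^2$, I expect this to equal \eqref{eq:answer}.

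The main obstacle is this intermediate counting step. It requires a precise closed form for the semigroup counting function on the relevant range, together with a careful evaluation of the floors $k_j$. I would first tabulate small cases, $s=1$ ($q=8$) and $s=2$ ($q=32$), to pin down the shape, and then prove the general count block by block.

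The remaining assertions are routine. The rank statement is $\operatorname{rank}\Cart=g(X_q)-a(X_q)$, which is a direct polynomial subtraction using $g=q_0^2(2q_0^2-1)^2$. For the bounds \eqref{eq:Skabbounds} I compare $4a$ with $g$ and with $g(1-1/q_0)$ as polynomials in $q_0$, and check positivity of the differences for $q_0\ge2$. The limit $a(X_q)/g(X_q)\to1/4$ then follows immediately.
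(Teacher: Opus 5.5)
The setup matches the paper's first step: Theorem~\ref{thm:transfer} applied to the Suzuki curve with $f=x^q+x$, $N=q^2$ and $m=q-2q_0+1$. Your hypothesis checks and the routine final steps (rank, bounds, limit) are fine. But the proposal stops where the content of the theorem begins. You never determine the arguments $k_j$, you never carry out the count in your ``intermediate window'', and you only say you \emph{expect} the sum to equal \eqref{eq:answer}. Tabulating $s=1,2$ cannot replace the argument. Two concrete ideas are missing.

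\textbf{The floors are exact.} No residue-class analysis is needed. With $\Delta_S=q+2q_0+1$ one has $m\Delta_S=q^2+1$ and $g-1=(q^2+1)(q_0^2-1)$. Hence $g-1-jq^2=m\Delta_S(q_0^2-1-j)+j$ with $0\le j<m$, so $k_j=\Delta_S(q_0^2-1-j)$ exactly. Since $m-1\ge q_0^2-1$, this gives $a(X_q)=\sum_{n=0}^{q_0^2-1}\ell_Y(n\Delta_SQ)$.

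\textbf{Split at $g_Y-1$, not at $2g_Y-1$.} Your split leaves semigroup counting for all $n\le 2q_0-2$, including the range above $g_Y-1$, where the count is no longer a simple tuple count. Since $g_Y-1=(q_0-1)\Delta_S$, the canonical divisor $\divis(dx)=2(q_0-1)\Delta_SQ$ is itself a multiple of $\Delta_SQ$. Riemann--Roch then gives, for every integer $n$,
\[
\ell_Y(n\Delta_SQ)=(n-q_0+1)\Delta_S+\ell_Y\bigl((2q_0-2-n)\Delta_SQ\bigr).
\]
This reflects every term with $n\ge q_0$ onto $n\le q_0-2$, or onto zero. So semigroup counting is only needed for $0\le n\le q_0-1$, and there it is elementary. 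Every nongap has a unique expression $\alpha q+\beta(q+q_0)+\gamma(q+2q_0)+\delta\Delta_S$ with $\beta\le1$ and $\gamma,\delta<q_0$. Uniqueness must be proved; it follows by reducing successively modulo $q_0$, $2q_0$ and $q$. The inequality $n(2q_0+1)<q$ gives $n\Delta_S<(n+1)q$. Hence such an element is at most $n\Delta_S$ if and only if $\alpha+\beta+\gamma+\delta\le n$, which yields the count $\binom{n+3}{3}+\binom{n+2}{3}$. Assembling the pieces gives
\[
a(X_q)=2\sum_{n=0}^{q_0-2}\ell_Y(n\Delta_SQ)+\ell_Y\bigl((q_0-1)\Delta_SQ\bigr)+\Delta_S\sum_{u=1}^{q_0^2-q_0}u .
\]
This equals $\frac{\Delta_S(q_0^2-q_0)(q_0^2-q_0+1)}{2}+\frac{q_0(q_0+1)(q_0^2+q_0+1)}{6}$, which expands to \eqref{eq:answer}.
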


A second generalization of the GK curve was introduced by Beelen and Montanucci \cite{BMnew}. For the same $q$ and $n$, let $\mathcal B_{q,n}$ be the smooth projective model of
\begin{equation}\label{eq:BMmodel}
y^{q+1}=x^{q+1}+1,\qquad
z^{d_n}=y\frac{x^{q^2}+x}{x^{q+1}+1}.
\end{equation}
It is maximal over $\F_{q^{2n}}$ and has the same genus as $\mathcal G_{q,n}$. For $n=3$ it is isomorphic to the GK curve. For $n\ge5$ the two generalizations are nonisomorphic by \cite[Corollary~2.6]{BMnew}. We compute its $a$-number by applying the multipoint formula to the cover of the Hermitian curve in \eqref{eq:BMmodel}.

\begin{maintheorem}\label{thm:BM}
Let $q=2^h$, $h\ge1$, and let $n\ge3$ be odd. Then
\begin{equation}\label{eq:BManswer}
a(\mathcal B_{q,n})=
\frac{q^n(3q^3-6q^2+4q+2)+3q^4-10q^2+4q}{24(q-1)}.
\end{equation}
For $n=3$, this is the GK formula in Theorem~\ref{thm:GGS}. For each fixed odd $n\ge3$, the ratio $a(\mathcal B_{q,n})/g(\mathcal B_{q,n})$ tends to $1/4$ as $q=2^h$ tends to infinity.
\end{maintheorem}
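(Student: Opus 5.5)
We compute $a(\mathcal B_{q,n})$ by the multipoint transfer formula (Theorem~\ref{thm:multitransfer}) applied to the radical cover $z^{d_n}=f$ of the Hermitian curve $H:y^{q+1}=x^{q+1}+1$, where $f=y(x^{q^2}+x)/(x^{q+1}+1)$. The degree $d_n=(q^n+1)/(q+1)$ is odd, as required by that theorem.

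\textbf{Step 1: divisor of $f$ and ramification.} The affine zeros of $x^{q^2}+x$ on $H$ are the $\F_{q^2}$-rational affine points, namely $q^3+1-(q+1)=q^3-q$ of them once we exclude the $q+1$ points at infinity. The zeros of $x^{q+1}+1$ are the $(q+1)^2$ points with $y=0$, and these are cancelled by the factor $y$. A direct computation of valuations then shows that $\divis(f)$ is a sum of simple zeros at the rational affine points with $x^{q+1}\neq 1$, together with poles of a common order $N$ at the $q+1$ points $P_\infty^{(i)}$ at infinity. I expect $N=q^2-q$ after cancellation, and $\gcd(N,d_n)=1$ since $d_n\equiv 1 \pmod q$ and $d_n$ is odd. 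The cover is totally ramified at these points. Since $\gcd$ of every order with $d_n$ is $1$, the function $f$ becomes a $d_n$-th power of a local parameter up to units.

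\textbf{Step 2: the half-canonical divisor.} On $H$ we have $\divis(dx)=(q-2)\sum_i P_\infty^{(i)}$, so a canonical divisor is supported on the line at infinity. The points at infinity play the role of the several poles in Theorem~\ref{thm:multitransfer}. That theorem should express
\[
a(\mathcal B_{q,n})=\sum_{j=0}^{d_n-1}\ell_H\bigl(D_j\bigr),
\]
where each $D_j$ is obtained as the floor of $(K_X/2-j\,\divis(f))/d_n$ restricted to $H$. Concretely, $D_j$ is a combination $\lfloor\cdot\rfloor\sum_i P_\infty^{(i)}$ minus the affine zeros whenever the $j$-th coefficient rounds that way. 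Because $\sum_i P_\infty^{(i)}$ is a line section of $H$, $\ell_H(r\sum_i P_\infty^{(i)})$ is the number of monomials $x^ay^b$ with $a+b\le r$ and $b\le q$. Subtracting the affine rational points corresponds to requiring vanishing on $x^{q^2}+x$ divided by $x^{q+1}+1$; in the range where these impose independent conditions, this lowers the degree by $q^2-q$. This is where the abstract's ``plane sections of the Hermitian curve'' enter.

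\textbf{Step 3: summation.} Split the index $j$ into residue classes according to the floors, so that each $\ell_H(D_j)$ is an explicit quadratic polynomial in the effective degree $r_j$. The count is $\binom{r+2}{2}-\binom{r-q+1}{2}$ for $r\ge q$, and $\binom{r+2}{2}$ for $0\le r<q$. Summing over $j$ yields sums of floors of linear forms in $j$ with denominator $d_n$, which evaluate in closed form using $q^n\equiv -1 \pmod{q+1}$ and $n$ odd. Simplifying should give \eqref{eq:BManswer}.

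\textbf{Step 4: checks.} As a check, I substitute $n=3$ and verify that the result equals the GK expression of Theorem~\ref{thm:GGS}: expanding $q^3(3q^3-6q^2+4q+2)+3q^4-10q^2+4q$ and dividing by $q-1$ should give $q(q+1)(3q^3-6q^2+10q-4)\cdot$ the right factor. For the limit, the leading term $3q^{n+3}/(24q)=q^{n+2}/8$ compared with $g\sim q^{n+2}/2$ gives ratio $1/4$.

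The main obstacle is Step 2/3: correctly identifying $D_j$, including the affine part where the simple zeros of $f$ subtract points, and deciding when the $q^3-q$ conditions are independent. Near the top range of $j$ the divisor has small degree and the count is not simply the generic monomial formula. I would first try to handle this by expressing everything through the semigroup generated at infinity, using $1,x,y$ and the function $x^{q^2}+x$ divided by $x^{q+1}+1$, and then computing dimensions via the Riemann--Roch theorem in the nonspecial range.
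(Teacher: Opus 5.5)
Your framework is the right one: the multipoint transfer over the Hermitian curve, with $\ell(kI)$ counted by monomials $x^ay^b$ with $a+b\le k$ and $b\le q$. But the proposal stops where the proof begins. Step~3 contains no computation, and its inputs are not correctly determined.

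\textbf{Inputs.} The multipoint theorem needs $\divis(df)$, and your claim $\divis(dx)=(q-2)I$ is false. In fact $\divis(dx)=-2I+q\sum(x_0,0)$, summed over the $q+1$ points with $y=0$. The needed observation is $f=(x^{q^2}+x)/y^q$. Hence $df=dx/y^q$, whose divisor is $(q-2)I$ by adjunction. This gives $b_m=(m(q^2-1)-N_B-1)/2$ with $N_B=q^2-q$, a number you never write down.

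Your Step~1 zero set is also wrong. $f$ has simple zeros at all $q^3-q$ affine $\F_{q^2}$-points, including the $q+1$ points with $y=0$: there $x^{q^2}+x$ vanishes to order $q+1$ and $y^{-q}$ contributes $-q$. Otherwise the degrees do not balance. Finally, $d_n\equiv1\pmod q$ does not give coprimality with $q-1$. Use $d_n=1+N_B(1+q^2+\cdots+q^{n-3})$ instead.

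\textbf{The obstacle you name.} The ``affine part'' obstacle does not exist. With character components $hz^j$, $0\le j<m$, the valuation above a simple zero of $f$ is $m\,v(h)+j$. So regularity there is just $v(h)\ge0$, and $D_j=\lfloor(b_m-jN_B)/m\rfloor I$ is supported at infinity. With your sign (components $hz^{-j}$) you get $D_j=kI-Z$, where $Z$ is the zero divisor of $f$. Then $h\mapsto h/f$ is an isomorphism $L(kI-Z)\to L((k-N_B)I)$, because $Z-N_BI=\divis(f)$. No independence question ever arises, and ``small degree'' only means $\ell(kI)=0$ for $k<0$.

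\textbf{The missing evaluation.} The step that is actually absent is evaluating
\[
\sum_{j=0}^{d_n-1}\ell\bigl(\lfloor (b_m-jN_B)/d_n\rfloor I\bigr)
\]
when $d_n$ grows with $n$; citing $q^n\equiv-1\pmod{q+1}$ does not do this. The paper expands $\ell(kI)=\sum_{0\le i\le k}\min\{i+1,q+1\}$ and swaps the sums. For each $i$ the number of admissible $j$ is then
\[
\lfloor (m(q^2-1-2i)+N_B-1)/(2N_B)\rfloor,
\]
and only $i\le q^2/2-1$ contribute. The denominator $2N_B$ is now independent of $m$, so the count is affine in $m$ on each residue class mod $2N_B$. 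Since $d_n\equiv N_B+1\pmod{2N_B}$, this yields
\[
a(\mathcal B_{q,n})=a(\mathcal B_{q,3})+\frac{d_n-d_3}{2N_B}\,S,
\qquad S=\frac{q(q+1)(3q^3-6q^2+4q+2)}{12},
\]
with the base value supplied by Proposition~\ref{prop:GK}.

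Without this reduction, or some other explicit evaluation, \eqref{eq:BManswer} is asserted rather than proved. Your Step~4 checks are correct, but they only confirm consistency with the target formula.
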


\subsection*{Relation to previous work}
The identity $a(X)=\ell(\tfrac12\divis(dt))$ is classical. Friedlander, Garton, Malmskog, Pries, and Weir used its one-point form to compute the $a$-number of the Suzuki curve \cite[Section~3.3 and Theorem~4.1]{FGMPW}. We recall their calculation in Section~\ref{sec:Suzuki} and compute the further Riemann--Roch dimensions required for the Skabelund cover. The Hermitian value is known \cite{Gross,MS}, as is the value $q^2/16$ for the Abd\'on--Torres curve
\[
\sum_{i=0}^{h-1}y^{2^i}=x^{q+1},\qquad q=2^h\ge4,
\]
computed by Nourozi, Tafazolian, and Rahmati \cite[Theorem~4.1]{NTR}. Section~4 gives semigroup proofs of these formulas.

Both $\mathcal G_{q,n}$ and $\mathcal B_{q,n}$ admit cyclic models over $\mathbb P^1$. Their $a$-numbers therefore also follow from Elkin's characteristic-two formula \cite[proof of Corollary~1.4]{Elkin}. We give these models and their branch data in Section~\ref{subsec:cycliccomparison}. Theorems~\ref{thm:GGS} and~\ref{thm:BM} provide closed expressions for the resulting invariants, obtained here by Riemann--Roch calculations on the Hermitian base. Our transfer formulas apply to bases of arbitrary genus; the Skabelund calculation uses the Suzuki curve as base. The common-monomial fibre products in Section~\ref{sec:fibre} also admit cyclic models over $\mathbb P^1$.

The geometric data for the GK curve are given in \cite{GK,BM}. Skabelund stated the semigroup generators for his Suzuki cover \cite{Skabelund}, and Beelen, Landi, and Montanucci proved the semigroup equality \cite{BLM}. For the Beelen--Montanucci curve, the divisor of $dz$ and spaces of regular differentials were studied by Montanucci and Pallozzi Lavorante \cite{MPL}. We use these descriptions to evaluate the half-canonical dimensions. In Section~\ref{sec:fibre}, a genus-six fibre product gives a family with varying $a$-number and constant quadratic quotient genera and $a$-numbers. Its Ekedahl--Oort type is determined in Proposition~\ref{prop:fibreEO} by computing the required subspaces of de Rham cohomology.

\subsection*{Organization}
Section~2 proves the transfer formulas and relates them to Elkin's formula. Sections~3 and~4 collect the Suzuki and additive calculations. The GK and GGS curves are treated in Sections~5--7, the Beelen--Montanucci curves in Section~8, and the Skabelund covers in Section~9. Section~10 concerns Artin--Schreier fibre products and the genus-six Ekedahl--Oort calculation.

\section{The semigroup counting method}
All curves below are smooth, projective, and connected over $k=\overline{\F}_2$, unless an affine model is specified. For a divisor $E$ on a curve $Y$, write $\ell_Y(E)=\dim_k L_Y(E)$. The semigroup $H_Y(P)$ consists of the pole orders at $P$ of functions regular away from $P$, including zero. In particular,
\[
\ell_Y(jP)=\#\bigl(H_Y(P)\cap[0,j]\bigr)\quad(j\ge0).
\]
For a numerical semigroup $H$ and a positive element $e\in H$, its Ap\'ery set with respect to $e$ consists of the least element of $H$ in each residue class modulo $e$. If these elements are $w_0,\ldots,w_{e-1}$, then
\[
g(H)=\frac1e\sum_{i=0}^{e-1}w_i-\frac{e-1}{2}.
\]
Here $g(H)$ is the number of gaps.

We begin with the Cartier-kernel calculation used in \cite[Section~3.3, equation~(3.4), and proof of Theorem~4.1]{FGMPW}.

For every separating function $t$, the divisor of $dt$ has even coefficients in characteristic two. Locally, in a uniformizer $u$, write $t=\sum_j c_j u^j$. The nonzero terms of $dt$ have exponents $j-1$ with $j$ odd, so their least exponent is even. Thus $\tfrac12\divis(dt)$ is an integral divisor of degree $g-1$. If $s$ is another separating function, write $ds=(a^2+t b^2)dt$. Since $\Cart(ds)=0$, one has $b=0$; hence $\tfrac12\divis(ds)$ differs from $\tfrac12\divis(dt)$ by a principal divisor. This is the canonical theta characteristic of a characteristic-two curve; see, for example, \cite[Section 2]{RPT}. The next lemma expresses its number of sections as the $a$-number.

\begin{lemma}\label{lem:cartier}
Let $Y$ be a smooth projective curve over $k$, and let $t\in k(Y)$ be a separating variable. Suppose $\divis(dt)=2E$ for a divisor $E$. Then
\[
\ker\bigl(\Cart\vert_{\HH(Y,\Omega_Y^1)}\bigr)
=\{u^2dt:u\in L_Y(E)\},
\qquad a(Y)=\ell_Y(E).
\]
\end{lemma}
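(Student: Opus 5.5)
The plan is to use the $p$-basis decomposition of differentials relative to $t$. Since $k(Y)$ has degree $2$ over $k(Y)^2$ and $t$ is separating, every element of $k(Y)$ can be written uniquely as $a^2+tb^2$. Consequently every meromorphic differential $\omega$ can be written uniquely as $\omega=(a^2+tb^2)\,dt$ with $a,b\in k(Y)$. The Cartier operator acts by $\Cart(\omega)=b\,dt$. This follows from its defining properties: $\Cart(a^2\,dt)=a\,\Cart(dt)=0$ because $dt$ is exact, and $\Cart(a^2 t\,dt)=a\,\Cart(t\,dt)=a\,dt$. Hence $\Cart(\omega)=0$ exactly when $b=0$, that is, when $\omega=u^2\,dt$ for some $u\in k(Y)$.

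Next I determine which $u^2\,dt$ are regular. At each point $P$ we have
\[
v_P(u^2\,dt)=2v_P(u)+v_P(dt)=2\bigl(v_P(u)+v_P(E)\bigr),
\]
using $\divis(dt)=2E$. So $u^2\,dt$ is holomorphic if and only if $v_P(u)\ge -v_P(E)$ for all $P$, which means $u\in L_Y(E)$. This gives the set equality
\[
\ker\bigl(\Cart\vert_{\HH(Y,\Omega_Y^1)}\bigr)=\{u^2\,dt:u\in L_Y(E)\}.
\]

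For the dimension, the map $\phi:L_Y(E)\to\ker\Cart$, $u\mapsto u^2\,dt$, is a bijection. It is injective because squaring is injective in characteristic two and $dt\neq0$. It is additive but Frobenius-semilinear over $k$: $\phi(cu)=c^2\phi(u)$. Since $k$ is perfect, a $\phi$-image of a $k$-basis of $L_Y(E)$ is still a $k$-basis of the image. Spanning holds because any $c$ equals $(c^{1/2})^2$, and linear independence follows by taking square roots of the coefficients. Therefore $a(Y)=\dim_k\ker\Cart=\ell_Y(E)$.

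The only step I expect to need care is justifying the formula $\Cart((a^2+tb^2)\,dt)=b\,dt$ and the uniqueness of the decomposition. I would cite the standard properties of $\Cart$: it is $p^{-1}$-linear, kills exact forms, and satisfies $\Cart(f^{p-1}\,df)=df$, here with $p=2$. I would also use that $\{1,t\}$ is a $p$-basis of $k(Y)$ over $k(Y)^2$ when $t$ is separating. The semilinearity point is routine but should be stated explicitly.
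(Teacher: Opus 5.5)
Your proposal is correct and follows the paper's proof step for step. Both arguments use the unique decomposition $\omega=(u^2+tv^2)\,dt$ with $\Cart(\omega)=v\,dt$, then show that $u^2\,dt$ is regular exactly when $u\in L_Y(E)$. Both then count dimensions using the Frobenius-semilinear bijection $u\mapsto u^2\,dt$ over the perfect field $k$.
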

\begin{proof}
Since $k$ is perfect and $t$ is separating, every rational differential has a unique expression
\[
\omega=(u^2+t v^2)dt,\qquad u,v\in k(Y).
\]
The defining formula for Cartier gives $\Cart(\omega)=v\,dt$. Thus its rational kernel consists of the forms $u^2dt$. Such a form is regular if and only if
\[
\divis(u^2dt)=2\divis(u)+2E\ge0,
\]
equivalently $u\in L_Y(E)$. The map $u\mapsto u^2dt$ is additive and Frobenius-semilinear. Since Frobenius is an automorphism of $k$, its image has dimension $\ell_Y(E)$.
\end{proof}

\begin{proposition}[Semigroup counting principle]\label{prop:method}
Let $Y$ be a curve of genus $g\ge1$ in characteristic two. Suppose that a separating function $t$ satisfies
\[
\divis(dt)=(2g-2)P
\]
for a point $P\in Y(k)$. Then
\begin{equation}\label{eq:method}
a(Y)=\ell_Y((g-1)P)=\#\bigl(H_Y(P)\cap[0,g-1]\bigr).
\end{equation}
If $f_j$ is a function with pole divisor $jP$ for each nongap $0\le j\le g-1$, with $f_0=1$, then
\[
\{f_j^2dt:j\in H_Y(P),\ 0\le j\le g-1\}
\]
is a basis of the regular Cartier kernel.
\end{proposition}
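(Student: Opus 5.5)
The plan is to apply Lemma~\ref{lem:cartier} with $E=(g-1)P$ and then read $\ell_Y((g-1)P)$ off the Weierstrass semigroup.

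First I check the hypothesis of the lemma. By assumption $\divis(dt)=(2g-2)P=2\bigl((g-1)P\bigr)$, so $E=(g-1)P$ is an integral divisor with $\divis(dt)=2E$. Since $g\ge1$, $E$ is effective. Lemma~\ref{lem:cartier} then gives
\[
\ker\bigl(\Cart\vert_{\HH(Y,\Omega_Y^1)}\bigr)=\{u^2dt:u\in L_Y((g-1)P)\}
\qquad\text{and}\qquad a(Y)=\ell_Y((g-1)P).
\]

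Next comes the semigroup identity. The space $L_Y((g-1)P)$ consists of functions regular away from $P$ with pole order at most $g-1$ at $P$. For a nonzero such $u$, its pole order $-v_P(u)$ is a nongap in $[0,g-1]$. Conversely, for each nongap $j\le g-1$ a function $f_j$ with pole divisor $jP$ exists by definition of $H_Y(P)$; for $j=0$ take $f_0=1$. These $f_j$ have distinct pole orders, so they are linearly independent. They also span: given $u$ with leading pole order $j$, subtract a scalar multiple of $f_j$ to strictly lower the pole order, and induct. Hence $\{f_j\}$ is a basis of $L_Y((g-1)P)$. This is the identity $\ell_Y(jP)=\#(H_Y(P)\cap[0,j])$ recalled earlier, taken at $j=g-1$, and it gives \eqref{eq:method}.

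Finally, the basis of the kernel. The map $u\mapsto u^2dt$ is additive and Frobenius-semilinear, and it is injective because $u^2dt=0$ forces $u=0$. Since $k$ is perfect, a semilinear bijection carries a $k$-basis to a $k$-basis. Concretely, $\sum c_j f_j^2dt=\bigl(\sum c_j^{1/2}f_j\bigr)^2dt$, so the images span, and a vanishing combination forces all $c_j^{1/2}=0$. Therefore $\{f_j^2dt\}$ is a basis of the regular Cartier kernel.

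There is no real obstacle here. The only point needing care is the semilinearity step: one should argue directly via square roots in $k$ rather than treating the map as $k$-linear.
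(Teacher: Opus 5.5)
Your proposal is correct and follows the paper's proof: apply Lemma~\ref{lem:cartier} with $E=(g-1)P$, show that the $f_j$ form a basis of $L_Y((g-1)P)$ because they have distinct pole orders covering every nongap up to $g-1$, and transport that basis through the Frobenius-semilinear bijection $u\mapsto u^2dt$. You write out the spanning and square-root steps that the paper leaves implicit, but the argument is the same.
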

\begin{proof}
Apply Lemma~\ref{lem:cartier} with $E=(g-1)P$. The functions $f_j$ have distinct pole orders and account for all the dimension jumps up to $g-1$, so they form a basis of $L_Y((g-1)P)$. Their squares times $dt$ give the asserted basis.
\end{proof}

The hypothesis concerns an exact differential. Symmetry of $H_Y(P)$ alone does not imply \eqref{eq:method}.

\begin{remark}\label{rem:char2}
The preceding reduction to one Riemann--Roch space is specific to characteristic two. In characteristic $p$, a separating variable gives
\[
\omega=\left(\sum_{i=0}^{p-1}u_i^pt^i\right)dt,
\qquad \Cart(\omega)=u_{p-1}dt.
\]
For $p>2$ its rational kernel has $p-1$ independent coefficients over $k(Y)^p$. Regularity of their sum need not imply regularity of the individual summands, so the same single-divisor count does not follow. For example, in characteristic three the smooth projective curve $y^3-y=x^4$ has genus $3$, with pole orders $3,4$ for $x,y$ and $\divis(dx)=4P_\infty$. The differential $y\,dx$ is regular and lies in the Cartier kernel, since
\[
y\,dx=(y^3-x^4)dx=\bigl(y^3+x(-x)^3\bigr)dx.
\]
Both summands $y^3dx$ and $-x^4dx$ have a pole of order $8$ at infinity, which cancels in their sum.
\end{remark}

\subsection{Transfer through odd-degree radical covers}
Let us first consider a radical cover with one point at infinity.

\begin{theorem}\label{thm:transfer}
Let $Y$ be a smooth projective curve over $k=\overline{\F}_2$, let $Q\in Y(k)$, and suppose $f\in k(Y)$ satisfies
\[
(f)_\infty=NQ,\qquad \divis_Y(df)=(2g_Y-2)Q.
\]
Here $N\ge1$. Let $m\ge1$ be odd with $\gcd(m,N)=1$, and let $X$ be the smooth projective curve with function field $k(Y)(z)$, where $z^m=f$. Put $H=H_Y(Q)$. Then there is a unique point $P$ above $Q$, and
\begin{align}
g_X&=m g_Y+\frac{(m-1)(N-1)}2,\label{eq:transfergenus}\\
\divis_X(dz)&=(2g_X-2)P,\label{eq:transferdiff}\\
H_X(P)&=mH+N\N.\label{eq:transferH}
\end{align}
In particular,
\begin{equation}\label{eq:transfera}
a(X)=\sum_{j=0}^{m-1}
\ell_Y\!\left(\left\lfloor\frac{g_X-1-jN}{m}\right\rfloor Q\right).
\end{equation}
If $\mathcal A=\operatorname{Ap}(H,N)$, the same number is
\begin{equation}\label{eq:transferApery}
a(X)=\sum_{w\in\mathcal A}
\max\left\{0,\,1+\left\lfloor\frac{g_X-1-mw}{N}\right\rfloor\right\}.
\end{equation}
\end{theorem}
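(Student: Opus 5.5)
The plan is to read off the ramification of $X\to Y$ from the hypotheses on $f$. Next I would compute $\divis_X(dz)$ directly. Then I would identify the ring of functions regular away from $P$ as $\bigoplus_{j<m}\mathcal O_Y(Y\setminus Q)\,z^j$. Finally I would apply Proposition~\ref{prop:method}.

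\emph{Ramification and genus.} Since $v_Q(f)=-N$ with $\gcd(m,N)=1$, the point $Q$ is totally ramified; call the point above it $P$. Because $m$ is odd, this ramification is tame. Now let $R\ne Q$ be a zero of $f$ of order $e$. If $e$ is odd, then $v_R(df)=e-1$; if $e$ is even, then $v_R(df)\ge e$. Since $\divis_Y(df)$ is supported at $Q$, we must have $e=1$. Hence $f$ has exactly $N$ simple zeros $R_1,\dots,R_N$, each totally and tamely ramified, and $X\to Y$ is unramified elsewhere. Riemann--Hurwitz then gives
\[
2g_X-2=m(2g_Y-2)+(m-1)(N+1),
\]
which is \eqref{eq:transfergenus}.

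\emph{The differential $dz$.} From $mz^{m-1}dz=df$ and $m$ odd, we get $dz=m^{-1}z^{1-m}df$. The pullback of $df$ has divisor
\[
m(2g_Y-2)P+(m-1)P+\sum_i (m-1)R_i,
\]
using the tame different. The function $z^{1-m}$ contributes $-(m-1)\sum_i R_i+(m-1)NP$. Adding these, $\divis_X(dz)=\bigl(m(2g_Y-2)+(m-1)(N+1)\bigr)P=(2g_X-2)P$, which is \eqref{eq:transferdiff}.

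\emph{The semigroup.} Any $F\in k(X)$ can be written as $F=\sum_{j=0}^{m-1}c_jz^j$ with $c_j\in k(Y)$. Suppose $F$ is regular away from $P$.
\begin{itemize}
\item Over a point $Q'\notin\{Q,R_1,\dots,R_N\}$, the discriminant of $T^m-f$ is a unit times $f^{m-1}$, hence a unit. So $1,z,\dots,z^{m-1}$ is a local integral basis, and each $c_j$ is regular at $Q'$.
\item At $R_i$ and at $P$, the valuations $m\,v(c_j)+j$, respectively $m\,v(c_j)-jN$, are pairwise distinct mod $m$. Therefore the valuation of $F$ is the minimum of the valuations of its terms. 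At $R_i$ this forces $v_{R_i}(c_j)\ge0$, and at $P$ the pole orders are realized termwise.
\end{itemize}
Thus each $c_j$ is regular away from $Q$, so $-v_Q(c_j)\in H$, and $-v_P(F)\in mH+N\N$. The reverse inclusion is clear from $v_P(h)=m\,v_Q(h)$ and $v_P(z)=-N$. This gives \eqref{eq:transferH}. I expect this integrality step to be the main obstacle. As a fallback, one can check that the gap count of $mH+jN$ ($0\le j<m$, i.e.\ of $mH+N\N$) equals $g_X$, using the Apéry formula.

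\emph{Counting.} Since $N$ and $m$ are coprime, each element of $mH+N\N$ has a unique representation $mh+jN$ with $h\in H$ and $0\le j\le m-1$. By \eqref{eq:transferdiff}, Proposition~\ref{prop:method} applies to $t=z$. It gives $a(X)=\#\bigl((mH+N\N)\cap[0,g_X-1]\bigr)$. For fixed $j$, the admissible $h$ are those with $h\le\lfloor(g_X-1-jN)/m\rfloor$, and they number $\ell_Y$ of that multiple of $Q$. Summing over $j$ gives \eqref{eq:transfera}.

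For \eqref{eq:transferApery}, write each $h\in H$ uniquely as $h=w+iN$ with $w\in\mathcal A$ and $i\ge0$. Then $mh+jN=mw+N(mi+j)$, and $r=mi+j$ runs bijectively over $\N$. So for each $w$ we count the $r\ge0$ with $mw+Nr\le g_X-1$, and there are $\max\{0,1+\lfloor(g_X-1-mw)/N\rfloor\}$ of them.
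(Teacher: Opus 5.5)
Your proof is correct. The ramification analysis, the computation of $\divis_X(dz)$, and the two final counts match the paper's. You prove \eqref{eq:transferH} by a genuinely different route.

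The paper establishes only the easy inclusion $mH+N\N\subseteq H_X(P)$. It then gets equality by comparing gap numbers: the Ap\'ery set of $mH+N\N$ modulo $N$ is $m\mathcal A$, and the Ap\'ery genus formula reproduces the Riemann--Hurwitz value of $g_X$. This is exactly your proposed fallback. You instead compute the whole ring of functions regular off $P$, namely $\bigoplus_{j<m}\mathcal O_Y(Y\setminus Q)\,z^j$. At the unramified points you use the unit discriminant $f^{m-1}$. At $P$ and at the points over the zeros of $f$ you use that the valuations of the terms $c_jz^j$ are distinct modulo $m$. This step is sound as written, so it is not really an obstacle.

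The paper's route is shorter once the genus is known and needs no local integrality argument. Yours does not depend on the genus formula, and it gives a structural description of the coordinate ring. It is also the one-point analogue of the argument the paper uses for Theorem~\ref{thm:multitransfer}, where there is no semigroup to compare. There, the $\mu_m$-eigenspace decomposition (available because $m$ is odd) replaces your discriminant and valuation-distinctness steps.

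Two minor points. The existence of the representation $mh+jN$ with $0\le j<m$ uses $N\in H$ as well as coprimality. Also, Proposition~\ref{prop:method} is stated for $g_X\ge1$, so the case $g_X=0$ needs a one-line remark that both counts vanish, or a direct appeal to Lemma~\ref{lem:cartier}.
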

\begin{proof}
The differential $df$ is a unit at every point of $Y\setminus\{Q\}$. Consequently, the zeros of $f$ are simple, and there are $N$ of them. These zeros also show that $f$ is not a proper power. The extension has degree $m$, is totally ramified at the $N$ zeros and at $Q$, and is unramified elsewhere. The assertion also covers $m=1$ in the evident way. Riemann--Hurwitz gives \eqref{eq:transfergenus}.

Differentiation gives $dz=z^{1-m}df$. At each finite zero of $f$, the tame different contributes $m-1$ to the pulled-back differential divisor, and $z^{1-m}$ contributes $1-m$. The contributions cancel. Away from these zeros and $P$, both factors are units. Thus $dz$ is supported at $P$, and its degree proves \eqref{eq:transferdiff}.

Pullbacks of functions on $Y$ and the function $z$ show that $H'=mH+N\N$ is contained in $H_X(P)$. Since $N\in H$, every element of $H$ has a unique form $w+cN$, with $w\in\mathcal A$ and $c\ge0$. Since $\gcd(m,N)=1$, the least representatives for $H'$ modulo $N$ are precisely the elements $mw$, $w\in\mathcal A$. Hence
\[
g(H')=\frac{m}{N}\sum_{w\in\mathcal A}w-\frac{N-1}{2}
=m g_Y+\frac{(m-1)(N-1)}2=g_X.
\]
Equality of the genera proves \eqref{eq:transferH}.

Every element of $H'$ also has a unique representation $mv+jN$, with $v\in H$ and $0\le j<m$: reduce $j$ modulo $m$ using $N\in H$, and use $\gcd(m,N)=1$ for uniqueness. Counting the elements up to $g_X-1$ proves \eqref{eq:transfera}. Counting instead by the Ap\'ery representatives $mw$ proves \eqref{eq:transferApery}. For $g_X\ge1$, Proposition~\ref{prop:method} identifies both counts with $a(X)$. For $g_X=0$, both counts are zero, as required.
\end{proof}

\begin{corollary}\label{cor:periodicity}
Fix $Y,Q,f,N$ as in Theorem~\ref{thm:transfer}. For positive odd $m$ coprime to $N$, denote the corresponding $a$-number by $a_m$. Put
\[
A=2g_Y+N-1,\qquad
\mathcal W=\{w\in\operatorname{Ap}(H_Y(Q),N):2w<A\},
\qquad S=\sum_{w\in\mathcal W}(A-2w).
\]
Then
\begin{equation}\label{eq:generalperiod}
a_m=\sum_{w\in\mathcal W}\left\lfloor\frac{m(A-2w)+N-1}{2N}\right\rfloor,
\qquad a_{m+2N}=a_m+S.
\end{equation}
Thus the $a$-number is an affine function of $m$ on each admissible residue class modulo $2N$. If $A>0$, then
\[
\lim_{\substack{m\to\infty\\m\text{ odd},\ (m,N)=1}}
\frac{a_m}{g_X}=\frac{S}{NA}.
\]
\end{corollary}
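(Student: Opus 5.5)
We start from the Apéry form \eqref{eq:transferApery} of Theorem~\ref{thm:transfer} and rewrite its summands in terms of $A$ and $m$. By \eqref{eq:transfergenus},
\[
2(g_X-1)=2mg_Y+(m-1)(N-1)-2=m(2g_Y+N-1)-(N+1)=mA-N-1 .
\]
Hence $g_X-1-mw=\bigl(m(A-2w)-N-1\bigr)/2$. Since $m$ is odd, $mA-N-1=2(g_X-1)$ is even, so $m(A-2w)-N-1$ is even as well, and
\[
1+\left\lfloor\frac{g_X-1-mw}{N}\right\rfloor
=1+\left\lfloor\frac{m(A-2w)-N-1}{2N}\right\rfloor
=\left\lfloor\frac{m(A-2w)+N-1}{2N}\right\rfloor .
\]
This gives the summand in \eqref{eq:generalperiod}, up to the truncation by $\max\{0,\cdot\}$.

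Next we determine which $w\in\operatorname{Ap}(H_Y(Q),N)$ contribute. If $2w\ge A$, then $m(A-2w)\le 0$, so the floor is at most $\lfloor (N-1)/(2N)\rfloor=0$ and the term vanishes after taking $\max\{0,\cdot\}$. Equality $2w=A$ gives exactly $0$, so such $w$ may be excluded from $\mathcal W$ without harm. If $2w<A$, the quantity $m(A-2w)+N-1$ is positive, so the floor is $\ge0$ and the $\max$ is superfluous. This proves the first formula in \eqref{eq:generalperiod}.

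For the periodicity, replacing $m$ by $m+2N$ keeps $m$ odd and coprime to $N$, and it adds $2N(A-2w)$ to each numerator. Since $A-2w$ is an integer, each summand increases by exactly $A-2w$, and summing gives $a_{m+2N}=a_m+S$. Consequently, on each admissible residue class modulo $2N$, $a_m$ is affine in $m$ with slope $S/(2N)$.

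For the limit, each summand equals $m(A-2w)/(2N)+O(1)$, so $a_m=mS/(2N)+O(1)$. Meanwhile \eqref{eq:transfergenus} gives $g_X=m(2g_Y+N-1)/2+O(1)=mA/2+O(1)$. Dividing, and using $A>0$ so that $g_X\to\infty$, yields $a_m/g_X\to S/(NA)$.

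No step is a real obstacle. The only delicate points are the parity check that converts $1+\lfloor\cdot\rfloor$ into a single floor, which uses that $m$ is odd, and the treatment of the boundary case $2w=A$.
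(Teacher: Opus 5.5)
Your proposal is correct and follows the paper's own proof: substitute $g_X-1=(mA-N-1)/2$ into the Ap\'ery form \eqref{eq:transferApery}, note that the terms with $2w\ge A$ vanish, observe that $m\mapsto m+2N$ adds exactly $A-2w$ to each floor, and compare $a_m=mS/(2N)+O(1)$ with $g_X=mA/2+O(1)$. The parity check you single out as delicate is not actually needed: $1+\lfloor y\rfloor=\lfloor y+1\rfloor$ holds for every real $y$, so the conversion to a single floor works regardless of the parity of $m$.
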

\begin{proof}
Substitute $g_X-1=(mA-N-1)/2$ in \eqref{eq:transferApery}. The summand is zero if $A-2w\le0$ and otherwise equals the displayed floor. Increasing $m$ by $2N$ adds $A-2w$ to it. Finally, $a_m=mS/(2N)+O(1)$ and $g_X=mA/2+O(1)$.
\end{proof}

For fixed $H$, $N$, and $m$ satisfying Theorem~\ref{thm:transfer}, the $a$-number is independent of the remaining coefficients. The construction can also be iterated. For example, $f_1=z^{2^v}+z$, with $v\ge1$, satisfies $df_1=dz$, has pole order $2^vN$ at $P$, and is regular elsewhere. A further odd radical degree coprime to $2^vN$ is therefore permitted.

\subsection{A multipoint transfer formula}\label{subsec:multi}
For the Beelen--Montanucci curves we need the following multipoint version.

\begin{theorem}\label{thm:multitransfer}
Let $Y$ be a smooth projective curve in characteristic two, and let $f$ be a nonconstant function with
\[
(f)_\infty=\sum_{i=1}^{s}N_iQ_i,\qquad
\divis_Y(df)=\sum_{i=1}^{s}c_iQ_i,
\]
where the $Q_i$ are distinct, $N_i\ge1$, and the $c_i$ are even integers. Let $m\ge1$ be odd and coprime to every $N_i$, and let $X$ be the smooth projective curve with function field $k(Y)(z)$, where $z^m=f$. There is a unique point $P_i$ above each $Q_i$. Put
\[
b_i=\frac{m c_i+(m-1)(N_i+1)}2.
\]
Then
\begin{equation}\label{eq:multidz}
\divis_X(dz)=2\sum_{i=1}^{s}b_iP_i,
\end{equation}
and
\begin{equation}\label{eq:multitransfer}
a(X)=\sum_{j=0}^{m-1}
\ell_Y\!\left(\sum_{i=1}^{s}
\left\lfloor\frac{b_i-jN_i}{m}\right\rfloor Q_i\right).
\end{equation}
\end{theorem}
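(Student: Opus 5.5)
The proof follows the same route as Theorem~\ref{thm:transfer}: first compute the divisor of $dz$ on $X$, then apply Lemma~\ref{lem:cartier} with $E=\sum b_iP_i$, and finally decompose $L_X(E)$ along the basis $1,z,\dots,z^{m-1}$ of $k(X)$ over $k(Y)$.

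\emph{Ramification and the divisor of $dz$.} Since $df$ is supported on the $Q_i$, $f$ has no multiple zeros off $\{Q_i\}$: at a zero of order $e\ge2$ with $e$ odd, $df$ would vanish to order $e-1>0$; for $e$ even and $f=u^e w$ with $w$ a unit, $df=u^e\,dw$ vanishes to order at least $e$. Hence every zero of $f$ is simple. In particular $f$ is not a $p$-th power for any prime $p\mid m$, so $[k(X):k(Y)]=m$. Since $\gcd(m,N_i)=1$, each $Q_i$ is totally (tamely) ramified, giving a unique $P_i$. Each simple zero $R$ of $f$ is also totally ramified, and the other points are unramified. Now write $dz=m^{-1}z^{1-m}df=z^{1-m}df$ ($m$ odd). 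At a point above a simple zero, the pullback of $df$ has order $m-1$ (the different exponent, since $df$ is a unit downstairs) and $z^{1-m}$ contributes $1-m$, so the order is $0$. At unramified non-$Q_i$ points both factors are units. At $P_i$, $v_{P_i}(z)=-N_i$ and $v_{P_i}(\pi^*df)=mc_i+(m-1)$. Hence
\[
v_{P_i}(dz)=(m-1)N_i+mc_i+m-1=mc_i+(m-1)(N_i+1)=2b_i.
\]
This quantity is even because $c_i$ is even and $m-1$ is even. This proves \eqref{eq:multidz}. Then Lemma~\ref{lem:cartier}, with $t=z$ (separating, since $dz\neq0$), gives $a(X)=\ell_X(\sum_i b_iP_i)$.

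\emph{Decomposition of $L_X(E)$.} Every $u\in k(X)$ is uniquely $u=\sum_{j=0}^{m-1}u_jz^j$ with $u_j\in k(Y)$. The main step is to show that $u\in L_X(E)$ if and only if each $u_jz^j\in L_X(E)$. At $P_i$, $v_{P_i}(u_jz^j)=m\,v_{Q_i}(u_j)-jN_i\equiv -jN_i\pmod m$. These residues are distinct for distinct $j$ because $\gcd(m,N_i)=1$, so the valuation of the sum is the minimum of the valuations of the terms. At a point $R'$ above a simple zero $R$ of $f$, $v_{R'}(u_jz^j)=m\,v_R(u_j)+j$, which again has distinct residues mod $m$. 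At unramified points, the condition is only that $u$ be regular there. Here I would argue that $z$ is a unit at such points, and that $1,z,\dots,z^{m-1}$ is an integral basis locally, because the discriminant $\pm m^m f^{m-1}$ is a unit away from the zeros and poles of $f$. Hence $u$ is regular there if and only if every $u_j$ is regular at the point below. Thus $L_X(E)=\bigoplus_j z^jL_j$, where $u_j\in L_j$ means: $u_j$ is regular away from the $Q_i$, and $m\,v_{Q_i}(u_j)-jN_i\ge -b_i$. The last condition reads $v_{Q_i}(u_j)\ge -\lfloor (b_i-jN_i)/m\rfloor$. At the zeros of $f$ the condition $m\,v_R(u_j)+j\ge0$ is equivalent to $v_R(u_j)\ge0$, since $0\le j<m$. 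So $L_j=L_Y\bigl(\sum_i\lfloor (b_i-jN_i)/m\rfloor Q_i\bigr)$, and summing the dimensions gives \eqref{eq:multitransfer}.

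The main obstacle is the local integral-basis claim at unramified points. If the discriminant argument feels too quick, the fallback is the trace: for $0\le j<m$ one has $\operatorname{Tr}(z^{-j}u)=m\,u_j$, and the trace of a function regular at all points over an unramified point is regular there, which gives the needed regularity of each $u_j$.
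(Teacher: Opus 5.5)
Your proof is correct and follows the paper's structure. The identity $dz=z^{1-m}df$ gives $\divis_X(dz)=2\sum_i b_iP_i$, Lemma~\ref{lem:cartier} reduces the count to $\ell_X(\sum_i b_iP_i)$, and this space splits along the powers $z^j$ into the spaces $L_Y\bigl(\sum_i\lfloor (b_i-jN_i)/m\rfloor Q_i\bigr)$. The only difference is how you justify that splitting: the paper notes that $L_X(\sum_i b_iP_i)$ is stable under the cyclic Galois group and applies the character projectors, which exist because $m$ is odd. That is essentially your trace fallback used at every point at once, and it makes the distinct-residue and discriminant arguments unnecessary.
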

\begin{proof}
Since $df$ is a unit outside the $Q_i$, all zeros of $f$ are simple. In particular $f$ is not a proper power, so the extension has degree $m$. It is totally and tamely ramified at these zeros and at the $Q_i$, and is unramified elsewhere. At a zero of $f$, the identity $dz=z^{1-m}df$ cancels the tame different contribution. At $P_i$, its order is
\[
m c_i+(m-1)+(m-1)N_i=2b_i,
\]
which proves \eqref{eq:multidz}. Lemma~\ref{lem:cartier} gives $a(X)=\ell_X(\sum b_iP_i)$.

This Riemann--Roch space is invariant under the cyclic group of the cover. Since $m$ is odd, its character projectors are defined, and it is the direct sum of its character spaces. A rational function in the $j$th character space is $h z^j$, where $h\in k(Y)$ and $0\le j<m$. At a point above a simple zero of $f$, its valuation is $m v(h)+j$, so regularity is equivalent to $v(h)\ge0$. The same requirement holds at all other points outside the $Q_i$. At $P_i$ the required inequality is
\[
m v_{Q_i}(h)-jN_i\ge-b_i,
\]
or $v_{Q_i}(h)\ge-\lfloor(b_i-jN_i)/m\rfloor$. Thus the $j$th character space has the dimension appearing in \eqref{eq:multitransfer}. Summing proves the formula. For $m=1$ the same argument reduces directly to Lemma~\ref{lem:cartier}.
\end{proof}

\subsection{Preservation under covers unramified away from one point}
\begin{proposition}\label{prop:wildpreserve}
Suppose $\divis_Y(dt)=(2g_Y-2)Q$. Let $\pi:X\to Y$ be a finite separable morphism unramified over $Y\setminus\{Q\}$, with a unique point $P$ above $Q$. Then
\[
\divis_X(d\pi^*t)=(2g_X-2)P,
\qquad a(X)=\#\bigl(H_X(P)\cap[0,g_X-1]\bigr).
\]
\end{proposition}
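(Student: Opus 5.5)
The plan is to compute the differential divisor of $d\pi^*t$ on $X$ directly from the pullback formula for differentials, and then apply Proposition~\ref{prop:method}. For a finite separable morphism, pullback of a rational differential satisfies
\[
\divis_X(\pi^*\omega)=\pi^*\divis_Y(\omega)+R,
\]
where $R$ is the ramification (different) divisor. This holds also for wild ramification, as the Dedekind different formula. Apply it with $\omega=dt$. Note that $d(\pi^*t)=\pi^*(dt)$, and $\pi^*t$ is separating because $\pi$ is separable.

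\textbf{Step 1 (support).} Over $Y\setminus\{Q\}$ the morphism is unramified, so $R$ is supported on $\pi^{-1}(Q)=\{P\}$. Also $\divis_Y(dt)=(2g_Y-2)Q$ is supported at $Q$, and the unique point above $Q$ is $P$. Hence
\[
\divis_X(d\pi^*t)=cP
\]
for a single integer $c$.

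\textbf{Step 2 (the coefficient).} The degree of a canonical divisor on $X$ is $2g_X-2$. So $c=2g_X-2$, and no explicit knowledge of the wild different exponent at $P$ is needed. This degree argument is the key point: it bypasses the only potentially hard computation, namely the higher ramification groups at $P$.

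\textbf{Step 3 (conclusion).} If $g_X\ge1$, Proposition~\ref{prop:method} applied to $X$, $P$ and the separating function $\pi^*t$ gives
\[
a(X)=\ell_X((g_X-1)P)=\#\bigl(H_X(P)\cap[0,g_X-1]\bigr).
\]
If $g_X=0$, both sides vanish: the interval $[0,-1]$ is empty and $a=0$. Alternatively, apply Lemma~\ref{lem:cartier} with $E=(g_X-1)P$ directly.

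The only point requiring care is the validity of the different formula for wildly ramified points. This is standard: the order of $\pi^*dt$ at $P$ equals $e_P\,v_Q(dt)+d_P$, where $d_P$ is the different exponent, for any separable extension of local fields with perfect residue field. I expect no real obstacle, since Step 2 makes the value of $d_P$ irrelevant.
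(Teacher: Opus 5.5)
Your proposal is correct and follows essentially the same argument as the paper. You use the pullback-plus-different formula to show the divisor of $d\pi^*t$ is supported at $P$, fix its coefficient as $2g_X-2$ by degree (with separability ensuring the differential is nonzero), and then apply Proposition~\ref{prop:method} for $g_X\ge1$, treating genus zero separately.
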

\begin{proof}
The differential divisor on $X$ is the pullback of the differential divisor on $Y$ plus the different divisor. Both are supported at $P$. Their total degree is $2g_X-2$, proving the first assertion. The pullback differential is nonzero because $\pi$ is separable. For $g_X\ge1$, Proposition~\ref{prop:method} gives the second assertion; the genus-zero case is immediate.
\end{proof}
The cover may be wildly ramified. Its pole semigroup is a separate input.

\subsection{An effective counting formula}
For any numerical semigroup $H$ and positive $e\in H$, let $w_0,\ldots,w_{e-1}$ be its Ap\'ery representatives modulo $e$. For every integer $B$,
\begin{equation}\label{eq:algorithm}
\#\{h\in H:h\le B\}
=\sum_{i=0}^{e-1}\max\left\{0,\,1+\left\lfloor\frac{B-w_i}{e}\right\rfloor\right\}.
\end{equation}
Indeed, each element is uniquely $w_i+ce$, with $c\ge0$. Under the hypothesis of Proposition~\ref{prop:method}, this gives the $a$-number directly from the Ap\'ery set.

\subsection{Comparison with cyclic covers of the projective line}\label{subsec:cycliccomparison}
For a connected tame cyclic cover of $\mathbb P^1$ of odd degree $K>1$, let $e_1,\ldots,e_s$ be its nonzero Kummer exponents modulo $K$, including the exponent at infinity if it is ramified. Set
\[
d_i=-1+\sum_{j=1}^s\left\{\frac{i e_j}{K}\right\},
\qquad \sigma(i)\equiv2^{-1}i\pmod K,
\qquad 1\le i,\sigma(i)<K,
\]
where braces denote fractional part. These are the character-space dimensions, with $g=\sum_{i=1}^{K-1}d_i$. Elkin's characteristic-two argument \cite[proof of Corollary~1.4]{Elkin} gives
\begin{equation}\label{eq:Elkinrank}
\operatorname{rank}\Cart=\sum_{i=1}^{K-1}\min\{d_i,d_{\sigma(i)}\},
\qquad
a=g-\sum_{i=1}^{K-1}\min\{d_i,d_{\sigma(i)}\}.
\end{equation}

For the GGS model \eqref{eq:GGSmodel}, put $m=d_n$, $K=m(q+1)=q^n+1$, and
\[
B(x)=x^q+x,\qquad H(x)=B(x)^{q-1}+1.
\]
Since $y^{q^2}+y=yH(x)$, elimination of $y$ gives
\begin{equation}\label{eq:GGSKummer}
z^K=B(x)H(x)^{q+1},\qquad y=\frac{z^m}{H(x)}.
\end{equation}
Thus this equation has the full function field of $\mathcal G_{q,n}$. The polynomial $B$ has $q$ simple roots. At a root of $H$, one has $B\ne0$ and $H'=B^{q-2}\ne0$, so $H$ has $q^2-q$ simple roots disjoint from those of $B$. The right side of \eqref{eq:GGSKummer} has degree $q^3$. Consequently its Kummer exponents and their multiplicities are
\begin{equation}\label{eq:GGSbranchdata}
(1,q),\qquad(q+1,q^2-q),\qquad(-q^3\bmod K,1).
\end{equation}
The simple zeros with exponent one show that the extension has degree $K$. The same elimination applies to $C_{q,d}$ in Section~5, with $K=d(q+1)$; any exponents congruent to zero are then omitted from the branch list.

For the Beelen--Montanucci model \eqref{eq:BMmodel}, elimination gives instead
\begin{equation}\label{eq:BMKummer}
z^K=\frac{(x^{q^2}+x)^{q+1}}{(x^{q+1}+1)^q},
\qquad y=\frac{z^m(x^{q+1}+1)}{x^{q^2}+x}.
\end{equation}
Every root of $x^{q+1}+1$ is a root of $x^{q^2}+x$: from $x^{q+1}=1$ one gets $x^{q^2-1}=1$. Both polynomials have simple roots. At their $q+1$ common roots the net exponent is one; at the other $q^2-q-1$ roots of the numerator it is $q+1$. The pole order of the right side at infinity is $q^3-q$. Hence the branch data are
\begin{equation}\label{eq:BMbranchdata}
(1,q+1),\qquad(q+1,q^2-q-1),\qquad(-(q^3-q)\bmod K,1).
\end{equation}
As before, the exponent one guarantees degree $K$, and the expression for $y$ proves equality of function fields.

Substituting these branch data in \eqref{eq:Elkinrank} gives finite sums for the invariants in Theorems~\ref{thm:GGS} and~\ref{thm:BM}. Below we evaluate the same invariants using Riemann--Roch spaces on the Hermitian curve.

\section{The Suzuki curve}\label{sec:Suzuki}
We record the semigroup calculation of Friedlander et al.\ \cite[Theorem 4.1]{FGMPW}, retaining the small-degree dimensions needed for the Skabelund cover.

Let $r=2^s$, $q=2r^2$, and ${\Delta_S}=q+2r+1$. The Suzuki curve is
\[
S_q:\quad y^q+y=x^r(x^q+x).
\]
It has a unique point $Q_\infty$ at infinity, and the required geometric data are
\begin{align}
g_S:=g(S_q)&=r(q-1), &
\divis_{S_q}(dx)&=(2g_S-2)Q_\infty,\label{eq:suzukican}\\
H_S:=H_{S_q}(Q_\infty)&=\langle q,q+r,q+2r,{\Delta_S}\rangle.\label{eq:suzukisg}
\end{align}
These facts are recalled in \cite[Lemmas~3.4 and 3.5 and Proposition~3.6]{FGMPW}; the semigroup is due to Hansen and Stichtenoth \cite[Proposition~1.5]{HS}. In \cite{FGMPW} the curve is written $z^q+z=y^{r}(y^q+y)$, so their functions $y,z,h_1,h_2$ correspond to our $x,y,z,w$ in Section~\ref{sec:Skabexamples}. The canonical divisor in \eqref{eq:suzukican} also follows from Proposition~\ref{prop:wildpreserve}, applied to $x:S_q\to\mathbb P^1$ and $\divis_{\mathbb P^1}(dx)=-2\infty$: the affine model is \'etale over the $x$-line, since the derivative of $y^q+y$ with respect to $y$ is $1$. Set
\[
\mathcal L_S(n)=\ell_{S_q}(n\Delta_S Q_\infty)\quad(n\in\mathbb Z),
\]
where $\mathcal L_S(n)=0$ for $n<0$. The next normal form follows from the semigroup relations; its use in the $a$-number count is the same as in \cite[Proposition 3.7 and proof of Theorem 4.1]{FGMPW}.

\begin{lemma}\label{lem:normal}
Every element of $H_S$ has a unique expression
\begin{equation}\label{eq:normal}
\alpha q+\beta(q+r)+\gamma(q+2r)+\delta {\Delta_S},
\qquad \alpha\ge0,\quad 0\le \beta\le1,\quad 0\le \gamma,\delta<r.
\end{equation}
\end{lemma}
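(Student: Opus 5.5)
The plan is to prove existence and uniqueness separately, with uniqueness reduced to a counting argument via Apéry sets modulo $q$. Write $r$ for $q_0$, so the generators are $q$, $q+r$, $q+2r$, $\Delta_S=q+2r+1$, and $q=2r^2$.

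For existence, every element of $H_S$ is a nonnegative combination $\alpha q+\beta(q+r)+\gamma(q+2r)+\delta\Delta_S$. We reduce the coefficients using three relations among the generators:
\[
2(q+r)=q+(q+2r),\qquad r(q+2r)=(r+1)q+r\cdot 0,\qquad r\Delta_S=(q+r)+(r-1)\ldots
\]
These must be checked against $q=2r^2$. One has $r(q+2r)=rq+2r^2=(r+1)q$, which is a genuine relation. Also $r\Delta_S=rq+q+r=(r)q+(q+r)$, that is,
\[
r\Delta_S=rq+(q+r).
\]
The reduction then runs in order. First replace $r\Delta_S$ by $rq+(q+r)$ until $\delta<r$. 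Next replace $2(q+r)$ by $q+(q+2r)$ until $\beta\le1$. Finally replace $r(q+2r)$ by $(r+1)q$ until $\gamma<r$. The second step raises $\gamma$ and the third lowers it, so we must confirm the process terminates. Each replacement strictly increases $\alpha$, while the total value stays fixed and $\alpha q$ is bounded by that value. Hence the process stops, and it stops at a normal form.

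For uniqueness, consider the elements of the form \eqref{eq:normal} with $\alpha=0$. There are $2r^2=q$ of them, indexed by $(\beta,\gamma,\delta)$. Their values are
\[
\beta(q+r)+\gamma(q+2r)+\delta\Delta_S\equiv \beta r+2\gamma r+\delta(2r+1)\pmod q.
\]
We show these $q$ residues are pairwise distinct. Reduce modulo $r$: the residue is congruent to $\delta$, and since $0\le\delta<r$, this determines $\delta$. Subtracting $\delta(2r+1)$ leaves $r(\beta+2\gamma+2\delta)$ modulo $2r^2$, which amounts to $\beta+2\gamma+2\delta$ modulo $2r$. With $\delta$ known, this determines $\beta+2\gamma$ modulo $2r$. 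Because $0\le\beta+2\gamma\le 2r-1$, it then determines $\beta$ and $\gamma$.

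Thus the $\alpha=0$ elements represent every residue class modulo $q$ exactly once. Two normal forms with the same value must have the same residue, so their $(\beta,\gamma,\delta)$ agree, and consequently their $\alpha$ agree as well.

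The main obstacle is the termination and bookkeeping in the existence step. Under the residue argument alone, existence only shows that each element is $\alpha q$ plus some representative; it does not by itself force $\alpha\ge0$. The rewriting argument above supplies exactly this, since it starts with nonnegative coefficients and only increases $\alpha$.
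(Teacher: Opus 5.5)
Your proof is correct and follows essentially the paper's route: the same three relations $r\Delta_S=rq+(q+r)$, $2(q+r)=q+(q+2r)$, $r(q+2r)=(r+1)q$ applied in the same order for existence, and the same successive reductions modulo $r$, $2r$ and $q$ for uniqueness, which you phrase as injectivity of the residues of the $q$ forms with $\alpha=0$ (together with existence, this also identifies those forms as the Ap\'ery set of $H_S$ modulo $q$). One harmless slip: subtracting $\delta(2r+1)$ leaves $r(\beta+2\gamma)$, and $r(\beta+2\gamma+2\delta)$ is what remains after subtracting only $\delta$; in either case $\beta+2\gamma$ is determined modulo $2r$, as you conclude.
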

\begin{proof}
Existence follows from the numerical relations
\[
r\Delta_S=rq+(q+r),\qquad
2(q+r)=q+(q+2r),\qquad
r(q+2r)=(r+1)q.
\]
First reduce the coefficient of ${\Delta_S}$ modulo $r$, then that of $q+r$ modulo $2$, and finally that of $q+2r$ modulo $r$. All new coefficients remain nonnegative.

For uniqueness, suppose two expressions in \eqref{eq:normal} agree. Reduction modulo $r$ gives $\delta=\delta'$, since ${\Delta_S}\equiv1\pmod r$. After canceling these terms, reduction modulo $2r$ gives $\beta=\beta'$. After canceling again, reduction modulo $q=2r^2$ gives $2r(\gamma-\gamma')\equiv0\pmod q$, so $\gamma=\gamma'$. Finally $\alpha=\alpha'$.
\end{proof}

\begin{lemma}\label{lem:small}
For $0\le n\le r-1$,
\begin{equation}\label{eq:small}
\mathcal L_S(n)=\binom{n+3}{3}+\binom{n+2}{3}
=\frac{(n+1)(n+2)(2n+3)}6.
\end{equation}
\end{lemma}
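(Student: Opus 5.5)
We need to count the elements of $H_S$ that are at most $n\Delta_S$, for $0\le n\le r-1$. By Lemma~\ref{lem:normal}, each such element has the unique normal form $\alpha q+\beta(q+r)+\gamma(q+2r)+\delta\Delta_S$. The plan is to show that, when $n<r$, the condition $h\le n\Delta_S$ is equivalent to the simple condition $\alpha+\beta+\gamma+\delta\le n$. Counting solutions of that condition then gives the binomial expression.

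Write $\Delta_S=q+2r+1$ and $\sigma=\alpha+\beta+\gamma+\delta$. In the normal form,
\[
h=\sigma q+r(\beta+2\gamma+2\delta)+\delta .
\]
The perturbation term $r(\beta+2\gamma+2\delta)+\delta$ is bounded above by roughly $r(1+4(r-1))+r$, which is less than $2r^2=q$. Hence $\sigma q\le h<(\sigma+1)q$.

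\emph{First direction.} Suppose $\sigma\le n$. We need $h\le n\Delta_S=nq+2nr+n$. Each generator is at most $\Delta_S$, so $h\le\sigma\Delta_S\le n\Delta_S$.

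\emph{Second direction.} Suppose $\sigma\ge n+1$. Then $h\ge\sigma q\ge(n+1)q$. Since $n\le r-1$, we have
\[
n\Delta_S=nq+n(2r+1)\le nq+(r-1)(2r+1)<nq+2r^2=(n+1)q .
\]
So $h>n\Delta_S$. This is the only step where the hypothesis $n\le r-1$ enters.

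It remains to count tuples with $\sigma\le n$, where $\beta\in\{0,1\}$ and $\alpha\ge0$. Because $n\le r-1$, the constraints $\gamma,\delta<r$ are automatic once $\gamma+\delta\le n$.

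\emph{Case $\beta=0$.} We count nonnegative $(\alpha,\gamma,\delta)$ with sum at most $n$. There are $\binom{n+3}{3}$ of these.

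\emph{Case $\beta=1$.} The sum $\alpha+\gamma+\delta$ must be at most $n-1$. There are $\binom{n+2}{3}$ of these, which is zero when $n=0$.

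Adding the two cases gives the first expression in \eqref{eq:small}. The closed form follows by factoring out $(n+1)(n+2)/6$, which leaves $(n+3)+n=2n+3$.

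Uniqueness of the normal form from Lemma~\ref{lem:normal} ensures that distinct tuples give distinct semigroup elements. So the count of tuples equals $\#(H_S\cap[0,n\Delta_S])=\mathcal L_S(n)$.
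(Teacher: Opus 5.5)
Your proof is correct and matches the paper's argument: all four generators lie between $q$ and $\Delta_S$, and $n(2r+1)\le(r-1)(2r+1)<q$ gives $n\Delta_S<(n+1)q$. Hence an element lies below $n\Delta_S$ exactly when $\alpha+\beta+\gamma+\delta\le n$, and you then count by splitting on $\beta$, using uniqueness of the normal form.

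You should delete one stray remark, the claimed bound $h<(\sigma+1)q$. It is false in general. Your own bound evaluates to $4r^2-2r\ge q$, and the perturbation term can reach $4r^2-2r-1>q$. For example, with $r=2$ one has $10+12+13=35>32$ at $\sigma=3$. The rest of the argument only uses $\sigma q\le h\le\sigma\Delta_S$, so it is unaffected.
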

\begin{proof}
In this range,
\[
n({\Delta_S}-q)=n(2r+1)\le(r-1)(2r+1)<q.
\]
Thus $(n+1)q>n\Delta_S$. If an expression in \eqref{eq:normal} has $\alpha+\beta+\gamma+\delta\ge n+1$, its value exceeds $n\Delta_S$, since all four generators are at least $q$. Conversely, if $\alpha+\beta+\gamma+\delta\le n$, its value is at most $n\Delta_S$, since all four generators are at most ${\Delta_S}$. Therefore $\mathcal L_S(n)$ is the number of tuples with
\[
\alpha+\beta+\gamma+\delta\le n,\qquad \alpha,\gamma,\delta\ge0,\qquad \beta\in\{0,1\}.
\]
The bounds $\gamma,\delta<r$ are automatic. For $\beta=0$ the number is $\binom{n+3}{3}$, and for $\beta=1$ it is $\binom{n+2}{3}$, with the latter understood as zero when $n=0$. Lemma~\ref{lem:normal} ensures that no pole order is counted twice.
\end{proof}

\begin{proposition}[Friedlander--Garton--Malmskog--Pries--Weir]\label{prop:SuzukiKnown}
For $q=2r^2$ with $r=2^s$ and $s\ge1$,
\[
a(S_q)=\frac{r(r+1)(2r+1)}6.
\]
\end{proposition}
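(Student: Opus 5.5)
We apply Proposition~\ref{prop:method} to $Y=S_q$, $P=Q_\infty$ and $t=x$. The hypothesis $\divis_{S_q}(dx)=(2g_S-2)Q_\infty$ is exactly \eqref{eq:suzukican}, and $g_S=r(q-1)\ge1$ since $s\ge1$. Hence
\[
a(S_q)=\ell_{S_q}((g_S-1)Q_\infty)=\#\bigl(H_S\cap[0,g_S-1]\bigr).
\]
What remains is to count the nongaps up to $g_S-1$.

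The key observation is that $g_S-1$ is an exact multiple of $\Delta_S$. Using $q=2r^2$ and $\Delta_S=q+2r+1$, we have
\[
g_S-1=2r^3-r-1
\]
and
\[
(r-1)\Delta_S=(r-1)(2r^2+2r+1)=2r^3-r-1 .
\]
So $g_S-1=(r-1)\Delta_S$, and therefore $a(S_q)=\mathcal L_S(r-1)$.

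Since $n=r-1$ lies in the range $0\le n\le r-1$ of Lemma~\ref{lem:small}, that lemma applies with $n=r-1$ and gives
\[
\mathcal L_S(r-1)=\frac{r(r+1)(2r+1)}{6},
\]
which is the asserted value.

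The only step that needs care is the identity $g_S-1=(r-1)\Delta_S$. It is what lets us use the clean count of Lemma~\ref{lem:small} at the boundary case $n=r-1$. That lemma's proof uses the strict inequality $(r-1)(2r+1)<q$, which holds because $2r^2-r-1<2r^2$. All the substantive semigroup work, namely the normal form in Lemma~\ref{lem:normal} and the tuple count, is already contained in the earlier lemmas.
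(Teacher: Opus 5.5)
Your proposal is correct and follows the paper's proof exactly: you apply Proposition~\ref{prop:method} to $dx$ at $Q_\infty$ and use the identity $g_S-1=(r-1)\Delta_S$ to reduce to $\mathcal L_S(r-1)$. That value is then given by Lemma~\ref{lem:small} as $r(r+1)(2r+1)/6$.
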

\begin{proof}
We have $g_S-1=(r-1){\Delta_S}$. Proposition~\ref{prop:method} applied to $dx$ and Lemma~\ref{lem:small} give
\[
a(S_q)=\ell_{S_q}((g_S-1)Q_\infty)=\mathcal L_S(r-1)
=\binom{r+2}{3}+\binom{r+1}{3}
=\frac{r(r+1)(2r+1)}6.
\]
\end{proof}

\section{Two-generator semigroups and additive families}
We shall also use the following calculation for additive covers.

\begin{proposition}\label{prop:additive}
Let $q=2^h$, $h\ge1$, and let $F(x)\in k[x]$ have odd degree $M\ge1$. The smooth projective curve
\[
Y_F:\quad y^q+y=F(x)
\]
has a unique point $P$ at infinity, genus $g=(q-1)(M-1)/2$, and semigroup $H_Y(P)=\langle q,M\rangle$. Moreover,
\begin{equation}\label{eq:additivea}
a(Y_F)=\sum_{j=0}^{q/2-1}
\left\lfloor\frac{(2j+1)M+q-1}{2q}\right\rfloor.
\end{equation}
In particular, the $a$-number depends only on $q$ and $M$.
\end{proposition}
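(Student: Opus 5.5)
The plan is to realize $Y_F$ as a cover of the projective line unramified away from infinity, read off its canonical divisor from $dx$ via Proposition~\ref{prop:wildpreserve}, and then count semigroup elements with the Ap\'ery formula \eqref{eq:algorithm}.

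\emph{Geometry of the cover.} The affine model is smooth, since the partial derivative in $y$ is $1$. Hence $x:Y_F\to\mathbb P^1$ is \'etale over the affine line. Above infinity I will use the standard Artin--Schreier valuation argument. Since $M$ is odd, it is coprime to $q$. So $v_P(y)=-M$ and $v_P(x)=-q$ at a point $P$ above $\infty$. This forces total ramification, hence a unique point $P$. The genus $(q-1)(M-1)/2$ follows from the standard Artin--Schreier genus formula for $y^q+y=F$ with $\deg F$ odd, which can be quoted. It also follows once the semigroup is known, by comparing with $g(\langle q,M\rangle)=(q-1)(M-1)/2$.

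\emph{Semigroup.} The functions $x$ and $y$ are regular away from $P$ with pole orders $q$ and $M$. Therefore $\langle q,M\rangle\subseteq H_Y(P)$. Both sets have exactly $(q-1)(M-1)/2$ gaps, so they are equal.

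\emph{Canonical divisor.} Apply Proposition~\ref{prop:wildpreserve} with $t=x$, $Y=\mathbb P^1$ and $Q=\infty$. Here $\divis(dx)=-2\infty=(2\cdot0-2)\infty$. This gives $\divis_{Y_F}(dx)=(2g-2)P$, so
\[
a(Y_F)=\#\bigl(\langle q,M\rangle\cap[0,g-1]\bigr).
\]

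\emph{Counting.} Use \eqref{eq:algorithm} with $e=q$. The Ap\'ery set of $\langle q,M\rangle$ modulo $q$ is $\{iM:0\le i\le q-1\}$. Put $B=g-1=\bigl((q-1)(M-1)-2\bigr)/2$. Then the count is
\[
\sum_{i=0}^{q-1}\max\Bigl\{0,\,1+\Bigl\lfloor\frac{B-iM}{q}\Bigr\rfloor\Bigr\}.
\]
For $i\ge q/2$, one has $iM\ge qM/2>B$, so these terms vanish. Set $i=q/2-1-j$ with $0\le j\le q/2-1$. Then
\[
B-iM+q=\frac{(2j+1)M+q-1}{2}-1.
\]
So the $i$th term equals $\lfloor((2j+1)M+q-3)/(2q)\rfloor$, and it is nonnegative automatically. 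It remains to compare this with the claimed $\lfloor((2j+1)M+q-1)/(2q)\rfloor$. The numerators differ by $2$ and both are even, because $(2j+1)M$ and $q-1$ are odd. The floors could differ only if $(2j+1)M+q-1\equiv0\pmod{2q}$. That would need $(2j+1)M\equiv1\pmod q$ together with the correct parity modulo $2q$.

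This residue bookkeeping is the main obstacle. I expect to resolve it by rechecking the normalization: using $\lfloor (x-2)/(2q)\rfloor$ versus $\lfloor x/(2q)\rfloor$ with $x$ even, the two agree unless $x\equiv 0 \pmod{2q}$. That case I will examine directly, possibly re-indexing, or noting that \eqref{eq:additivea} arises from $B+q$ written as $(\ldots+q-1)/2$ after correcting an off-by-one in $B$. If a genuine discrepancy remains, the proof should state the formula in whichever form the Ap\'ery count yields. The final remark then follows at once: the count uses only $q$ and $M$.
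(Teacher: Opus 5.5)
Your route is the paper's own: pole orders $q,M$ for $x,y$, the semigroup obtained by comparing gap counts with the genus, $\divis(dx)=(2g-2)P$ from Proposition~\ref{prop:wildpreserve}, and the Ap\'ery count \eqref{eq:algorithm} with representatives $iM$. The gap is the last step. You leave it unresolved, and the reason is an arithmetic slip rather than any genuine residue issue. With $B=g-1=\bigl((q-1)M-q-1\bigr)/2$ and $i=q/2-1-j$, one has $2iM=(q-2-2j)M$, so
\[
2(B-iM+q)=(q-1)M-q-1-(q-2-2j)M+2q=(2j+1)M+q-1 .
\]
Thus $B-iM+q=\bigl((2j+1)M+q-1\bigr)/2$, with no extra $-1$. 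The $i$th summand is therefore $\max\{0,\lfloor(B-iM+q)/q\rfloor\}=\lfloor((2j+1)M+q-1)/(2q)\rfloor$, which is already nonnegative. This is \eqref{eq:additivea} term by term, exactly as in the paper, which writes the summand as $\lfloor((q-1-2b)M+q-1)/(2q)\rfloor$ and reverses the order.

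Your fallback of stating ``whichever form the Ap\'ery count yields'' would have produced a false formula, because the two floors you compare really do differ. For $q=2$, $M=3$, $j=0$, your expression gives $\lfloor 2/4\rfloor=0$. But $y^2+y=x^3$ has $g=1$ and $a=\#(\langle 2,3\rangle\cap[0,0])=1$, which is the value \eqref{eq:additivea} gives.

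There are two smaller points.

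First, your remark that the genus ``also follows once the semigroup is known'' is circular. You identify $H_Y(P)$ with $\langle q,M\rangle$ precisely by comparing gap numbers with $g$, so the genus must be established first. The paper gets it from the conductor--different formula: each nontrivial character has conductor $M+1$, giving different exponent $(q-1)(M+1)$ at infinity.

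Second, run the ramification step in the right order. Let $e$ be the ramification index at a point above infinity and $b$ the pole order of $y$ there. Comparing pole orders in $y^q+y=F(x)$ gives $qb=eM$, so $q\mid e$, hence $e=q$ and $b=M$. You instead assert $v_P(x)=-q$ before knowing $e=q$. The same comparison also shows that $y^q+y-F(x)$ is irreducible over $k(x)$, i.e.\ that $Y_F$ is connected, which your proposal does not address.
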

\begin{proof}
If $M=1$, the curve is rational and both sides of \eqref{eq:additivea} are zero. Assume henceforth that $M\ge3$. The polynomial is irreducible over $k(x)$. Indeed, for a valuation at infinity in an extension generated by a root, let $e$ be the ramification index and let the root have pole order $b$. Comparing pole orders gives $qb=eM$. Since $\gcd(q,M)=1$, one has $q\mid e$, so the extension has degree $q$, is totally ramified at infinity, and has only one point above infinity. The pole orders of $x,y$ are $q,M$.

The cover of the $x$-line is Galois with additive group $\F_q$. Each nontrivial quadratic character gives an Artin--Schreier quotient with right side $cF(x)$ for some $c\ne0$, of odd pole order $M$. Its conductor is $M+1$. The conductor--different formula gives different exponent $(q-1)(M+1)$ at infinity. Riemann--Hurwitz therefore gives the asserted genus. The semigroup $\langle q,M\rangle$ is contained in $H_Y(P)$ and has the same genus, so they are equal. The affine projection to the $x$-line is \'{e}tale, since the derivative of the defining polynomial with respect to $y$ is $1$. Hence $\divis(dx)=(2g-2)P$, as also follows from Proposition~\ref{prop:wildpreserve}.

The Ap\'ery representatives modulo $q$ are $bM$, $0\le b<q$. By \eqref{eq:algorithm},
\[
a(Y_F)=\sum_{b=0}^{q-1}
\max\left\{0,\left\lfloor\frac{(q-1-2b)M+q-1}{2q}\right\rfloor\right\}.
\]
For $b\ge q/2$ the floor is nonpositive, and for $b<q/2$ it is nonnegative. Reversing the order of the latter values proves \eqref{eq:additivea}.
\end{proof}

\begin{corollary}\label{cor:additiveprogressions}
If $M=q\lambda+\varepsilon$, where $\lambda\ge1$ and $\varepsilon\in\{-1,1\}$, then
\[
a(Y_F)=\frac{q^2\lambda}{8}+
\begin{cases}
0,&\lambda\text{ even},\\
\varepsilon q/4,&\lambda\text{ odd}.
\end{cases}
\]
More generally, replacing $M$ by $M+2q$ increases the $a$-number by $q^2/4$.
\end{corollary}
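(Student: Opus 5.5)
We start from formula \eqref{eq:additivea} of Proposition~\ref{prop:additive}:
\[
a(Y_F)=\sum_{j=0}^{q/2-1}\left\lfloor\frac{(2j+1)M+q-1}{2q}\right\rfloor .
\]
The second assertion is immediate from it. Replacing $M$ by $M+2q$ adds $(2j+1)2q/(2q)=2j+1$ inside each floor, an integer. The total increase is therefore $\sum_{j=0}^{q/2-1}(2j+1)=(q/2)^2=q^2/4$.

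For the closed form, substitute $M=q\lambda+\varepsilon$. The $j$th numerator becomes
\[
(2j+1)q\lambda+(2j+1)\varepsilon+q-1,
\]
so the $j$th term is
\[
\left\lfloor\frac{(2j+1)\lambda}{2}+\frac{(2j+1)\varepsilon+q-1}{2q}\right\rfloor .
\]
Since $0\le 2j+1\le q-1$, the correction $\theta_j=\bigl((2j+1)\varepsilon+q-1\bigr)/(2q)$ lies in $[0,1)$.

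\emph{Case $\lambda$ even.} The first part is an integer, so each term equals $(2j+1)\lambda/2$. Summing gives $\frac{\lambda}{2}\cdot\frac{q^2}{4}=q^2\lambda/8$.

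\emph{Case $\lambda$ odd.} Now $(2j+1)\lambda/2$ is a half-integer. The term equals $\bigl((2j+1)\lambda-1\bigr)/2+\lfloor \tfrac12+\theta_j\rfloor$, and the last floor is $1$ exactly when $\theta_j\ge 1/2$, i.e.\ when $(2j+1)\varepsilon\ge 1$.
\begin{itemize}
\item For $\varepsilon=1$ this holds for all $j$. The sum is $\sum_j\bigl((2j+1)\lambda+1\bigr)/2=q^2\lambda/8+q/4$.
\item For $\varepsilon=-1$ it never holds, giving $q^2\lambda/8-q/4$.
\end{itemize}

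The only delicate point is this bookkeeping of the fractional part $\theta_j$. It must be checked to stay in $[0,1)$, and to cross $1/2$ uniformly in $j$ according to the sign of $\varepsilon$. Both follow from $1\le 2j+1\le q-1$. One should also note that $M$ odd forces $\lambda\ge1$ compatible with $q$ even, and that the degenerate case $q=2$, $M=1$ gives zero on both sides.
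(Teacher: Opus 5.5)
Your proof is correct and follows the paper's argument. You substitute $M=q\lambda+\varepsilon$ into \eqref{eq:additivea} and evaluate each floor according to the parity of $\lambda$; your $\theta_j$ bookkeeping gives exactly the paper's values $(2j+1)u$, $(2j+1)u+j+1$, $(2j+1)u+j$ for $\lambda=2u$, $2u+1$. You also derive the $M\mapsto M+2q$ assertion, as the paper does, from the fact that the $j$th floor increases by $2j+1$.
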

\begin{proof}
Substitute $M=q\lambda+\varepsilon$ in \eqref{eq:additivea} and distinguish even and odd $\lambda$. If $\lambda=2u$, each floor is $(2j+1)u$. If $\lambda=2u+1$, it is $(2j+1)u+j+1$ for $\varepsilon=1$, and $(2j+1)u+j$ for $\varepsilon=-1$. Summing proves the first assertion. The second follows because the change in the $j$th floor is $2j+1$.
\end{proof}

\begin{remark}\label{rem:additivechecks}
For $q=2$, \eqref{eq:additivea} reduces to $a(Y_F)=\lfloor(M+1)/4\rfloor$; for $M\equiv1\pmod4$ this is $(M-1)/4$, as in \cite[Remark~3.2]{FGMPW}. More generally, Elkin and Pries \cite{EP} showed that the $2$-torsion group scheme of a hyperelliptic curve in characteristic two depends only on its ramification invariants. For $F(x)=x^{q+1}$, the curve $Y_F$ is the Hermitian curve $\mathcal H_q$, maximal over $\F_{q^2}$ (with $x$ and $y$ interchanged relative to \eqref{eq:cyclicHerm} below), and Corollary~\ref{cor:additiveprogressions} with $\lambda=\varepsilon=1$ recovers the known formula \cite{Gross,MS}
\begin{equation}\label{eq:Hermitian-a}
a(\mathcal H_q)=\frac{q(q+2)}8.
\end{equation}
\end{remark}

For fixed $q$ and odd $M\to\infty$, the floor formula gives
\[
\lim\frac{a(Y_F)}{g(Y_F)}=\frac{q}{4(q-1)}.
\]
The same method applies to smooth affine plane models with constant nonzero derivative in $y$ and a unique point at infinity, once their pole semigroups are known.

\subsection{The Abd\'on--Torres curve}
\begin{proposition}[Nourozi--Tafazolian--Rahmati \cite{NTR}]\label{prop:ATknown}
Let $q=2^h\ge4$, and let $\mathcal A_q$ be the smooth projective model of
\[
y^{q/2}+y^{q/4}+\cdots+y=x^{q+1}.
\]
Then $a(\mathcal A_q)=q^2/16$.
\end{proposition}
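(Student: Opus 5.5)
The plan is to rerun the proof of Proposition~\ref{prop:additive} with the Artin--Schreier operator $y^q+y$ replaced by the additive polynomial
\[
L(y)=y^{q/2}+y^{q/4}+\cdots+y .
\]
Put $Q=q/2=2^{h-1}$ and $M=q+1=2Q+1$. Since $q\ge4$, we have $Q\ge2$, so $Q$ is even; and $M$ is odd, so $\gcd(Q,M)=1$. The idea is that $\mathcal A_q$ should behave exactly like $Y_F$ with $(q,M)$ replaced by $(Q,2Q+1)$. Then Corollary~\ref{cor:additiveprogressions} with $\lambda=2$ gives $Q^2\cdot2/8=Q^2/4=q^2/16$.

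\emph{Geometric data.} The pole-order argument in the proof of Proposition~\ref{prop:additive} goes through verbatim. Comparing pole orders in $L(y)=x^{q+1}$ gives $Qb=eM$, so $Q\mid e$. Hence the polynomial is irreducible, the point at infinity is totally ramified and unique, and $x,y$ have pole orders $Q$ and $M$ there.

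Since $L$ is separable and $\F_2$-linear, its roots form an $\F_2$-subspace $V\subset k$ of order $Q$. The cover $x:\mathcal A_q\to\mathbb P^1$ is therefore Galois with group $V$, acting by $y\mapsto y+v$. Each of the $Q-1$ quotients by a hyperplane of $V$ is an Artin--Schreier curve $u^2+u=c\,x^{q+1}$ with $c\neq0$, obtained by composing $L$ with a suitable linear form. Such a quotient has odd pole order $M$ and conductor $M+1$. The conductor--different formula then gives total different exponent $(Q-1)(M+1)$ at infinity, so Riemann--Hurwitz yields
\[
g=\frac{(Q-1)(M-1)}{2}=\frac{q(q-2)}{4}.
\]
The semigroup $\langle Q,M\rangle$ is contained in the Weierstrass semigroup at infinity and has the same genus, so the two are equal. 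Finally, $L'(y)=1$, so the affine model is \'etale over the $x$-line. By Proposition~\ref{prop:wildpreserve}, $\divis(dx)=(2g-2)P$.

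\emph{Counting.} Proposition~\ref{prop:method} and \eqref{eq:algorithm}, applied with Ap\'ery representatives $bM$ for $0\le b<Q$, give exactly formula \eqref{eq:additivea} with $q$ replaced by $Q$:
\[
a(\mathcal A_q)=\sum_{j=0}^{Q/2-1}\left\lfloor\frac{(2j+1)(2Q+1)+Q-1}{2Q}\right\rfloor .
\]
This is the case $\lambda=2$, $\varepsilon=1$ of Corollary~\ref{cor:additiveprogressions}. Equivalently, each floor equals $2j+1$, and summing $2j+1$ over $0\le j<Q/2$ gives $Q^2/4=q^2/16$.

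The main obstacle is justifying that Proposition~\ref{prop:additive} and Corollary~\ref{cor:additiveprogressions} may be used with $y^Q+y$ replaced by $L$. This is a transfer of the argument, not a direct citation. The only genuinely new inputs are the Galois group $V$ and the identification of its quadratic quotients as Artin--Schreier curves with odd pole order $M$, which together give the genus. Everything after the genus depends only on the semigroup and on $\divis(dx)$.
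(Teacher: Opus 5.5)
Your proposal is correct and follows essentially the paper's argument: the semigroup $\langle q/2,q+1\rangle$ at the unique point at infinity, $\divis(dx)=(2g-2)P$ from \'etaleness over the $x$-line, and a count of nongaps up to $g-1$ via the Ap\'ery representatives $j(q+1)$ modulo $q/2$, which is exactly the paper's enumeration $iR+j(2R+1)$. The differences are minor. You derive the genus $q(q-2)/4$ and total ramification yourself, via the elementary abelian Galois group and the conductor--different formula, whereas the paper cites Abd\'on--Torres for these facts; and you evaluate the count through the floor formula of Proposition~\ref{prop:additive} (the case $\lambda=2$ of Corollary~\ref{cor:additiveprogressions}) rather than by summing $R-1-2j$ directly.
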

\begin{proof}
By Abd\'on and Torres \cite{AT}, this curve is maximal over $\F_{q^2}$, has genus $g=q(q-2)/4$, and has a unique point $P$ at infinity. The pole orders of $x,y$ are $q/2,q+1$. The semigroup they generate has genus $g$, hence
\[
H(P)=\langle q/2,q+1\rangle.
\]
The affine equation is \'etale over the $x$-line because its derivative in $y$ is $1$. Therefore $\divis(dx)=(2g-2)P$. Put $R=q/2$. Each nongap has a unique expression $iR+j(2R+1)$ with $i\ge0$ and $0\le j<R$. Since $g-1=R(R-1)-1$, the terms at most $g-1$ have $0\le j<R/2$, and for each such $j$ there are $R-1-2j$ choices of $i$. Consequently
\[
a(\mathcal A_q)=\sum_{j=0}^{R/2-1}(R-1-2j)=\frac{R^2}{4}=\frac{q^2}{16}.
\]
For $q=2$ the curve is rational and its $a$-number is zero.
\end{proof}

\section{Cyclic covers of the Hermitian curve}
Let $q=2^h$, $h\ge1$, and let $d\ge1$ be odd. We consider the smooth projective model $C_{q,d}$ of
\begin{equation}\label{eq:cyclicHerm}
x^q+x=y^{q+1},\qquad z^d=y^{q^2}+y.
\end{equation}
Set
\[
N=q^3,\qquad A=q^3+q^2-q-1=(q+1)(q^2-1).
\]
The GK and GGS curves occur at the degrees considered in Sections~6 and~7.

\begin{proposition}\label{prop:Hermgeometry}
The curve $C_{q,d}$ has a unique point $P_\infty$ above the point $Q_\infty$ at infinity of the Hermitian curve. Its genus and a canonical differential satisfy
\begin{equation}\label{eq:Hermgeometry}
g_{q,d}=\frac{dA-N+1}{2},\qquad
\divis(dz)=(2g_{q,d}-2)P_\infty.
\end{equation}
Moreover,
\begin{equation}\label{eq:Hermsemigroup}
H_{C_{q,d}}(P_\infty)=\langle dq,d(q+1),N\rangle,
\end{equation}
and its Ap\'ery set with respect to $N$ is
\begin{equation}\label{eq:HermApery}
\left\{d\bigl(aq+b(q+1)\bigr):0\le a<q^2,\ 0\le b<q\right\}.
\end{equation}
\end{proposition}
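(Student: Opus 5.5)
Apply Theorem~\ref{thm:transfer} with $Y=\mathcal H_q$, the Hermitian curve $x^q+x=y^{q+1}$, $Q=Q_\infty$, $f=y^{q^2}+y$ and $m=d$. The base data come from Proposition~\ref{prop:additive} with the roles of $x$ and $y$ interchanged, i.e.\ $F(y)=y^{q+1}$ and $M=q+1$. The Hermitian curve has a unique point $Q_\infty$ at infinity and genus $g_Y=q(q-1)/2$. Its semigroup is $H_Y(Q_\infty)=\langle q,q+1\rangle$, with $y$ of pole order $q$ and $x$ of pole order $q+1$. Since the affine model is \'etale over the $y$-line, $\divis(dy)=(2g_Y-2)Q_\infty$.

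Now $f=y^{q^2}+y$ has $df=dy$, so $\divis_Y(df)=(2g_Y-2)Q_\infty$. Its pole divisor is $q^2\cdot qQ_\infty=q^3Q_\infty$, so $N=q^3$. Because $d$ is odd and $N$ is a power of $2$, we have $\gcd(d,N)=1$, and the hypotheses of Theorem~\ref{thm:transfer} hold. That theorem gives a unique point $P_\infty$ above $Q_\infty$ and $\divis(dz)=(2g-2)P_\infty$. It also gives the genus:
\[
g=d\frac{q(q-1)}2+\frac{(d-1)(q^3-1)}2=\frac{d(q^2-q+q^3-1)-q^3+1}{2}.
\]
Since $q^3+q^2-q-1=A$, this is \eqref{eq:Hermgeometry}.

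For the semigroup, \eqref{eq:transferH} gives $H_X(P_\infty)=d\langle q,q+1\rangle+N\N=\langle dq,d(q+1),N\rangle$, which is \eqref{eq:Hermsemigroup}. In the proof of Theorem~\ref{thm:transfer} the Ap\'ery set of $H_X$ with respect to $N$ is $d\cdot\operatorname{Ap}(H_Y,N)$. So it remains to compute $\operatorname{Ap}(\langle q,q+1\rangle,q^3)$. Every element of $\langle q,q+1\rangle$ has a unique form $aq+b(q+1)$ with $a\ge0$ and $0\le b<q$, using $q(q+1)=(q+1)q$. We claim the Ap\'ery set is $\{aq+b(q+1):0\le a<q^2,\ 0\le b<q\}$. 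This set has exactly $q^3$ elements. Its elements are pairwise incongruent mod $q^3$: reducing mod $q$ fixes $b$, and then $q(a-a')\equiv0\pmod{q^3}$ with $|a-a'|<q^2$ forces $a=a'$. Each such element $w$ is minimal in its class, because $w-q^3=(a-q^2)q+b(q+1)$ has negative coefficient of $q$ in the unique normal form and so is not in $H_Y$. Hence the set is exactly the Ap\'ery set, and multiplying by $d$ gives \eqref{eq:HermApery}.

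The argument is essentially bookkeeping. The points that need care are confirming the base hypotheses, namely that $df=dy$ has divisor supported at $Q_\infty$ and that the pole order of $f$ is $q^3$, and the minimality check for the Ap\'ery set, which is where the uniqueness of the normal form is used.
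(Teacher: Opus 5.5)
Your proposal is correct and follows the paper's proof: both apply Theorem~\ref{thm:transfer} to $f=y^{q^2}+y$ on the Hermitian curve, with $N=q^3$ and base semigroup $\langle q,q+1\rangle$. The only difference is in the Ap\'ery bookkeeping: the paper proves the unique normal form $d(aq+b(q+1))+cN$ directly on $H_{C_{q,d}}(P_\infty)$ and sets $c=0$, whereas you compute $\operatorname{Ap}(\langle q,q+1\rangle,q^3)$ and multiply by $d$ via the proof of the transfer theorem, which rests on the same reduction modulo $q$ and then $q^3$.
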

\begin{proof}
On the Hermitian curve the pole orders of $y,x$ are $q,q+1$, and $\divis(dy)=(q^2-q-2)Q_\infty$; the latter is Proposition~\ref{prop:wildpreserve} applied to $y:\mathcal H_q\to\mathbb P^1$, since the affine model is \'etale over the $y$-line. Set $f=y^{q^2}+y$. Then $(f)_\infty=NQ_\infty$ and $df=dy$. Theorem~\ref{thm:transfer}, with base semigroup $\langle q,q+1\rangle$, gives \eqref{eq:Hermgeometry} and \eqref{eq:Hermsemigroup}.

Every element of this semigroup has a unique form
\[
d\bigl(aq+b(q+1)\bigr)+cN,
\qquad 0\le a<q^2,\quad 0\le b<q,\quad c\ge0.
\]
Existence follows by first reducing $b$ modulo $q$ and then $a$ modulo $q^2$. For uniqueness, reduce modulo $N$, cancel the unit $d$, and reduce successively modulo $q$ and $q^3$. This determines $b$ and $a$, and then $c$. Taking $c=0$ proves \eqref{eq:HermApery}.
\end{proof}

\begin{corollary}\label{cor:Hermfloors}
For odd $d\ge1$,
\begin{equation}\label{eq:floorcount}
a(C_{q,d})=\sum_{\substack{0\le a<q^2,\ 0\le b<q\\ T_{a,b}>0}}
\left\lfloor\frac{dT_{a,b}+N-1}{2N}\right\rfloor,
\qquad T_{a,b}=A-2aq-2b(q+1).
\end{equation}
A basis of the Cartier kernel consists of the forms
\[
y^{2a}x^{2b}z^{2c}\,dz,
\quad 0\le a<q^2,\ 0\le b<q,\ c\ge0,
\quad d\bigl(aq+b(q+1)\bigr)+cN\le g_{q,d}-1.
\]
\end{corollary}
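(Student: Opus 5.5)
The plan is to combine Proposition~\ref{prop:Hermgeometry} with the Ap\'ery form \eqref{eq:transferApery} of Theorem~\ref{thm:transfer}, and then to argue exactly as in Corollary~\ref{cor:periodicity}.

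\textbf{Data of the transfer.} The base is $Y=\mathcal H_q$ with $Q=Q_\infty$, $f=y^{q^2}+y$, $N=q^3$ and $m=d$. This is odd and coprime to $N$, since $N$ is a power of $2$. The base genus is $g_Y=q(q-1)/2$, so
\[
A=2g_Y+N-1=q^2-q+q^3-1=(q+1)(q^2-1),
\]
which matches the constant in the statement. The Ap\'ery set $\operatorname{Ap}(\langle q,q+1\rangle,N)$ consists of the elements $w=aq+b(q+1)$ with $0\le a<q^2$ and $0\le b<q$. This is \eqref{eq:HermApery} divided by $d$, and it is proved in the same way.

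\textbf{The floor formula.} Substitute $g_X-1=(dA-N-1)/2$ into \eqref{eq:transferApery}. The summand attached to $w$ is
\[
\max\{0,1+\lfloor (d(A-2w)-N-1)/(2N)\rfloor\}
=\max\{0,\lfloor (dT_{a,b}+N-1)/(2N)\rfloor\},
\]
where $T_{a,b}=A-2w$.
\begin{itemize}
\item If $T_{a,b}\le 0$, then $dT_{a,b}+N-1<2N$ can fail to be nonnegative only when the floor is negative or zero. In either case the maximum is $0$, so these terms drop out.
\item If $T_{a,b}>0$, the floor is nonnegative, so the maximum can be removed.
\end{itemize}
This gives \eqref{eq:floorcount}, and it is precisely the computation already carried out in Corollary~\ref{cor:periodicity}.

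\textbf{The Cartier kernel basis.} We apply Proposition~\ref{prop:method} on $C_{q,d}$ with $t=z$, using $\divis(dz)=(2g-2)P_\infty$ from \eqref{eq:Hermgeometry}.
\begin{itemize}
\item The function $y^ax^bz^c$ has a pole only at $P_\infty$. Its pole order there is $d(aq+b(q+1))+cN$: the functions $y$ and $x$ have pole orders $q$ and $q+1$ on the Hermitian curve, these are multiplied by the ramification index $d$, and $z$ has pole order $N$.
\item By the uniqueness part of the proof of Proposition~\ref{prop:Hermgeometry}, each nongap is realized by exactly one such monomial with $0\le a<q^2$, $0\le b<q$ and $c\ge0$.
\item Hence these monomials serve as the functions $f_j$ of Proposition~\ref{prop:method}.
\end{itemize}
Squaring them and multiplying by $dz$ yields the stated basis.

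The only point requiring care is the sign analysis of the maximum in the floor formula, and that is routine.
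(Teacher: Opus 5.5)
Your proposal is correct and follows essentially the same route as the paper: substitute $g_{q,d}-1=(dA-N-1)/2$ into the Ap\'ery count, discard the terms with $T_{a,b}<0$, and use the unique normal form from Proposition~\ref{prop:Hermgeometry} together with Proposition~\ref{prop:method} for the basis. The paper applies the count directly to the Ap\'ery elements $dv$ of \eqref{eq:HermApery}, whereas you go through \eqref{eq:transferApery}, which is the same computation.

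The only blemish is the garbled wording of your first bullet. The intended argument is simply that $T_{a,b}\le0$ gives $dT_{a,b}+N-1\le N-1<2N$, so the floor is at most $0$ and the maximum vanishes.
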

\begin{proof}
Proposition~\ref{prop:Hermgeometry} and Lemma~\ref{lem:cartier} reduce the calculation to nongaps at most $g_{q,d}-1$. For an Ap\'ery element $dv$, the number of allowed multiples of $N$ is
\[
\max\left\{0,\left\lfloor\frac{g_{q,d}-1-dv}{N}\right\rfloor+1\right\}
=\max\left\{0,\left\lfloor\frac{d(A-2v)+N-1}{2N}\right\rfloor\right\}.
\]
The integer $A-2v$ is odd. If it is negative, the displayed floor is nonpositive; if it is positive, the floor is nonnegative. This proves \eqref{eq:floorcount}. The functions $y^ax^bz^c$ in the specified range have distinct pole orders and exhaust the relevant nongaps. Squaring them and multiplying by $dz$ gives the asserted basis.
\end{proof}

\section{The Giulietti--Korchm\'aros curve}
We first evaluate the count for $d=d_0:=q^2-q+1$, so that $C_{q,d_0}$ is the GK curve. This curve was introduced in \cite{GK}; its semigroups and the divisor of $dz$ are also discussed in \cite{BM}.

\begin{proposition}\label{prop:GK}
For $q=2^h$, $h\ge1$,
\begin{equation}\label{eq:GKanswer}
a(C_{q,d_0})=\frac{q(q+1)(3q^3-6q^2+10q-4)}{24}.
\end{equation}
\end{proposition}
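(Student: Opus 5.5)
The plan is to specialize Corollary~\ref{cor:Hermfloors} to $d=d_0=q^2-q+1$ and evaluate the floor sum \eqref{eq:floorcount} in closed form. The key simplification is $d_0(q+1)=q^3+1=N+1$. It gives
\[
d_0A=(q^3+1)(q^2-1)=q^5-q^3+q^2-1,
\]
and hence, with $v=aq+b(q+1)$,
\[
\frac{d_0T_{a,b}+N-1}{2N}=\frac{q^5+q^2-2-2d_0v}{2q^3}.
\]
I will reparametrize the Ap\'ery element as $v=uq+b$, with $u=a+b$, $0\le b<q$ and $b\le u<q^2+b$. Then
\[
d_0v=u(q^3-q^2+q)+b(q^2-q+1),
\]
and the summand becomes
\[
\frac{q^2}{2}-u+\left\lfloor\frac{q^2-2+2u(q^2-q)-2b(q^2-q+1)}{2q^3}\right\rfloor .
\]
The condition $T_{a,b}>0$ becomes $2(uq+b)<A$, that is, roughly $u<(q^2+q)/2-\tfrac12$, with the precise cutoff depending on $b$.

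Next I determine the residual floor
\[
E(u,b)=\left\lfloor\frac{q^2-2+2u(q^2-q)-2b(q^2-q+1)}{2q^3}\right\rfloor .
\]
In the relevant range $u\lesssim q^2/2+q/2$ and $b<q$, the numerator lies between about $-2q^3$ and $q^4$. Hence $E$ is a small integer: it lies between $-1$ and about $q/2$. Writing $u=\mu q+\rho$ with $0\le\rho<q$, the numerator is approximately $2q^3\mu+2q^2(\rho-\mu-b)$ plus lower-order terms. So $E(u,b)=\mu+\epsilon$, where $\epsilon\in\{-1,0\}$ is decided by the sign of an explicit linear expression in $\rho,\mu,b$; roughly, $\epsilon=-1$ exactly when $\rho<\mu+b$, with boundary cases to check. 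I will determine $\epsilon$ precisely, including the boundary cases where the lower-order terms $q^2-2$ and $-2uq-2b$ matter, and record the resulting piecewise formula for each summand.

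The final step is to sum $q^2/2-u+\mu+\epsilon$ over the admissible pairs $(u,b)$. This involves sums of linear functions over triangular and trapezoidal regions in $(\mu,\rho,b)$, together with a count of the lattice points where $\epsilon=-1$. Each of these is a polynomial in $q$ of degree at most five, because $q$ is even, so that $q/2$ and $q^2/2$ are integers. The total should collapse to $q(q+1)(3q^3-6q^2+10q-4)/24$.

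The main obstacle I expect is the exact bookkeeping of the floor $E(u,b)$ and of the cutoff $T_{a,b}>0$ near the boundary: the last value of $u$ for each $b$, and the values of $\rho$ close to $\mu+b$, where the lower-order terms decide the floor. I would first settle these by checking small cases directly from \eqref{eq:floorcount}. For $q=2$ the expected answer is $a=2\cdot3\cdot20/24=5$, and $q=4$ provides a second check. These also confirm that no separate treatment of small $q$ is needed. As an independent cross-check, the same value can be obtained from Elkin's formula \eqref{eq:Elkinrank} with the branch data \eqref{eq:GGSbranchdata} at $K=q^3+1$.
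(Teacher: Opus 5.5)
\textbf{There is a genuine gap.} Your reparametrization is correct: with $v=uq+b$, each summand of \eqref{eq:floorcount} at $d=d_0$ equals $q^2/2-u+E(u,b)$. But the structure you then assert for $E$ is false. The proposal also never carries out the summation that is supposed to ``collapse'' to the formula, so as written it is not a proof.

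Write $u=\mu q+\rho$ and $D=\mu+b-\rho$. Then $E=\mu+\lfloor X/(2q^3)\rfloor$ with $X=-2q(q-1)D+q^2-2-2q\mu-2b$. Since $\mu$ runs up to $q/2$ and $b$ up to $q-1$, $D$ can reach about $3q/2$. For $D\ge q+2$ one gets $X<-2q^3$, so $\epsilon=-2$, not $-1$ or $0$. For instance, at $q=8$ take $(a,b)=(17,7)$, so $u=24$, $\mu=3$, $\rho=0$ and $T_{a,b}=169>0$. The summand is $\lfloor(57\cdot169+511)/1024\rfloor=9=32-24+1$, so $E=1=\mu-2$.

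This region is empty for $q=2$ and $q=4$: there $D\le q+1$, and $D=q+1$ gives exactly $X=-2q^3$. So calibrating the boundary cases on $q=2,4$, as you propose, would not detect it, and a piecewise formula fitted there would give the wrong polynomial. Your $q=2$ target is also off. At $q=2$ one has $3q^3-6q^2+10q-4=16$, so the formula gives $4$, not $5$. This matches the nongaps $0,6,8,9$ of $\langle 6,8,9\rangle$ up to $g-1=9$; using $5$ as the check value would flag a correct computation as wrong.

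The idea that removes all this floor bookkeeping is to use the $j$-form \eqref{eq:transfera} on the Hermitian base instead of the Ap\'ery form. Since $d_0(q+1)=N+1$ and $g_{q,d_0}-1=(N+1)(q^2/2-1)$, one has $g-1-jN=(N+1)(q^2/2-1-j)+j$ with $0\le j<d_0$. Hence $\lfloor (g-1-jN)/d_0\rfloor=(q+1)(q^2/2-1-j)$ exactly. Because $d_0-1\ge q^2/2-1$, this gives $a(C_{q,d_0})=\sum_{n=0}^{q^2/2-1}L_{\mathcal H}(n)$ with $L_{\mathcal H}(n)=\ell_{\mathcal H_q}(n(q+1)Q_\infty)$.

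Now set $r=q/2$. One has $L_{\mathcal H}(n)=\binom{n+2}{2}$ for $0\le n\le r-1$. Riemann--Roch, with $\divis(dy)=(2g_{\mathcal H}-2)Q_\infty$ and $g_{\mathcal H}-1=(r-1)(q+1)$, gives $L_{\mathcal H}(n)=(q+1)(n-r+1)+L_{\mathcal H}(2r-2-n)$. Splitting the sum at $r-1$ yields $a=(q+1)J(J+1)/2+r(r+1)(2r+1)/6$ with $J=q(q-1)/2$, which expands to the stated formula.
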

\begin{proof}
Write $H_{\mathcal H}=\langle q,q+1\rangle$ for the Hermitian semigroup at infinity and set
\[
L_{\mathcal H}(n)=\ell_{\mathcal H_q}(n(q+1)Q_\infty),\qquad r=q/2.
\]
Every element of $d_0H_{\mathcal H}+N\N$ has a unique expression $d_0v+jN$, with $v\in H_{\mathcal H}$ and $0\le j<d_0$. Indeed $N\in H_{\mathcal H}$ and $\gcd(d_0,N)=1$. The identities
\[
d_0(q+1)=N+1,\qquad
g_{q,d_0}-1=(N+1)(q^2/2-1)
\]
therefore give
\begin{equation}\label{eq:GKsum}
a(C_{q,d_0})=\sum_{n=0}^{q^2/2-1}L_{\mathcal H}(n).
\end{equation}
Here the reduction can also be checked directly: the bound on $v$ is
\[
\left\lfloor\frac{g_{q,d_0}-1-jN}{d_0}\right\rfloor
=(q+1)(q^2/2-1-j),
\]
and the negative arguments contribute zero.

For $0\le n\le r-1$, the inequality $(n+1)q>n(q+1)$ and the normal forms $aq+b(q+1)$, $a\ge0$, $0\le b<q$, show that
\[
L_{\mathcal H}(n)=\binom{n+2}{2}.
\]
Since $g_{\mathcal H}-1=(r-1)(q+1)$ and $\divis(dy)=(2g_{\mathcal H}-2)Q_\infty$, Riemann--Roch gives
\[
L_{\mathcal H}(n)=(q+1)(n-r+1)+L_{\mathcal H}(2r-2-n).
\]
Splitting \eqref{eq:GKsum} at $r-1$, with $J=q(q-1)/2$, yields
\begin{align*}
a(C_{q,d_0})
&=\frac{(q+1)J(J+1)}2
+2\sum_{n=0}^{r-2}\binom{n+2}{2}+\binom{r+1}{2}\\
&=\frac{(q+1)J(J+1)}2+\frac{r(r+1)(2r+1)}6.
\end{align*}
Expansion proves \eqref{eq:GKanswer}. As in the Suzuki case, the middle term $L_{\mathcal H}(r-1)=\binom{r+1}{2}=q(q+2)/8$ is the $a$-number \eqref{eq:Hermitian-a} of the base curve.
\end{proof}

\section{The Garcia--G\"uneri--Stichtenoth curves}
Let $n\ge3$ be odd and set $d_n=(q^n+1)/(q+1)$. Denote $C_{q,d_n}$ by $\mathcal G_{q,n}$. These are the GGS curves \cite{GGS}; their unique point at infinity and genus are recalled in \cite[Section 2.1]{GMP}. We have
\[
g(\mathcal G_{q,n})=\frac{(q-1)(q^{n+1}+q^n-q^2)}2.
\]

\begin{lemma}\label{lem:arithmetic}
Put $d_0=q^2-q+1$ and
\[
\mathcal P=\{(a,b):0\le a<q^2,\ 0\le b<q,\ T_{a,b}>0\},
\qquad T_{a,b}=A-2aq-2b(q+1).
\]
For each $(a,b)\in\mathcal P$, let $R_{a,b}$ be the least nonnegative residue of $d_0T_{a,b}+N-1$ modulo $2N$. Then
\begin{align}
|\mathcal P|&=\frac N2,\label{eq:M}\\
\Sigma_q:=\sum_{(a,b)\in\mathcal P}T_{a,b}
&=\frac{3q^6+q^4+2q^3-q^2-2q}{12},\label{eq:S}\\
E_q:=\#\{(a,b)\in\mathcal P:R_{a,b}\ge N\}
&=\frac{q(2q^2-3q+2)}8.\label{eq:E}
\end{align}
\end{lemma}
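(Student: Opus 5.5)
The three counts will be handled separately, with \eqref{eq:M} and \eqref{eq:S} coming from a symmetry of the index box and \eqref{eq:E} from reducing $R_{a,b}$ to an explicit linear expression modulo $2N$.

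\textbf{Step 1: the count \eqref{eq:M}.} Use the involution $(a,b)\mapsto(q^2-1-a,\,q-1-b)$ of the box $0\le a<q^2$, $0\le b<q$. A direct substitution, using $A=(q+1)(q^2-1)$ and $2q(q^2-1)+2(q-1)(q+1)=2A$, gives
\[
T_{q^2-1-a,\,q-1-b}=-T_{a,b}.
\]
Since $T_{a,b}$ is odd, it never vanishes. Hence exactly half of the $q^3=N$ pairs lie in $\mathcal P$, which is \eqref{eq:M}.

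\textbf{Step 2: the sum \eqref{eq:S}.} By the same involution, $\Sigma_q=\tfrac12\sum|T_{a,b}|$ over the whole box. It is probably easier to compute $\Sigma_q$ directly, one value of $b$ at a time. Write
\[
A-2b(q+1)=(q+1)(q^2-1-2b).
\]
For fixed $b$, the condition $T_{a,b}>0$ reads $0\le a\le a_{\max}(b)$ with
\[
a_{\max}(b)=\left\lfloor\frac{(q+1)(q^2-1-2b)-1}{2q}\right\rfloor .
\]
Writing $(q+1)(q^2-1-2b)=q(q^2-1-2b)+(q^2-1-2b)$, this floor equals $(q^2-2-2b)/2+\lfloor (q^2-1-2b)/(2q)\rfloor$ up to an explicit correction. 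The floor term is determined by the block of $b$ relative to multiples of $q$; here $b<q$ only, so it takes one or two values. Then:
\begin{itemize}
\item the inner sum over $a$ is an arithmetic progression, equal to $(a_{\max}+1)\bigl(A-2b(q+1)-q\,a_{\max}\bigr)$;
\item summing this over $b$ gives polynomial sums in $b$ up to degree $2$.
\end{itemize}
Expanding yields \eqref{eq:S}. As a consistency check, $\sum_b(a_{\max}(b)+1)$ must equal $N/2$ from Step 1.

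\textbf{Step 3: reduction of $R_{a,b}$ modulo $2N$.} Use $d_0(q+1)=N+1$ and $qd_0=N-(q^2-q)$. These give, modulo $2N$,
\[
d_0T_{a,b}+N-1\equiv (N+1)(q^2-1-2b)+2a(q^2-q)+N-1 \equiv q^2-2-2b+2a(q^2-q),
\]
because $q^2-1-2b$ is odd, so $N(q^2-1-2b)\equiv N$. Dividing by $2$, the condition $R_{a,b}\ge N$ becomes
\[
\bigl(q^2/2-1-b+a(q^2-q)\bigr)\bmod N\;\ge\;N/2,
\]
to be counted over $(a,b)\in\mathcal P$.

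\textbf{Step 4: the count \eqref{eq:E}, the main obstacle.} Write $a=uq+e$ with $0\le e<q$ and $0\le u<q$. Then $a(q^2-q)\equiv uq^3-uq^2+eq^2-eq\equiv q^2(e-u)-eq \pmod N$. So the residue is
\[
q^2\bigl((e-u)\bmod q\bigr)+\bigl(q^2/2-1-b-eq\bigr),
\]
up to carries. The middle quantity lies between $-q^2$ and $q^2/2$, so at most one borrow occurs. Whether the residue is at least $N/2=q^2\cdot(q/2)$ is then essentially the condition that the $q^2$-digit $(e-u-[\text{borrow}])\bmod q$ lies in $[q/2,q)$. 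Combining this with the positivity constraint $a\le a_{\max}(b)$ from Step 2 reduces $E_q$ to counting lattice points in a few triangles in the $(u,e,b)$ coordinates. Summing these should give $q(2q^2-3q+2)/8$.

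I expect the bookkeeping of the borrow together with the truncation $a\le a_{\max}(b)$ to be the delicate part. If it becomes messy, I will first verify the formula for $q=2,4$ by hand, to fix the case boundaries, before summing.
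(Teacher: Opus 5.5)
Your Steps 1--3 are sound. The involution $(a,b)\mapsto(q^2-1-a,\,q-1-b)$ negates the odd integer $T_{a,b}$, which gives a cleaner proof of \eqref{eq:M} than the paper's direct count $\sum_b c_b=q^3/2$. Step 2 is the paper's computation, but you should state the row length explicitly: $a_{\max}(b)+1=c_b:=q^2/2+q/2-b-\epsilon_b$, with $\epsilon_b=0$ for $b<q/2$ and $\epsilon_b=1$ otherwise. Your floor $\lfloor(q^2-1-2b)/(2q)\rfloor$ is constantly $q/2-1$, so all the $b$-dependence sits in the unspecified ``correction''. Step 3 is exactly the paper's congruence $\tfrac12(d_0T_{a,b}+N-1)\equiv q^2/2-1+a(q^2-q)-b\pmod N$.

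The gap is Step 4, which is the real content of \eqref{eq:E}. ``Summing these should give'' is a prediction, not a proof, and the triangle counts you anticipate in $(u,e,b)$ miss the observation that makes the count easy. Because $0\le b<q$, the quantity $q^2/2-1-b-eq$ is negative exactly when $e\ge q/2$. So the borrow, and with it the condition $R_{a,b}\ge N$, depends only on $a$. The paper sees this by writing the residue as $q\bigl(r-1+a(q-1)\bigr)+(q-1-b)$, with $r=q/2$ and $0\le q-1-b<q$. Then $b$ enters only through the constraint $a<c_b$, and the count is short:
\begin{itemize}
\item For $u<r$, the condition holds iff $e<u$ or $e\ge r+u+1$. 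This gives $r-1$ values of $a$ per $u$, all below $q^2/2-r=\min_b c_b$. Each pairs with all $q$ values of $b$, contributing $qr(r-1)$.
\item Since $\max_b c_b=q^2/2+r$, the only other admissible $a$ are $q^2/2+e$ with $0\le e<r$. Their leading digit is $r+e\ge r$, so all qualify, and $a<c_b$ holds for exactly $r-e$ values of $b$. This contributes $r(r+1)/2$.
\end{itemize}
Finally $qr(r-1)+r(r+1)/2=q(2q^2-3q+2)/8$.

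Until you supply this count, or an equivalent explicit one, \eqref{eq:E} is asserted rather than proved. The unexpanded polynomial sum in Step 2 is a lesser instance of the same issue.
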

\begin{proof}
Write $q=2r$, and put $\epsilon_b=0$ for $0\le b<r$ and $\epsilon_b=1$ for $r\le b<q$. For a fixed $b$, the positive values of $T_{a,b}$ occur precisely for
\begin{equation}\label{eq:cb}
0\le a<c_b,\qquad c_b=q^2/2+q/2-b-\epsilon_b.
\end{equation}
In particular, $\sum_b c_b=q^3/2$, proving \eqref{eq:M}. Summing the arithmetic progressions in $a$ gives
\[
\Sigma_q=\sum_{b=0}^{q-1}
c_b\bigl(A-2b(q+1)-q(c_b-1)\bigr).
\]
To evaluate this explicitly, pair $b$ with $q-1-b$ for $0\le b<r$. If $C=q^2/2+r$, their respective values of $c_b$ are $C-b$ and $C-q+b$. Substitution in the preceding sum, followed by
\[
\sum_{b=0}^{r-1}b=\frac{r(r-1)}2,\qquad
\sum_{b=0}^{r-1}b^2=\frac{r(r-1)(2r-1)}6,
\]
gives
\[
\Sigma_q=\frac{48r^6+4r^4+4r^3-r^2-r}{3},
\]
which is \eqref{eq:S}.

For \eqref{eq:E}, the numerator defining $R_{a,b}$ is even. Using $d_0(q+1)=N+1$, reduction modulo $N$ after division by $2$ gives
\[
\frac{d_0T_{a,b}+N-1}{2}
\equiv q^2/2-1+a(q^2-q)-b\pmod N.
\]
The right side is
\[
q\bigl(r-1+a(q-1)\bigr)+(q-1-b).
\]
As $0\le q-1-b<q$, the condition $R_{a,b}\ge N$ is therefore equivalent to
\begin{equation}\label{eq:Econdition}
\bigl(r-1+a(q-1)\bigr)\bmod q^2\ \ge q^2/2.
\end{equation}
This condition depends only on $a$.

Write $a=qu+v$, $0\le v<q$. The integer in \eqref{eq:Econdition} is congruent modulo $q^2$ to
\[
q(v-u)+r-1-v.
\]
For $v<r$ the final term is nonnegative, whereas for $v\ge r$ one rewrites this expression as
\[
q(v-u-1)+(q+r-1-v),\qquad 0\le q+r-1-v<q.
\]
Thus its leading base-$q$ digit is respectively $(v-u)\bmod q$ or $(v-u-1)\bmod q$. For $0\le u<r$, condition \eqref{eq:Econdition} holds exactly when
\[
\begin{cases}
v<u,&0\le v<r,\\
v\ge r+u+1,&r\le v<q.
\end{cases}
\]
Each such block contains $r-1$ values. All these values have $a<q^2/2-r$, so each is paired with every $b\in\{0,\ldots,q-1\}$ in \eqref{eq:cb}. There are no further solutions with $a<q^2/2$. The remaining possible values are $a=q^2/2+v$, $0\le v<r$. All satisfy \eqref{eq:Econdition}, and \eqref{eq:cb} permits exactly $r-v$ choices of $b$. Hence
\[
E_q=qr(r-1)+\sum_{v=0}^{r-1}(r-v)
=qr(r-1)+\frac{r(r+1)}2,
\]
which is \eqref{eq:E}.
\end{proof}

\begin{proof}[Proof of Theorem~\ref{thm:GGS}]
The case $n=3$ is Proposition~\ref{prop:GK}. Suppose $n\ge5$. Set
\[
u_n=\frac{q^{n-3}-q-2}{2(q+1)}.
\]
This is a nonnegative integer: $n-3$ is even, so the numerator is divisible by $q+1$, and the quotient is even because $q+1$ is odd. Direct calculation gives
\begin{equation}\label{eq:dnshift}
d_n=d_0+N+2Nu_n.
\end{equation}
For an odd positive $T$, write $d_0T+N-1=2NF+R$, $0\le R<2N$. Then
\[
\left\lfloor\frac{(d_0+N)T+N-1}{2N}\right\rfloor
=F+\frac{T-1}{2}+\boldsymbol{1}_{R\ge N}.
\]
Summing over $\mathcal P$ and applying Corollary~\ref{cor:Hermfloors} and Lemma~\ref{lem:arithmetic}, we obtain
\begin{equation}\label{eq:ashift}
a(C_{q,d_0+N})=a(C_{q,d_0})+\frac{\Sigma_q-N/2}{2}+E_q.
\end{equation}
Increasing $d$ by $2Nu$ adds $uT$ to each summand in \eqref{eq:floorcount}. Equations \eqref{eq:dnshift} and \eqref{eq:ashift} therefore yield
\[
a(\mathcal G_{q,n})
=u_n\Sigma_q+a(C_{q,d_0})+\frac{\Sigma_q-N/2}{2}+E_q.
\]
Substituting \eqref{eq:GKanswer}, \eqref{eq:S}, and \eqref{eq:E} first gives
\[
a(\mathcal G_{q,n})
=u_n\Sigma_q+\frac{3q^6+3q^5-2q^4+6q^3-4q^2}{24}.
\]
Inserting $u_n$ and collecting terms proves \eqref{eq:GGSanswer}.
\end{proof}

\begin{remark}
The restriction $n\ge5$ in \eqref{eq:GGSanswer} is necessary. Substituting $n=3$ into its right side gives the GK $a$-number minus $q(3q-2)/8$. The separate formula for $n=3$ reflects the different residue of $d_3$ modulo $2q^3$.
\end{remark}

For example,
\[
a(\mathcal G_{2,n})=3\cdot2^{n-3}\quad(n\ge5\text{ odd}),
\qquad a(\mathcal G_{2,3})=4.
\]
For each fixed odd $n\ge3$, the leading terms of the genus and the formulas above give $a(\mathcal G_{q,n})/g(\mathcal G_{q,n})\to1/4$ as $q=2^h\to\infty$.

The other limit behaves differently. For $n\ge5$, equation \eqref{eq:dnshift} places every $d_n$ in the single residue class $d_0+N$ modulo $2N$. Corollary~\ref{cor:periodicity}, with $Y=\mathcal H_q$, $N=q^3$, and $S=\Sigma_q$, therefore gives, for fixed $q$,
\begin{equation}\label{eq:GGSlimitn}
\lim_{n\to\infty}\frac{a(\mathcal G_{q,n})}{g(\mathcal G_{q,n})}
=\frac{\Sigma_q}{NA}
=\frac{3q^5+q^3+2q^2-q-2}{12q^2(q+1)^2(q-1)}.
\end{equation}
Since $3q^5+q^3+2q^2-q-2-3q^2(q+1)^2(q-1)=-(q-2)(3q^3+2q^2-q-1)$, this limit equals $1/4$ for $q=2$ and is strictly smaller than $1/4$ for $q\ge4$; for instance it is $527/2400\approx0.2196$ for $q=4$.

A comparison with the same-genus Beelen--Montanucci family is given in Section~8.

\section{The Beelen--Montanucci curves}\label{sec:BM}
We apply Theorem~\ref{thm:multitransfer} to \eqref{eq:BMmodel}. We use the geometry described in \cite{BMnew,MPL} and first calculate the divisor of $dz$.

\subsection{Geometry and the half-canonical divisor}
Let $Y$ be the Hermitian curve $y^{q+1}=x^{q+1}+1$, and let $I=Q_1+\cdots+Q_{q+1}$ be its divisor at infinity. Set
\[
{N_B}=q^2-q,\qquad f=y\frac{x^{q^2}+x}{x^{q+1}+1}
=\frac{x^{q^2}+x}{y^q}.
\]
The polynomial $x^{q+1}+1$ divides $x^{q^2}+x$, so $f$ is regular on the affine curve. Both $x$ and $y$ have pole order one at each $Q_i$, giving $(f)_\infty=N_B I$. Differentiation in characteristic two and the plane adjunction formula give
\begin{equation}\label{eq:BMdf}
df=\frac{dx}{y^q},\qquad \divis_Y(df)=(q-2)I.
\end{equation}
Indeed, $dx/y^q=dx/F_y$ for the smooth plane equation $F=y^{q+1}+x^{q+1}+1$ of degree $q+1$; it is a nowhere vanishing differential on the affine curve and has order $q-2$ at each point of the transverse line at infinity.

For any positive odd $m$ coprime to ${N_B}$, let $B_{q,m}$ be the smooth projective curve defined by $z^m=f$, and let ${E_B}=P_1+\cdots+P_{q+1}$ be the divisor above $I$. Theorem~\ref{thm:multitransfer} yields
\begin{equation}\label{eq:BMbm}
\divis(dz)=2b_m E_B,\qquad
b_m=\frac{m(q^2-1)-{N_B}-1}{2},
\qquad a(B_{q,m})=\ell_{B_{q,m}}(b_m E_B).
\end{equation}
The genus is $(m(q+1)(q^2-1)-q^3+1)/2$. At $m=d_n$, the coefficient $2b_m$ becomes $q^{n+1}-q^n-q^2+2q-2$, agreeing with \cite[Lemma~2.3 and Section~4.1]{MPL}.

\subsection{The floor sum}
Write $F_q(k)=\ell_Y(kI)$, with $F_q(k)=0$ for $k<0$. The plane embedding gives, for $k\ge0$,
\begin{equation}\label{eq:BMhilbert}
F_q(k)=\binom{k+2}{2}-
\begin{cases}
\binom{k-q+1}{2},&k\ge q+1,\\
0,&0\le k\le q.
\end{cases}
\end{equation}
To justify the formula, use the exact sequence obtained by restricting $\mathcal O_{\mathbb P^2}(k)$ to the plane curve of degree $q+1$, together with $H^1(\mathbb P^2,\mathcal O(t))=0$. In particular,
\[
F_q(k)-F_q(k-1)=\min\{k+1,q+1\}\quad(k\ge0).
\]

\begin{proposition}\label{prop:BMfloor}
For positive odd $m$ coprime to ${N_B}=q^2-q$,
\begin{equation}\label{eq:BMfloor}
a(B_{q,m})=
\sum_{i=0}^{q^2/2-1}\min\{i+1,q+1\}
\left\lfloor\frac{m(q^2-1-2i)+{N_B}-1}{2{N_B}}\right\rfloor.
\end{equation}
\end{proposition}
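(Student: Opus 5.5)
We apply the multipoint formula \eqref{eq:multitransfer} of Theorem~\ref{thm:multitransfer} with $s=q+1$, all $N_i=N_B$ and all $c_i=q-2$. Since every $Q_i$ carries the same data, the $j$th summand becomes $\ell_Y(k_jI)=F_q(k_j)$, where
\[
k_j=\left\lfloor\frac{b_m-jN_B}{m}\right\rfloor,\qquad b_m=\frac{m(q^2-1)-N_B-1}{2}.
\]
Thus $a(B_{q,m})=\sum_{j=0}^{m-1}F_q(k_j)$, with negative $k_j$ contributing zero. Next we write each $F_q(k)$ as a sum of its increments: $F_q(k)=\sum_{i=0}^{k}\delta_i$, where $\delta_i=\min\{i+1,q+1\}$ by \eqref{eq:BMhilbert}. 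Exchanging the order of summation gives
\[
a(B_{q,m})=\sum_{i\ge0}\delta_i\cdot\#\{0\le j<m:\ k_j\ge i\}.
\]

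Everything then reduces to counting, for each $i$, the indices $j$ with $0\le j<m$ and $k_j\ge i$. Since $k_j\ge i$ is equivalent to $b_m-jN_B\ge mi$, the condition reads $j\le (b_m-mi)/N_B$. Substituting the formula for $b_m$ gives
\[
\frac{b_m-mi}{N_B}=\frac{m(q^2-1-2i)-N_B-1}{2N_B}.
\]
The number of admissible $j\ge0$ is therefore
\[
\max\left\{0,\ 1+\left\lfloor\frac{m(q^2-1-2i)-N_B-1}{2N_B}\right\rfloor\right\}
=\max\left\{0,\ \left\lfloor\frac{m(q^2-1-2i)+N_B-1}{2N_B}\right\rfloor\right\}.
\]
We still have to check that the cap $j<m$ never binds. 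At $i=0$ the count is at most $\lfloor (m(q^2-1)+N_B-1)/(2N_B)\rfloor$. Since $q^2-1<2N_B=2q^2-2q$ for $q\ge2$, this is less than $m$, and so the cap is harmless.

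Finally we identify the range of $i$. Because $q^2-1-2i$ is odd, the floor is nonnegative exactly when $q^2-1-2i>0$, that is, when $i\le q^2/2-1$, and it is nonpositive otherwise. This truncates the sum to $0\le i\le q^2/2-1$ and removes the $\max$, which yields \eqref{eq:BMfloor}. The only delicate points are this sign analysis, where oddness is used, and confirming the bound $j<m$; the rest is bookkeeping.
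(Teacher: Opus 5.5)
Your argument is correct and is essentially the paper's proof: apply the multipoint formula, expand $F_q$ into its jumps $\min\{i+1,q+1\}$, exchange the sums, and count the $j$ with $jN_B\le b_m-mi$. One small correction: at $i=0$ the floor can equal $m$ (it does when $m=1$), so you should claim only that the count is at most $m$, which is all you need. Alternatively, as the paper does, note that $2(mN_B-b_m)=m(q-1)^2+N_B+1>0$, so every admissible $j$ automatically satisfies $j<m$.
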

\begin{proof}
The multipoint transfer formula first gives
\[
a(B_{q,m})=\sum_{j=0}^{m-1}
F_q\!\left(\left\lfloor\frac{b_m-jN_B}{m}\right\rfloor\right).
\]
Expanding $F_q$ into its dimension jumps and reversing the finite sums, the coefficient of $\min\{i+1,q+1\}$ is the number of $j\in\{0,\ldots,m-1\}$ with $jN_B\le b_m-mi$. Since
\[
\frac{b_m}{m}<\frac{q^2-1}{2},\qquad
2(mN_B-b_m)=m(q-1)^2+{N_B}+1>0,
\]
only $0\le i\le q^2/2-1$ can contribute, and the upper bound $j<m$ is automatic whenever the count is nonzero. The count is therefore
\[
\max\left\{0,1+\left\lfloor\frac{b_m-mi}{{N_B}}\right\rfloor\right\}
=\left\lfloor\frac{m(q^2-1-2i)+{N_B}-1}{2{N_B}}\right\rfloor.
\]
The last floor is nonnegative because $q^2-1-2i\ge1$ in this range. This proves the formula.
\end{proof}

\subsection{Evaluation at the Beelen--Montanucci degrees}
\begin{proof}[Proof of Theorem~\ref{thm:BM}]
Put $m_3=q^2-q+1={N_B}+1$. For odd $n\ge3$,
\[
m=d_n=1+{N_B}(1+q^2+\cdots+q^{n-3}),
\]
so $m=m_3+2N_B u$ for an integer $u\ge0$. In particular $(m,{N_B})=1$. In \eqref{eq:BMfloor}, increasing $m$ by $2{N_B}$ adds $q^2-1-2i$ to the $i$th floor. Hence
\begin{equation}\label{eq:BMshift}
a(\mathcal B_{q,n})=a(\mathcal B_{q,3})+
\frac{m-m_3}{2{N_B}}\,S_q^{\rm BM},
\end{equation}
where
\begin{align*}
S_q^{\rm BM}
&=\sum_{i=0}^{q^2/2-1}\min\{i+1,q+1\}(q^2-1-2i)\\
&=\frac{(q+1)q^4}{4}
-\sum_{i=0}^{q-1}(q-i)(q^2-1-2i)\\
&=\frac{(q+1)q^4}{4}
-\frac{q(q+1)(3q^2-2q-1)}6\\
&=\frac{q(q+1)(3q^3-6q^2+4q+2)}{12}.
\end{align*}
The middle equality follows by substituting $j=q-i$ and summing $j$ and $j^2$. Since $\mathcal B_{q,3}$ is the GK curve, Proposition~\ref{prop:GK} supplies the initial value in \eqref{eq:BMshift}. Substituting $m=(q^n+1)/(q+1)$ gives
\[
a(\mathcal B_{q,n})=
\frac{q(q+1)(3q^3-6q^2+10q-4)}{24}
+\frac{(q^n-q^3)(3q^3-6q^2+4q+2)}{24(q-1)},
\]
which simplifies to \eqref{eq:BManswer}. The leading terms establish the limit for fixed $n$.
\end{proof}

For fixed $q$, there is a different limiting ratio:
\[
\lim_{\substack{n\to\infty\\n\text{ odd}}}
\frac{a(\mathcal B_{q,n})}{g(\mathcal B_{q,n})}
=\frac{3q^3-6q^2+4q+2}{12(q-1)^2(q+1)}.
\]
For $q=2$ it is $5/18$, whereas the corresponding GGS limit is $1/4$. The difference in the $a$-numbers also distinguishes the underlying abelian varieties, as follows.

\begin{corollary}\label{cor:JacobianComparison}
For $q=2^h$ and odd $n\ge3$, the Jacobians of $\mathcal G_{q,n}$ and $\mathcal B_{q,n}$ are isogenous over $\F_{q^{2n}}$. Whenever their $a$-numbers differ, these Jacobians are not isomorphic even as unpolarized abelian varieties over $k$. In particular, this occurs for $q=2$ and $n=5$, where
\[
g=46,\qquad a(\mathcal G_{2,5})=12,\qquad a(\mathcal B_{2,5})=14.
\]
\end{corollary}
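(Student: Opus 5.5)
The plan has three parts: the isogeny over $\F_{q^{2n}}$, the non-isomorphism statement, and the numerical example.

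\emph{Isogeny.} Both $\mathcal G_{q,n}$ and $\mathcal B_{q,n}$ are maximal over $\F_{q^{2n}}$ and have the same genus $g$. For a maximal curve over $\F_{Q^2}$ with $Q=q^n$, the numerator of the zeta function is $(1+QT)^{2g}$. Equivalently, the Frobenius of the Jacobian acts on the $\ell$-adic Tate module with characteristic polynomial $(T+Q)^{2g}$. By Tate's isogeny theorem over finite fields, two abelian varieties over $\F_{q^{2n}}$ with the same characteristic polynomial of Frobenius are isogenous over that field. So the Jacobians are isogenous over $\F_{q^{2n}}$. The only inputs are the maximality statements (from \cite{GGS} and \cite{BMnew}) and the genus equality already recorded.

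\emph{Non-isomorphism.} The $a$-number of an abelian variety $A$ is $\dim_k\operatorname{Hom}(\alpha_2,A[2])$. This is an invariant of the group scheme $A[2]$, so it is preserved by any isomorphism of abelian varieties over $k$, with or without polarizations. For a curve, this agrees with the Cartier-kernel dimension used in the paper, via the standard identification $H^0(X,\Omega^1)\cong\mathrm{Lie}$ of the dual, under which $\Cart$ corresponds to Verschiebung. Hence, if the $a$-numbers of the two curves differ, the Jacobians cannot be isomorphic over $k$, even as unpolarized abelian varieties.

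\emph{Numerical example.} Specialize to $q=2$, $n=5$.
\begin{itemize}
\item The genus formula gives $g=(1)(64+32-4)/2=46$.
\item For $\mathcal G$, the example after the proof of Theorem~\ref{thm:GGS} gives $a(\mathcal G_{2,n})=3\cdot 2^{n-3}$, so $a(\mathcal G_{2,5})=12$. Alternatively, substitute into \eqref{eq:GGSanswer}: $4(192+16+16-4-4)-8+8=864$, and $864/72=12$.
\item For $\mathcal B$, evaluate \eqref{eq:BManswer}. The first term is $32(24-24+8+2)=320$, and the remaining terms are $48-40+8=16$. The total is $336$, and dividing by $24(q-1)=24$ gives $14$.
\end{itemize}
Since $12\neq14$, the non-isomorphism criterion applies to this case.

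The only point that needs care is the invariance step: that the Cartier-kernel $a$-number of the curve equals the group-scheme invariant of $\operatorname{Jac}[2]$. This is classical (Oda), so it can be cited rather than proved.
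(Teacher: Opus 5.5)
Your proposal is correct and follows the paper's proof: equal genus and maximality over $\F_{q^{2n}}$ give the common Frobenius polynomial $(T+q^n)^{2g}$, and Tate's theorem gives the isogeny. An isomorphism of abelian varieties would induce one of the $2$-torsion group schemes and hence preserve the $a$-number, and the values come from Theorems~\ref{thm:GGS} and~\ref{thm:BM}; your evaluations $864/72=12$ and $336/24=14$ are right.

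One wording slip in your invariance remark: $H^0(X,\Omega^1)$ is the cotangent space $\operatorname{Lie}(\operatorname{Jac}X)^{\vee}$, not the Lie algebra of the dual, which is $H^1(X,\mathcal O_X)$. Equivalently, it is the subspace $\ker F=\operatorname{im}V$ of $H^1_{\mathrm{dR}}(X)$ on which $V$ restricts to $\Cart$. The classical identification you cite is still the right one.
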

\begin{proof}
The two curves have the same genus and are maximal over $\F_{q^{2n}}$. Hence their Jacobians have the same Frobenius characteristic polynomial $(T+q^n)^{2g}$. Tate's isogeny theorem over finite fields \cite{Tate} gives the asserted isogeny. An isomorphism of the underlying abelian varieties would induce an isomorphism of their $2$-torsion group schemes and preserve the $a$-number. The displayed values follow from Theorems~\ref{thm:GGS} and~\ref{thm:BM}.
\end{proof}

The nonisomorphism of the curves for $n\ge5$ is already known from \cite[Corollary~2.6]{BMnew}; the corollary concerns their Jacobians without a chosen polarization.

\begin{center}
\begin{tabular}{rrrrr}
\toprule
$q$ & $n$ & genus & $a(\mathcal B_{q,n})$ & $a(\mathcal G_{q,n})$\\
\midrule
2 & 3 & 10 & 4 & 4\\
2 & 5 & 46 & 14 & 12\\
2 & 7 & 190 & 54 & 48\\
4 & 3 & 456 & 110 & 110\\
4 & 5 & 7\,656 & 1\,630 & 1\,686\\
4 & 7 & 122\,856 & 25\,950 & 26\,982\\
8 & 5 & 1\,031\,968 & 231\,396 & 234\,508\\
\bottomrule
\end{tabular}
\end{center}

\begin{remark}
For $n\ge5$, the distinguished rational points of $\mathcal B_{q,n}$ have nonsymmetric semigroups \cite[Corollary~5.3]{MPL} and hence are not subcanonical. The divisor in \eqref{eq:BMbm} is supported at the $q+1$ points above $I$.
\end{remark}

\section{Odd-degree radical covers of the Suzuki curve}
Let $r=2^s$, $q=2r^2$, and let $S_q$ and ${\Delta_S}=q+2r+1$ be as in Section~\ref{sec:Suzuki}. For every odd $m\ge1$, consider
\[
X_{q,m}:\quad y^q+y=x^r(x^q+x),\qquad t^m=x^q+x.
\]
Apply Theorem~\ref{thm:transfer} with $Y=S_q$, $f=x^q+x$, and $N=q^2$. Since $df=dx$ and the pole order of $x^q+x$ is $q^2$, its hypotheses hold. We obtain
\[
g_m=m r(q-1)+\frac{(m-1)(q^2-1)}2,
\]
\[
H_{X_{q,m}}(P_\infty)=
\langle mq,m(q+r),m(q+2r),m\Delta_S,q^2\rangle,
\]
\begin{equation}\label{eq:generalSuzukiCover}
a(X_{q,m})=\sum_{j=0}^{m-1}
\ell_{S_q}\!\left(\left\lfloor\frac{g_m-1-jq^2}{m}\right\rfloor Q_\infty\right).
\end{equation}
The formula holds for every odd $m$ and is affine on each odd residue class modulo $2q^2$ by Corollary~\ref{cor:periodicity}. We now evaluate it at the Skabelund degree $m=q-2r+1$.

\subsection{The Skabelund specialization}

Return to $r=q_0=2^s$ and $q=2r^2$. We retain the Suzuki notation ${\Delta_S}$, $g_S$, $H_S$, and $\mathcal L_S(n)$ from Section~\ref{sec:Suzuki}, and put $m=q-2r+1$, which is odd. Then $m\Delta_S=q^2+1$ and $X_q=X_{q,m}$. Let $P_\infty$ denote the unique point at infinity of $X_q$, and put $g=g(X_q)$; the formula for $g_m$ above gives \eqref{eq:genus}. The preceding application of Theorem~\ref{thm:transfer} gives
\begin{align}
\divis_{X_q}(dt)&=(2g-2)P_\infty,\label{eq:dt}\\
H_X:=H_{X_q}(P_\infty)&=\langle mq,m(q+r),m(q+2r),q^2,m\Delta_S\rangle.\label{eq:skabsg}
\end{align}
Skabelund \cite{Skabelund} stated the generators in \eqref{eq:skabsg}; Beelen, Landi, and Montanucci proved the semigroup equality \cite[Theorem~3.2]{BLM} and computed the exact differential divisor in \cite[proof of Corollary~2.2]{BLM}. Here both identities also follow from Theorem~\ref{thm:transfer}. Proposition~\ref{prop:method} gives
\begin{equation}\label{eq:count}
a(X_q)=\ell_{X_q}((g-1)P_\infty)
=\#\bigl(H_X\cap[0,g-1]\bigr).
\end{equation}
By the proof of Theorem~\ref{thm:transfer}, every element of $H_X$ has a unique expression $mv+jq^2$ with $v\in H_S$ and $0\le j<m$.

\begin{proposition}\label{prop:sum}
One has
\begin{equation}\label{eq:suzukisum}
a(X_q)=\sum_{n=0}^{r^2-1}\mathcal L_S(n).
\end{equation}
\end{proposition}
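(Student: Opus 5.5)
The plan mirrors the proof of Proposition~\ref{prop:GK}. Start from \eqref{eq:generalSuzukiCover} with $m=q-2r+1$. By \eqref{eq:transfera} and the unique representation $mv+jq^2$ ($v\in H_S$, $0\le j<m$), the count in \eqref{eq:count} equals
\[
\sum_{j=0}^{m-1}\ell_{S_q}\!\left(\left\lfloor\frac{g-1-jq^2}{m}\right\rfloor Q_\infty\right).
\]
The goal is to rewrite each bound as an exact multiple of $\Delta_S$. The key identities are $m\Delta_S=q^2+1$ and a closed form for $g-1$ in terms of $q^2+1$.

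First compute $g-1$. From \eqref{eq:genus}, $g=q(q-1)^2/2$. With $q=2r^2$ one should have $g-1=(q^2+1)(r^2-1)$. To check this, expand $(q^2+1)(q/2-1)=q^3/2-q^2+q/2-1$, which is exactly $q(q-1)^2/2-1$. Then
\[
g-1-jq^2=(q^2+1)(r^2-1-j)+j,
\]
and dividing by $m$ gives $\Delta_S(r^2-1-j)+j/m$. Since $0\le j<m$, the floor is exactly $(r^2-1-j)\Delta_S$.

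Therefore the $j$th summand is $\mathcal L_S(r^2-1-j)$. Terms with $j>r^2-1$ vanish by the convention $\mathcal L_S(n)=0$ for $n<0$; this covers the range because $m-1=q-2r\ge r^2-1$. Also check that the floor is right when $r^2-1-j<0$: there the argument is negative, so $\ell$ is zero. Indeed $(q^2+1)k+j<0$ for $k\le -1$ and $j<m<q^2+1$.

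Reindexing by $n=r^2-1-j$ gives \eqref{eq:suzukisum}. The only real care is the arithmetic identity for $g-1$ and the floor bookkeeping near the boundary. Both are routine, so no step should be a genuine obstacle.
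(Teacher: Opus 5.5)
Your proposal is correct and is essentially the paper's proof. The paper uses the same steps: $g-1=(q^2+1)(r^2-1)=m\Delta_S(r^2-1)$ and $q^2=m\Delta_S-1$ give the exact floor $\Delta_S(r^2-1-j)$, and $m-1=2r^2-2r\ge r^2-1$ then handles the reindexing. You also expand $g-1$ explicitly, which the paper leaves implicit.
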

\begin{proof}
The identities $q=2r^2$ and \eqref{eq:genus} give
\[
g-1=(q^2+1)(r^2-1)=m\Delta_S(r^2-1).
\]
Since $q^2=m\Delta_S-1$ and $0\le j<m$, the floor in \eqref{eq:generalSuzukiCover} is exactly
\[
\left\lfloor\frac{g-1-jq^2}{m}\right\rfloor={\Delta_S}(r^2-1-j).
\]
Therefore
\[
a(X_q)=\sum_{j=0}^{m-1}\mathcal L_S(r^2-1-j).
\]
We have $m-1=2r^2-2r\ge r^2-1$, because $r\ge2$. The terms with negative argument vanish, and the remaining arguments run through $0,\ldots,r^2-1$.
\end{proof}

\subsection{Evaluation at the Skabelund degree}
The small-degree terms were computed in Lemma~\ref{lem:small}. We evaluate the remaining terms by Riemann--Roch.

\begin{proof}[Proof of Theorem~\ref{thm:main}]
Since
\[
g_S-1=(r-1){\Delta_S},
\]
equation \eqref{eq:suzukican} and Riemann--Roch imply, for every integer $n$,
\begin{equation}\label{eq:RR}
\mathcal L_S(n)={\Delta_S}(n-r+1)+\mathcal L_S(2r-2-n).
\end{equation}
Split the sum in Proposition~\ref{prop:sum} at $n=r-1$ and apply \eqref{eq:RR} to the terms $r\le n\le r^2-1$. The reflected terms have arguments $r-2,r-3,\ldots,-(r-1)^2$. Hence
\begin{equation}\label{eq:finalsum}
a(X_q)=2\sum_{n=0}^{r-2}\mathcal L_S(n)+\mathcal L_S(r-1)
+{\Delta_S}\sum_{u=1}^{r^2-r}u.
\end{equation}
By Lemma~\ref{lem:small} and the standard binomial summation identities,
\begin{align*}
\sum_{n=0}^{r-2}\mathcal L_S(n)
&=\binom{r+2}{4}+\binom{r+1}{4}
=\frac{r^2(r^2-1)}{12},\\
\mathcal L_S(r-1)&=\frac{r(r+1)(2r+1)}6.
\end{align*}
Thus \eqref{eq:finalsum} yields
\begin{equation}\label{eq:factored}
a(X_q)=\frac{{\Delta_S}(r^2-r)(r^2-r+1)}2
+\frac{r(r+1)(r^2+r+1)}6.
\end{equation}
Substituting ${\Delta_S}=2r^2+2r+1$ and expanding proves \eqref{eq:answer}. Since
\[
g=g(X_q)=4r^6-4r^4+r^2,
\]
subtracting \eqref{eq:answer} gives the asserted Cartier rank. For the bounds, write $a=a(X_q)$. Direct expansion gives
\[
3\Bigl(a-\frac g4\Bigr)=-r^4(3r-5)+r^3+\frac{r^2}4-r,
\qquad
3\Bigl(a-\frac g4+\frac g{4r}\Bigr)=r^3(5r-2)+\frac{r(r-1)}4.
\]
For $r\ge2$ one has $r^4(3r-5)\ge r^4>r^3+r^2/4$, so the first quantity is negative, while the second is positive. Dividing by $3g$ proves \eqref{eq:Skabbounds}, and the limit follows.
\end{proof}

\subsection{Exact differentials and examples}\label{sec:Skabexamples}
The argument also gives a basis of the Cartier kernel. On $S_q$ set
\[
z=y^{2r}+x^{2r+1},\qquad w=xy^{2r}+z^{2r}.
\]
These are the functions $h_1,h_2$ of \cite[Section~3.2]{FGMPW} in our coordinates. Their pole divisors are $(q+2r)Q_\infty$ and $\Delta_S Q_\infty$, respectively; those of $x,y$ are $qQ_\infty,(q+r)Q_\infty$ \cite[Lemma~3.4]{FGMPW}. Since $P_\infty$ is totally ramified over $Q_\infty$, their pullbacks to $X_q$ have these pole orders multiplied by $m$, whereas $t$ has pole order $q^2$.

\begin{corollary}\label{cor:basis}
Let $\mathcal I$ consist of the tuples $(\alpha,\beta,\gamma,\delta,j)$ satisfying
\[
\alpha\ge0,\quad \beta\in\{0,1\},\quad 0\le \gamma,\delta<r,\quad 0\le j<m,
\]
and
\[
m\bigl(\alpha q+\beta(q+r)+\gamma(q+2r)+\delta {\Delta_S}\bigr)+jq^2\le g(X_q)-1.
\]
Then a $k$-basis of $\ker\Cart$ on $\HH(X_q,\Omega_{X_q}^1)$ is
\[
\left\{x^{2\alpha}y^{2\beta}z^{2\gamma}w^{2\delta}t^{2j}\,dt:
(\alpha,\beta,\gamma,\delta,j)\in\mathcal I\right\}.
\]
\end{corollary}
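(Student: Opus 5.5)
The approach is to apply Proposition~\ref{prop:method} to $X_q$ with the exact differential $dt$. By \eqref{eq:dt}, $\divis_{X_q}(dt)=(2g-2)P_\infty$. It therefore suffices to exhibit, for each nongap $h\in H_X\cap[0,g-1]$, a function $f_h$ with pole divisor exactly $hP_\infty$, with $f_0=1$. The resulting forms $f_h^2\,dt$ are then a basis of the regular Cartier kernel. The candidate for the nongap $m(\alpha q+\beta(q+r)+\gamma(q+2r)+\delta\Delta_S)+jq^2$ is the monomial $x^{\alpha}y^{\beta}z^{\gamma}w^{\delta}t^{j}$.

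The first step is to compute pole orders and locate poles. On $S_q$, the functions $x,y,z,w$ are polynomials in the affine coordinates, so they are regular away from $Q_\infty$, with pole orders $q,q+r,q+2r,\Delta_S$ there. Since $P_\infty$ is the unique point above $Q_\infty$ and is totally ramified of index $m$ (Theorem~\ref{thm:transfer}), their pullbacks have poles only at $P_\infty$, of orders multiplied by $m$. The function $t$ satisfies $t^m=x^q+x$, so it is integral over the affine ring, regular away from $P_\infty$, and has pole order $q^2/m\cdot m/m$; more precisely $m\,v_{P_\infty}(t)=m\cdot v_{P_\infty}(x^q+x)/m$ gives pole order $q^2$. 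Hence the monomial has pole divisor $\bigl(mv+jq^2\bigr)P_\infty$, where $v=\alpha q+\beta(q+r)+\gamma(q+2r)+\delta\Delta_S$.

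The second step checks that $\mathcal I$ indexes the nongaps up to $g-1$ bijectively. By Lemma~\ref{lem:normal}, each $v\in H_S$ has a unique normal form of this shape. By the proof of Theorem~\ref{thm:transfer}, each element of $H_X$ has a unique expression $mv+jq^2$ with $v\in H_S$ and $0\le j<m$, and \eqref{eq:skabsg} says these are all the nongaps. Thus $(\alpha,\beta,\gamma,\delta,j)\mapsto mv+jq^2$ is a bijection from the tuples, without the bound, onto $H_X$. Imposing $\le g-1$ gives exactly $H_X\cap[0,g-1]$.

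The monomials then have pairwise distinct pole orders, so they are linearly independent and span $L_{X_q}((g-1)P_\infty)$. Proposition~\ref{prop:method}, or Lemma~\ref{lem:cartier} with $E=(g-1)P_\infty$, yields the basis. The main point needing care is regularity of $z,w$ on the affine Suzuki curve together with the cited pole orders. The paper takes both from \cite[Lemma~3.4]{FGMPW}, and the only remaining check is that no additional poles appear on the cover, which holds because $X_q\to S_q$ is finite.
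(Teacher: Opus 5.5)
Your proposal is correct and follows the paper's proof: the unique representation $mv+jq^2$ combined with Lemma~\ref{lem:normal} gives monomials with distinct pole orders, one for each nongap up to $g-1$, hence a basis of $L_{X_q}((g-1)P_\infty)$, and Lemma~\ref{lem:cartier} with $\divis(dt)=(2g-2)P_\infty$ finishes. The only blemish is the garbled pole-order line for $t$, which should simply read $m\,v_{P_\infty}(t)=v_{P_\infty}(x^q+x)=-mq^2$, so that $t$ has pole order $q^2$.
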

\begin{proof}
The unique representation $mv+jq^2$ of the elements of $H_X$ and Lemma~\ref{lem:normal} show that the functions $x^\alpha y^\beta z^\gamma w^\delta t^j$ indexed by $\mathcal I$ have distinct pole orders, one for each nongap at most $g-1$. They therefore form a basis of $L_{X_q}((g-1)P_\infty)$. Lemma~\ref{lem:cartier} gives the claimed basis after squaring and multiplying by $dt$.
\end{proof}

For the first values of $s$, Theorem~\ref{thm:main} gives the following table.
\begin{center}
\begin{tabular}{rrrr}
\toprule
$q_0$ & $q$ & $g(X_q)$ & $a(X_q)$\\
\midrule
2 & 8 & 196 & 46\\
4 & 32 & 15\,376 & 3\,268\\
8 & 128 & 1\,032\,256 & 232\,296\\
16 & 512 & 66\,846\,976 & 15\,773\,776\\
32 & 2048 & 4\,290\,774\,016 & 1\,040\,897\,696\\
\bottomrule
\end{tabular}
\end{center}

\begin{remark}
The middle term in \eqref{eq:finalsum} is the known Suzuki $a$-number:
\[
\mathcal L_S(r-1)=a(S_q)=\frac{r(r+1)(2r+1)}6;
\]
see \cite[Theorem 4.1]{FGMPW}. The present calculation requires the sum of Riemann--Roch dimensions in \eqref{eq:suzukisum}, rather than only this single value. The bounds \eqref{eq:Skabbounds} parallel those of \cite[Theorem~4.1]{FGMPW}, namely $1/6<a(S_q)/g(S_q)<1/6+2^{-(s+1)}$: the relative $a$-number of the Suzuki curve approaches $1/6$ from above, whereas that of its Skabelund cover approaches $1/4$ from below.
\end{remark}

\begin{remark}[Ekedahl--Oort information]\label{rem:EO}
The curves $\mathcal G_{q,n}$, $\mathcal B_{q,n}$, and $X_q$ are maximal over finite fields, hence supersingular, and so have $2$-rank $0$. As in \cite[Section~5]{FGMPW}, their $a$-numbers give partial information on the Ekedahl--Oort type $[\nu_1,\ldots,\nu_g]$ of the $2$-torsion group scheme of the Jacobian: $\nu_1=0$ and $\nu_g=g-a=\operatorname{rank}\Cart$. Since $a<g$ in every case above, none of these Jacobians is superspecial.

There is also a bound on product decompositions of the underlying abelian variety, as in \cite[Lemma~5.2]{FGMPW}. If
\[
\operatorname{Jac}(X)\simeq A_1\times\cdots\times A_t,
\qquad \dim A_i>0,
\]
is an isomorphism of abelian varieties, then each $A_i$ has $2$-rank zero and hence $a(A_i)\ge1$. Additivity of the $a$-number gives $t\le a(X)$. The isomorphism need not preserve a polarization. For comparison, a smooth connected curve's Jacobian is indecomposable as a principally polarized abelian variety.
\end{remark}

\section{Fibre products of Artin--Schreier curves}\label{sec:fibre}
Van der Geer and van der Vlugt \cite{GV95} used fibre products of Artin--Schreier curves to construct supersingular and maximal curves. We consider polynomial equations, so that infinity is the only branch point. After the general reduction, we compute the $a$-numbers of common-monomial fibre products and of a biquadratic family with two distinct pole orders. A further genus-six family has varying $a$-number and constant quadratic quotient genera and $a$-numbers.

\subsection{Polynomial Artin--Schreier fibre products}
Call a polynomial reduced if it is a sum of monomials of odd positive degree, with no constant term. Let ${\mathcal D}\subset k[x]$ be an $e$-dimensional $\F_2$-space of reduced polynomials. Assume $e\ge1$, and choose a basis $f_1,\ldots,f_e$. Let $C_{\mathcal D}$ be the smooth projective curve with function field
\begin{equation}\label{eq:fibreModel}
k(C_{\mathcal D})=k(x,y_1,\ldots,y_e),\qquad y_i^2+y_i=f_i(x).
\end{equation}
Every nonzero element of ${\mathcal D}$ has odd positive degree, so its Artin--Schreier class is nonzero. Thus $C_{\mathcal D}$ is connected, the cover of the $x$-line has degree $2^e$, and changing the chosen basis does not change the curve over the $x$-line \cite[Lemma~2.1]{GV95}. Write $d(f)=\deg f$ for $0\ne f\in {\mathcal D}$.

\begin{proposition}\label{prop:fibreGeneral}
The curve $C_{\mathcal D}$ has a unique point $P$ above infinity and is unramified elsewhere. Moreover,
\begin{equation}\label{eq:fibreGenus}
g(C_{\mathcal D})=\frac12\sum_{0\ne f\in {\mathcal D}}(d(f)-1),\qquad
\divis(dx)=(2g(C_{\mathcal D})-2)P.
\end{equation}
Consequently,
\begin{equation}\label{eq:fibreA}
a(C_{\mathcal D})=\#\bigl(H_{C_{\mathcal D}}(P)\cap[0,g(C_{\mathcal D})-1]\bigr).
\end{equation}
\end{proposition}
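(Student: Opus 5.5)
The proof treats the cover $C_{\mathcal D}\to\mathbb P^1_x$ as a Galois extension with group $(\mathbb Z/2)^e$ and handles it through its quadratic subextensions. For each nonzero $f\in\mathcal D$, the element $y_f=\sum c_iy_i$ satisfies $y_f^2+y_f=f$, where $f=\sum c_if_i$. These give the $2^e-1$ quadratic subfields, each an Artin--Schreier curve $y^2+y=f(x)$.

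The key steps, in order, are as follows.

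\textbf{Ramification.} Since $f$ is a polynomial, each such quadratic cover is unramified over the affine $x$-line. Since $d(f)$ is odd, it is totally ramified at infinity with conductor $d(f)+1$. A place of $C_{\mathcal D}$ above infinity has inertia group $I$. If $I$ were a proper subgroup, some quadratic quotient containing the fixed field of $I$ would be unramified at infinity, which is impossible. Hence $I$ is the whole group, and there is a unique point $P$ above infinity. The cover is unramified elsewhere, because every quadratic subcover is and the compositum of unramified extensions is unramified.

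\textbf{Genus.} The genus formula follows from the standard formula for elementary abelian $2$-extensions. The different exponent at $P$ equals $\sum_{\chi\ne1}\operatorname{cond}(\chi)$ by the conductor--discriminant formula; each nontrivial character corresponds to one nonzero $f\in\mathcal D$ and contributes $d(f)+1$. Riemann--Hurwitz then gives
\[
2g-2=-2\cdot2^e+\sum_{f\ne0}(d(f)+1)=\sum_{f\ne0}(d(f)-1)-2,
\]
which is \eqref{eq:fibreGenus}. Alternatively, one can cite Kani--Rosen or \cite{GV95}: the genus is the sum of the genera $(d(f)-1)/2$ of the quadratic quotients.

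\textbf{Differential and $a$-number.} Apply Proposition~\ref{prop:wildpreserve} with $Y=\mathbb P^1$, $t=x$, and $Q=\infty$, noting that $\divis(dx)=-2\infty=(2g_Y-2)Q$. The map is separable, unramified away from infinity, and has a unique point above infinity. This gives $\divis(dx)=(2g-2)P$ and the formula \eqref{eq:fibreA}. The genus-zero case is immediate.

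The main obstacle is the conductor bookkeeping at $P$: showing that the different exponent is exactly $\sum_f(d(f)+1)$ requires the conductor--discriminant formula for the wild abelian extension. It is cleanest to cite it, or the genus formula of \cite{GV95}, rather than compute higher ramification groups directly.
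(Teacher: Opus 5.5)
Your proposal is correct and follows essentially the same route as the paper: both use the inertia argument (a nontrivial character vanishing on a proper inertia group gives a quadratic subextension unramified at infinity), both get the genus from conductors $d(f)+1$, and both apply Proposition~\ref{prop:wildpreserve} to $x$ to obtain $\divis(dx)=(2g-2)P$ and then the exact-differential count. The only difference is emphasis for the genus: the paper cites \cite[Corollary~2.6]{GV95} first and the conductor--different formula as the alternative, while you do the reverse. One small wording slip: the quadratic subfield cut out by a character $\chi$ with $I\subseteq\ker\chi$ lies inside the fixed field of $I$, not the other way round.
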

\begin{proof}
The affine cover in \eqref{eq:fibreModel} is finite \'etale over the affine line, since the derivative of each equation in its own $y_i$ is $1$. Every quadratic subextension has equation $y^2+y=f(x)$ for some $0\ne f\in {\mathcal D}$, hence is ramified at infinity. If the inertia group at infinity were a proper subgroup of $(\mathbb Z/2\mathbb Z)^e$, a nonzero character vanishing on it would give a quadratic subextension unramified at infinity. This contradiction proves total ramification and uniqueness of $P$.

The genus identity is \cite[Corollary~2.6]{GV95}, applied to the quadratic quotients, each of genus $(d(f)-1)/2$. Alternatively it follows from the conductor--different formula, whose nontrivial characters have conductors $d(f)+1$. Proposition~\ref{prop:wildpreserve}, applied to the cover of $\mathbb P^1$, gives the differential divisor, and Lemma~\ref{lem:cartier} proves the count. The same convention gives zero when the genus is zero.
\end{proof}

Over finite fields, \cite[Proposition~2.5]{GV95} gives an isogeny
\[
\operatorname{Jac}(C_{\mathcal D})\sim
\prod_{0\ne f\in {\mathcal D}}\operatorname{Jac}(C_f),
\qquad C_f:\ y^2+y=f(x).
\]
The $a$-number is not an isogeny invariant. In Proposition~\ref{ex:fibreVariation}, all three quadratic quotients have genus two and $a$-number one, while the $a$-number of the fibre product varies.

\subsection{Common-monomial fibre products}
\begin{proposition}\label{prop:commonMonomial}
Let $A\subset k$ be an $e$-dimensional $\F_2$-space, let $M\ge1$ be odd, and let ${\mathcal D}=A x^M$. Put ${Q_A}=2^e$. Then
\[
g(C_{\mathcal D})=\frac{({Q_A}-1)(M-1)}2,\qquad
H_{C_{\mathcal D}}(P)=\langle {Q_A},M\rangle,
\]
and
\begin{equation}\label{eq:commonMonomialA}
a(C_{\mathcal D})=\sum_{j=0}^{{Q_A}/2-1}
\left\lfloor\frac{(2j+1)M+{Q_A}-1}{2{Q_A}}\right\rfloor.
\end{equation}
In particular, the answer is independent of the coefficient space $A$ once $e$ and $M$ are fixed.
\end{proposition}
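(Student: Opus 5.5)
The plan is to realise $C_{\mathcal D}$ as a single additive cover $y^{Q_A}+y=F(x)$ up to scaling, and then to import Proposition~\ref{prop:additive}. If that identification proves awkward, I will redo its counting argument directly, with Proposition~\ref{prop:fibreGeneral} supplying the geometry.

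\emph{Genus.} Every nonzero element of $\mathcal D=Ax^M$ has degree exactly $M$. The formula \eqref{eq:fibreGenus} then gives at once $g=(Q_A-1)(M-1)/2$.

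\emph{Semigroup.} Since $P$ is totally ramified of degree $Q_A$ over infinity by Proposition~\ref{prop:fibreGeneral}, the function $x$ has pole order $Q_A$. I also need a function of pole order $M$. Let $L_A(T)=\prod_{\alpha\in A'}(T-\alpha)$ be the additive polynomial whose roots are the dual space $A'=\{\beta:\operatorname{Tr}\ldots\}$; concretely, I would instead use the standard fact that the compositum of the $y^2+y=a_ix^M$ is the curve $w^{Q_A}+w=c\,x^M$ after a linear change of variables. A cleaner route that avoids identifying the curve is to build the function explicitly. Let $\ell(T)$ be the $\F_2$-linear polynomial of degree $Q_A$ with $\ell(a)=0$ exactly for $a\in A$; this is separable because $A$ is an $\F_2$-space. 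I expect that a suitable $k$-linear combination $w=\sum c_iy_i$ with $c_i\in k$ has pole order $M$ at $P$. Each $y_i$ has pole order $Q_AM/2$, since each $y_i$ has order $M$ on its quadratic quotient and ramification there is $Q_A/2$. Cancellation of leading terms in a combination of the $y_i$ should lower this valuation down to $M$, in the spirit of the Hermitian trick $y_1=$ a root of $w^{Q_A}+w=x^M$.

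So I would prove directly that the field $k(x,w)$ with $w^{Q_A}+w=x^M$ contains, for every $a\in A$, a root of $y^2+y=ax^M$. Write $a=\lambda^{Q_A}$ with $\lambda^{Q_A}$ chosen appropriately; the element $y_a=\sum_{i}(\lambda w)^{2^i}$, suitably normalised via a linear polynomial relation, should work. Comparing degrees, both fields have degree $Q_A$ over $k(x)$, so the two curves coincide. This gives $C_{\mathcal D}\cong Y_F$ with $q=Q_A$ and $F$ a scalar multiple of $x^M$.

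\emph{Conclusion.} Once $C_{\mathcal D}\cong Y_F$, Proposition~\ref{prop:additive} yields $H=\langle Q_A,M\rangle$ and formula \eqref{eq:commonMonomialA} directly, including independence of $A$. The main obstacle is this identification of the fibre product with a single additive curve for an arbitrary $\F_2$-space $A$. I would handle it using the fact that the $\F_2$-dual of $A$ under the trace-type pairing $(a,\lambda)\mapsto$ the coefficient in $\ell$ is realised by an additive polynomial of degree $Q_A$.

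As a fallback, I would argue directly on $C_{\mathcal D}$ without the identification. Here I only need some function of pole order $M$ at $P$. Then $\langle Q_A,M\rangle\subseteq H$, and since both have genus $(Q_A-1)(M-1)/2$ they are equal. The Apéry computation of Proposition~\ref{prop:additive}, combined with \eqref{eq:fibreA}, then gives the formula verbatim.
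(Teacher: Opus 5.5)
Your main route has a genuine gap: the identification $C_{\mathcal D}\cong Y_F$ with $F$ a scalar multiple of $x^M$ is false for general $A$.

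\textbf{Why the identification fails.} The field $k(x,w)$ with $w^{Q_A}+w=cx^M$ has exactly $Q_A-1$ quadratic subextensions over $k(x)$, namely $z^2+z=\mu c\,x^M$ for $\mu\in\F_{Q_A}^{\times}$. Your elements $\sum_i(\mu w)^{2^i}$ satisfy $z^2+z=\mu^{Q_A}w^{Q_A}+\mu w$, and this equals $\mu c x^M$ only when $\mu^{Q_A}=\mu$. Distinct reduced polynomials have distinct Artin--Schreier classes. So $k(x,w)$ contains a root of $y^2+y=ax^M$ for every $a\in A$ only when $A=c\,\F_{Q_A}$, and rescaling $x$ only changes $c$.

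Already for $A=\langle1,\alpha\rangle$ with $\alpha\notin\F_4$ this fails. Take $a_1=1$, $a_2=\alpha$, $\beta^2=\alpha$. Then $t=y_2+\beta y_1$ satisfies $t^2+t=(\alpha+\beta)y_1$, so
\[
x^M=\frac{(t^2+t)^2}{(\alpha+\beta)^2}+\frac{t^2+t}{\alpha+\beta}.
\]
This is an additive polynomial in $t$ whose root space $\{0,1,\beta,1+\beta\}$ is not a scalar multiple of $\F_4$. So $C_{\mathcal D}$ is not of the form $y^{Q_A}+y=cx^M$ over the $x$-line, and Proposition~\ref{prop:additive} cannot be imported as stated.

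\textbf{What the fallback is missing.} Your fallback isolates the right point. Once some function has pole order $M$ at $P$, the genus comparison and the Ap\'ery count via \eqref{eq:fibreA} and $\divis(dx)=(2g-2)P$ go through verbatim with $Q_A$ in place of $q$. But you never construct such a function; ``cancellation of leading terms should lower the valuation to $M$'' is the entire content of the proposition. Your guess that a $k$-linear combination of the $y_i$ works is in fact correct (the example above shows it), but it is unproved, and the heuristic you give for it is the false identification.

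The missing idea, as in the paper, is an auxiliary genus-zero cover.
\begin{itemize}
\item The curve $u_i^2+u_i=a_is$ over the $s$-line has genus $0$ by \eqref{eq:fibreGenus} and a unique point over infinity.
\item Hence its function field is $k(t)$, with $s$ a polynomial of degree $Q_A$ in $t$ and $t$ integral over $k[s]$.
\item The substitution $s\mapsto x^M$, $u_i\mapsto y_i$ embeds $k(t)$ into $k(C_{\mathcal D})$, and $t$ is regular away from $P$.
\item Comparing pole orders in $s(t)=x^M$ gives $Q_A$ times the pole order of $t$ equal to $Q_AM$, so $t$ has pole order $M$.
\end{itemize}
The paper also shows that $s(t)$ is additive. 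This gives the model $\mathcal L(t)=x^M$, which generalises $Y_F$ to arbitrary $A$.
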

\begin{proof}
The genus follows from \eqref{eq:fibreGenus}, since every nonzero element of ${\mathcal D}$ has degree $M$. To find a function of pole order $M$, first consider the auxiliary cover of the $s$-line with equations $u_i^2+u_i=a_i s$, where the $a_i$ form a basis of $A$. Proposition~\ref{prop:fibreGeneral} gives genus zero and a unique point at infinity. Choose a rational coordinate $t$ with a simple pole there. Then $s={\mathcal L}(t)$ is a polynomial of degree ${Q_A}$, and every $u_i$ is a polynomial in $t$.

The deck group fixes infinity, so it acts on $t$ by affine transformations. Each element has order dividing two; in characteristic two its linear coefficient must be one. Hence this group is a translation group $V\subset k$ of order ${Q_A}$. A fibre of $s$ shows that ${\mathcal L}(t)$ is, after translating $t$ and adjusting its constant term, a scalar multiple of $\prod_{v\in V}(t-v)$. This polynomial is additive and has nonzero linear coefficient. Pulling back by $s=x^M$ therefore gives a model
\[
{\mathcal L}(t)=x^M,\qquad \deg {\mathcal L}={Q_A}.
\]
This pullback is connected: in a field generated by one root $t$, let $e_P$ be the ramification index of a point above infinity and let $b$ be the pole order of $t$ there. Comparing pole orders in ${\mathcal L}(t)=x^M$ gives $Q_A b=e_P M$, so $Q_A\mid e_P$ because $(Q_A,M)=1$. Since $e_P\le[k(x,t):k(x)]\le Q_A$, the extension has degree $Q_A$ and the defining polynomial is irreducible. It has the same function field as \eqref{eq:fibreModel}. The pole orders of $x,t$ are ${Q_A},M$, so $\langle {Q_A},M\rangle\subset H(P)$. Their genera agree, proving equality. Formula~\eqref{eq:commonMonomialA} is now the same Ap\'ery count used in Proposition~\ref{prop:additive}, with ${Q_A}$ in place of $q$.
\end{proof}

For $M>1$, the model $x^M={\mathcal L}(t)$ also realizes $C_{\mathcal D}$ as a tame cyclic cover of the $t$-line of degree $M$. Since ${\mathcal L}$ is separable of degree ${Q_A}$, its branch exponents are one at each of its ${Q_A}$ roots and $-{Q_A}\bmod M$ at infinity. Thus Elkin's formula \eqref{eq:Elkinrank} also applies to this subfamily and gives a second explanation of coefficient independence. The semigroup proof above directly evaluates the invariant as \eqref{eq:commonMonomialA}; the case $M=1$ is rational.

The common-monomial maximal curves constructed in \cite[Theorem~3.3]{GV95} have $M=2^h+1$, so the preceding proposition computes their $a$-numbers for every rank $e$ allowed by that construction. In particular,
\begin{equation}\label{eq:fibreSpecial}
a(C_{\mathcal D})=
\begin{cases}
2^{h+e-3},&h>e,\\[2pt]
2^e(2^e+2)/8,&h=e.
\end{cases}
\end{equation}
For $h<e$, formula~\eqref{eq:commonMonomialA} remains valid without any further assumption. These specializations follow from Corollary~\ref{cor:additiveprogressions}: for $h>e$ its parameter $\lambda=2^{h-e}$ is even, and for $h=e$ it equals one. Maximality here is supplied by the construction in \cite{GV95}.

\begin{center}
\begin{tabular}{rrrr}
\toprule
$e$ & $h$ & genus for $M=2^h+1$ & $a(C_{\mathcal D})$\\
\midrule
1 & 2 & 2 & 1\\
2 & 1 & 3 & 1\\
2 & 2 & 6 & 3\\
2 & 3 & 12 & 4\\
3 & 2 & 14 & 5\\
3 & 3 & 28 & 10\\
3 & 4 & 56 & 16\\
\bottomrule
\end{tabular}
\end{center}

\subsection{Two distinct monomial pole orders}
\begin{proposition}\label{prop:biquadratic}
Let $d<e$ be odd positive integers, and let $C_{d,e}$ be the smooth projective model of
\[
y^2+y=x^d,\qquad z^2+z=x^e.
\]
Then
\[
g=\frac{d+2e-3}{2},\qquad
H(P)=\langle4,2d,2e-d\rangle.
\]
With $B=g-1$, its $a$-number is
\begin{equation}\label{eq:biquadraticA}
a(C_{d,e})=
\sum_{w\in\{0,\,2d,\,2e-d,\,2e+d\}}
\max\left\{0,1+\left\lfloor\frac{B-w}{4}\right\rfloor\right\}.
\end{equation}
\end{proposition}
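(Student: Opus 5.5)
This is the case $\mathcal D=\langle x^d,x^e\rangle$ of Proposition~\ref{prop:fibreGeneral}. The plan is to get the genus, the differential divisor and the $a$-number formula from the general results, and to find the semigroup by exhibiting three functions and matching genera.

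\textbf{Genus and differential.} The nonzero elements of $\mathcal D$ are $x^d$, $x^e$ and $x^d+x^e$, of degrees $d,e,e$. Formula~\eqref{eq:fibreGenus} then gives
\[
g=\tfrac12\bigl((d-1)+2(e-1)\bigr)=\tfrac{d+2e-3}{2},
\]
and also $\divis(dx)=(2g-2)P$. Once the semigroup is known, \eqref{eq:fibreA} and the Ap\'ery count \eqref{eq:algorithm} with $e=4$ give \eqref{eq:biquadraticA}. The main work is therefore to identify $H(P)$ and its Ap\'ery set modulo $4$.

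\textbf{Functions with small poles.} Since $P$ is totally ramified of index $4$ over infinity, $x$ has pole order $4$ and $y$ has pole order $2d$. The function $z$ has pole order $2e$, which is useless. Write $e=d+2c$ with $c\ge1$ and set
\[
w=z+x^{c}y .
\]
Then
\[
w^2+w=x^e+x^{2c}(y^2+y)+x^{2c}y^2+x^cy=x^{2c}y^2+x^cy
\]
modulo corrections, and the $x^e$ terms cancel. A direct computation gives
\[
w^2+w=x^e+x^{2c}y^2+x^cy+x^{2c}(y^2+y)+\cdots,
\]
so it is cleaner to estimate valuations. We have $v_P(z)=-2e$ and $v_P(x^cy)=-4c-2d=-2e$, so the leading poles can cancel after scaling. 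I would verify this using the local expansions $y\sim x^{d/2}$ and $z\sim x^{e/2}$, i.e.\ $y=x^{d/2}+\dots$ in a suitable Puiseux sense. Then $z+x^cy$ has expansion beginning $x^{e/2}(1+1)+\dots$, whose next term should have order $x^{(e-d/2)/2}$. This corresponds to pole order $2e-d$. Concretely, one computes
\[
(z+x^cy)^2+(z+x^cy)=x^{2c}y+x^cy,
\]
whose pole order is $4c+2d=2e+2d-2d\cdot$\dots. The Artin--Schreier reduction of $x^{2c}y$ to an odd-order pole should yield $2(2e-d)$, hence pole order $2e-d$ for $w$ after possibly subtracting a square-root term.

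\textbf{Main obstacle: the function of pole order $2e-d$.} I expect this step to be the hardest, and I would handle it by explicit valuation bookkeeping: find a function $u$, a polynomial in $x,y$, with $u^2+u\equiv x^{2c}y\pmod{\wp}$ reduced to odd pole order $2(2e-d)$, and then take $w=z+x^cy+u$.

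\textbf{Equality of semigroups and the count.} With $4,2d,2e-d\in H(P)$ in hand, I would compute the Ap\'ery set of $\langle4,2d,2e-d\rangle$ modulo $4$. Since $2d\equiv2$ and $2e-d$ is odd, it should be $\{0,\,2d,\,2e-d,\,2e+d\}$, with $2e+d=2d+(2e-d)$ in the class $3$ or $1$ complementary to $2e-d$. One must check that no smaller representative exists, using $2(2e-d)=4(e-d)+2d$. The Ap\'ery genus formula then gives
\[
\tfrac14\bigl(2d+(2e-d)+(2e+d)\bigr)-\tfrac32=\tfrac{4e+2d-6}{4}=g.
\]
So the contained semigroup has the full genus $g$ and equals $H(P)$.
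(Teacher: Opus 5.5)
Your overall plan is the paper's own. You take the genus and $\divis(dx)=(2g-2)P$ from Proposition~\ref{prop:fibreGeneral}, use the function $w=z+x^cy$ with $c=(e-d)/2$, reduce to the Ap\'ery set $\{0,2d,2e-d,2e+d\}$ via $2(2e-d)=2d+4(e-d)$ and $2(2d)=4d$, and finish with the genus comparison.

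However, the step you call the main obstacle, showing $2e-d\in H(P)$, is never carried out, and the plan you give for it goes wrong. You do write the correct identity
\[
w^2+w=x^{2c}y+x^cy .
\]
It follows from $w^2+w=x^e+x^{2c}(y+x^d)+x^cy$ and $2c+d=e$. But you then give the right side pole order $4c+2d$. That is the pole order $2e$ of the smaller term $x^cy$, not of the dominant term. Because of this you look for an extra Artin--Schreier correction $u$, which is unnecessary and incoherent as stated. First, $2(2e-d)$ is even, so it is not an ``odd-order pole''. Second, a polynomial $u$ in $x,y$ has pole order in $\langle4,2d\rangle\subset2\mathbb Z$. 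Adding such a $u$ either leaves the pole order of $z+x^cy$ unchanged or replaces it by an even one, so it can never supply the odd pole order you need.

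The missing observation is a one-line valuation count:
\begin{itemize}
\item The term $x^{2c}y$ has pole order $8c+2d=4e-2d$. This strictly exceeds the pole order $4c+2d=2e$ of $x^cy$, so $w^2+w$ has pole order exactly $2(2e-d)$ at $P$.
\item Hence $w$ must have a pole at $P$, and then $v_P(w^2+w)=2v_P(w)$ gives pole order exactly $2e-d$.
\item Since $w$ is regular on the affine curve, $2e-d\in H(P)$.
\end{itemize}
No reduction step is needed; reducing to an odd pole is how one computes conductors, not the pole order of $w$ itself. Your Puiseux heuristic predicts the right value, but it is not a proof. With the valuation count in place, the rest of your argument goes through as written and coincides with the paper's proof.
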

\begin{proof}
The three quadratic quotients have pole orders $d,e,e$, so Proposition~\ref{prop:fibreGeneral} gives the genus and a unique point $P$ at infinity. Put $t=(e-d)/2$ and $w=z+x^t y$. Then
\[
w^2+w=(x^{2t}+x^t)y.
\]
Since $x$ and $y$ have pole orders $4,2d$, respectively, the right side has pole order $8t+2d$. Thus $w$ has pole order $4t+d=2e-d$. We obtain $H'=\langle4,2d,2e-d\rangle\subset H(P)$.

The Ap\'ery representatives of $H'$ modulo $4$ are
\[
0,\qquad 2d,\qquad 2e-d,\qquad 2e+d.
\]
To check this, put $c=2e-d$, which is odd and greater than $d$. The relation $2c=2d+4(e-d)$ reduces its coefficient modulo two, and $2(2d)=4d$ reduces the coefficient of $2d$ modulo two. The four resulting representatives have distinct residues modulo four. Their genus is
\[
\frac{2d+(2e-d)+(2e+d)}4-\frac32
=\frac{d+2e-3}{2}=g.
\]
Therefore $H'=H(P)$, and the exact-differential count gives \eqref{eq:biquadraticA}.
\end{proof}

When $d=2^u+1$ and $e=2^v+1$ with $1\le u<v$, these are supersingular fibre products of the type in \cite[Proposition~3.1]{GV95}. For instance, $(d,e)=(3,5)$ gives genus $5$, semigroup $\langle4,6,7\rangle$, and $a=2$.

\subsection{Coefficient dependence at fixed quotient data}
\begin{proposition}\label{ex:fibreVariation}
Fix $\alpha\in\F_4\setminus\F_2$, so $\alpha^2+\alpha+1=0$. For $\lambda\in k$, let $C_\lambda$ be the smooth projective fibre product
\begin{equation}\label{eq:fibreVariation}
y^2+y=x^5,\qquad
z^2+z=\alpha x^5+\lambda x^3.
\end{equation}
Every $C_\lambda$ has genus $6$ and is supersingular. Its $a$-number is
\begin{equation}\label{eq:fibreJump}
a(C_\lambda)=
\begin{cases}
3,&\lambda=0,\\
2,&\lambda\ne0.
\end{cases}
\end{equation}
For all $\lambda$, the three quadratic quotients have genus two and $a$-number one.
\end{proposition}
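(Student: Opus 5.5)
The genus, supersingularity and quotient statements follow from the general results. The $a$-number I would compute with Lemma~\ref{lem:cartier}, as $a(C_\lambda)=\ell(5P)=\#(H(P)\cap[0,5])$.

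\textbf{Genus and quotients.} The space ${\mathcal D}$ is spanned over $\F_2$ by $x^5$ and $\alpha x^5+\lambda x^3$. Its nonzero elements are
\[
x^5,\qquad \alpha x^5+\lambda x^3,\qquad \alpha^2x^5+\lambda x^3,
\]
all reduced of degree $5$. Proposition~\ref{prop:fibreGeneral} then gives a unique point $P$ over infinity, genus $\tfrac12(4+4+4)=6$, and $\divis(dx)=10P$. Each quotient $y^2+y=f$ with $\deg f=5$ is covered by Proposition~\ref{prop:additive} with $q=2$, $M=5$: it has genus two and $a$-number $\lfloor 6/4\rfloor=1$. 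Each $f$ lies in the span of $x^{2^i+1}$, so by van der Geer--van der Vlugt \cite{GV95} these quotients are supersingular. The isogeny $\operatorname{Jac}(C_\lambda)\sim\prod_f\operatorname{Jac}(C_f)$ then gives supersingularity of $C_\lambda$.

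\textbf{The case $\lambda=0$.} Here ${\mathcal D}=\F_4x^5$ is common-monomial with $e=2$, $M=5$. Proposition~\ref{prop:commonMonomial} gives $H(P)=\langle4,5\rangle$ and
\[
a=\lfloor 8/8\rfloor+\lfloor 18/8\rfloor=3.
\]
Concretely, $w=z+\alpha y+y$ satisfies $w^2+w=y+\lambda x^3$. For $\lambda=0$ a root $v$ of $v^2+v=y$ has pole order $5$, which exhibits the relevant element of $L(5P)$.

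\textbf{The case $\lambda\ne0$.} The pole orders $1,2,3$ must be excluded, and $5$ must be shown to be a gap. Pole order $3$ is impossible, since $\langle3,4\rangle$ already has genus $3<6$; likewise $1$ is impossible. If $5\in H(P)$, then $H(P)\supseteq\langle4,5\rangle$, and equality of genera forces $H(P)=\langle4,5\rangle$. The main obstacle is then to derive a contradiction with $\lambda\ne0$, and also to exclude $2\in H(P)$.

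\textbf{Main step: computing $L(5P)$.} I would do this by explicit computation in the integral basis. The affine cover is étale, so the affine coordinate ring is $k[x,y,z]$. Every function regular away from $P$ is $h=A+By+Cz+Dyz$ with $A,B,C,D\in k[x]$. Write $w=z+(1+\alpha)y$. Then
\[
w^2+w=y+\lambda x^3 ,
\]
and $h$ can be rewritten as $A+By+Cw+Dyw$. I would compute the leading Puiseux-type expansions of $y$ and $w$ at $P$ in a uniformizer. Here $y$ has pole order $10$. Since $v_P(\lambda x^3)=-12$ dominates, $w$ has pole order $6$ when $\lambda\ne0$.

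One must then show that no combination with bounded polynomial degrees cancels down to pole order $\le5$, other than $1$ and $x$. This amounts to a small linear-algebra check, using that $\lambda x^3$ cannot be absorbed by an Artin--Schreier substitution $w\mapsto w+c\,x^iy^j$. That absorption is exactly what fails for $\lambda\ne0$ but succeeds for $\lambda=0$.

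As a cross-check, I would verify that $\{4,6,9,11\}\cap H(P)$ yields a genus-$6$ semigroup whose Apéry sum with respect to $4$ is $30$. This gives $H(P)\cap[0,5]=\{0,4\}$ and hence $a(C_\lambda)=2$.

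If the expansion argument becomes unwieldy, I would instead write the Cartier matrix directly. The six-dimensional space $\HH(\Omega)=L(10P)\,dx$ has a basis of the form $h\,dx$ with $h\in\{1,x,x^2,y,w,\ast\}$. Using $\Cart(h\,dx)$ on the monomial expressions, one checks that the kernel has dimension $3$ exactly when $\lambda=0$.
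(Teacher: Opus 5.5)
There is a genuine gap, and it is in the case $\lambda\ne0$, which is the real content of the statement. Your handling of the genus, the quadratic quotients, supersingularity, and the case $\lambda=0$ is fine. For $\lambda=0$ you invoke Proposition~\ref{prop:commonMonomial} with coefficient space $\F_4$, whereas the paper eliminates $y$ to get $w^4+w=x^5$; both give $H(P)=\langle4,5\rangle$ and $a=3$.

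For $\lambda\ne0$ you reduce to showing $H(P)\cap[0,5]=\{0,4\}$, call this the main obstacle, and leave it to an unexecuted ``small linear-algebra check'' on $A+By+Cw+Dyw$. That check is not mere leading-term bookkeeping. The terms $By$ and $Cw$ both have pole orders $\equiv2\pmod 4$, and $A$ and $Dyw$ both have pole orders $\equiv0\pmod4$, so leading terms can cancel, and they do. The missing idea is exactly this cancellation: choose $c$ with $c^2\lambda=1$ and put $u=y+cxw$. Then $u^2=y+c^2x^2w+c^2x^2y$ has pole order $18$, so $u$ has pole order $9$. Hence $\langle4,6,9\rangle\subseteq H(P)$. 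Its Ap\'ery set modulo $4$ is $\{0,6,9,15\}$, so the $11$ in your cross-check is actually a gap. Its genus is $6$, so $H(P)=\langle4,6,9\rangle$, which settles the gaps $1,2,3,5$ at once.

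Without an odd nongap below $13$ you cannot exclude $2$: the hyperelliptic semigroup $\langle2,13\rangle$ is compatible with every nongap you know ($4,6,8,10,\dots$). Excluding $5$, by contrast, needs no work, since $6\notin\langle4,5\rangle$ while $w$ has pole order $6$.

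Your fallback via the Cartier operator would in fact give a complete proof, even without $u$, if you take the missing sixth basis element to be $xw$. The functions $1,x,x^2,y,w,xw$ lie in $L(10P)$ and are $k$-independent because $1,y,w,yw$ is a $k(x)$-basis. Writing $w=(w+y)^2+x(x^2+\rho x)^2$ with $\rho^2=\lambda$ gives $\Cart(w\,dx)=(x^2+\rho x)\,dx$ and $\Cart(xw\,dx)=(w+y)\,dx$. Together with $\Cart(x\,dx)=dx$, $\Cart(y\,dx)=x^2\,dx$, and $\Cart(dx)=\Cart(x^2\,dx)=0$, the image of $\Cart$ has dimension $4$ if $\lambda\ne0$ and $3$ if $\lambda=0$, so $a=2$ or $3$ respectively. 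As submitted, however, the basis element is left as ``$\ast$'' and the kernel dimension is only asserted, so the value $a=2$ for $\lambda\ne0$ is not established by your proposal.
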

\begin{proof}
All three nonzero combinations of the right sides in \eqref{eq:fibreVariation} have degree five. Proposition~\ref{prop:fibreGeneral} gives $g=6$, total ramification at the unique point $P$ above infinity, and $\divis(dx)=10P$. Every quadratic quotient has $a=\lfloor(5+1)/4\rfloor=1$ by Proposition~\ref{prop:additive} with $q=2$. They are supersingular curves of the form $y^2+y=xR(x)$; \cite[Proposition~3.1]{GV95} gives supersingularity of their fibre product. Since every $\lambda\in k$ belongs to a finite field, the finite-field statement applies to each member.

Set $w=z+\alpha^2y$. Then
\[
w^2+w=y+\lambda x^3.
\]
If $\lambda=0$, elimination of $y$ gives $w^4+w=x^5$. Hence $H(P)=\langle4,5\rangle$ and its nongaps up to $g-1=5$ are $0,4,5$, giving $a=3$.

Suppose $\lambda\ne0$. The pole orders of $x,y$ are $4,10$. The equation for $w$ therefore gives pole order $6$ for $w$. Choose $c\in k$ with $c^2\lambda=1$ and put $u=y+cxw$. Then
\[
u^2=y+c^2x^2w+c^2x^2y.
\]
The last summand has pole order $18$, strictly larger than those of the other two summands, so $u$ has pole order $9$. Thus $\langle4,6,9\rangle\subset H(P)$. Its Ap\'ery set modulo four is $\{0,6,9,15\}$, of genus $6$, so equality follows. Its only nongaps up to five are $0,4$, proving $a=2$.
\end{proof}

Both strata in \eqref{eq:fibreJump} have the same three quadratic quotient genera and $a$-numbers, and the same upper ramification break $5$ for every nontrivial character. Nevertheless, the $a$-number of the fibre product changes. In particular, one cannot recover it by summing the quotient $a$-numbers or by using only their pole orders.

For more general spaces of polynomials $xR(x)+ax$ in \cite[Section~3]{GV95}, one must first account for a possible loss of rank under Artin--Schreier reduction. Let ${\mathcal D}_0\subset k[x]$ be an $e$-dimensional $\F_2$-space, put $\wp(h)=h^2+h$, and let $\overline {\mathcal D}_0$ be its image in $k(x)/\wp(k(x))$, of dimension $e'$. Choose reduced polynomial representatives for a basis of $\overline {\mathcal D}_0$. If $e'>0$, Proposition~\ref{prop:fibreGeneral} applies to the connected cover defined by this basis, with covering degree $2^{e'}$. After a change of basis and translations of the $y_i$, the other $e-e'$ equations become $u_i^2+u_i=0$. Over $k$, the original fibre product therefore has $2^{e-e'}$ components, whose smooth projective models are isomorphic to that connected cover. If $e'=0$, all components are rational. The genus and semigroup formulas must use the reduced rank and a connected component.

For example,
\[
y_1^2+y_1=x^3,\qquad y_2^2+y_2=x^3+x^2+x
\]
has $e=2$ and $e'=1$: setting $u=y_1+y_2+x$ gives $u^2+u=0$. It consists of two copies of the genus-one curve $y_1^2+y_1=x^3$. The pole semigroup must then be computed on a connected component.

\subsection{Ekedahl--Oort information}
\begin{remark}[Supersingularity and the final type]\label{rem:fibreEO}
Let $C$ be a supersingular member of the families above, of genus $g\ge1$, and put $a=a(C)$. Supersingularity fixes its Newton polygon, whose slopes are all $1/2$, but does not determine the Ekedahl--Oort type of $\operatorname{Jac}(C)[2]$. Write this type as the final sequence $\nu=(\nu_1,\ldots,\nu_g)$, with $\nu_0=0$. The standard relations are
\[
\nu_i\le\nu_{i+1}\le\nu_i+1,\qquad
f=\max\{i:\nu_i=i\},\qquad a=g-\nu_g,
\]
where $f$ is the $2$-rank; see \cite[Section~2.1.5]{PW}. Since $f=0$, our $a$-number formulas imply
\begin{equation}\label{eq:EObounds}
\nu_1=0,\qquad \nu_g=g-a,\qquad
\max\{0,i-a\}\le\nu_i\le\min\{i-1,g-a\}
\quad(1\le i\le g).
\end{equation}
These bounds apply to the supersingular subfamilies above. The general curves in Proposition~\ref{prop:fibreGeneral} need not be supersingular.

More generally, let $\eta_j=\operatorname{rank}(\Cart^j)$ and $\eta_0=g$. The final filtration contains the successive Cartier images of $H^0(C,\Omega_C^1)$, and hence
\begin{equation}\label{eq:EOiterates}
\nu_{\eta_j}=\eta_{j+1}\qquad\text{whenever }\eta_j>0.
\end{equation}
Indeed, $H^0(C,\Omega_C^1)=V H^1_{\mathrm{dR}}(C)$, and $V$ restricts to Cartier on this subspace. The iterates of $V$ belong to the canonical filtration underlying the final type. Even the entire sequence $(\eta_j)$ need not determine $\nu$; see \cite[Section~2.2.3]{PW}. For the genus-six family, we compute the additional de Rham data in the next proposition.
\end{remark}

\begin{proposition}[Ekedahl--Oort types of the genus-six family]\label{prop:fibreEO}
For the family $C_\lambda$ in Proposition~\ref{ex:fibreVariation}, the member at $\lambda=0$ has
\begin{equation}\label{eq:EOzero}
\operatorname{EO}(\operatorname{Jac}(C_0))=(0,0,0,1,2,3).
\end{equation}
For every $\lambda\ne0$, the ranks of the positive Cartier iterates are
\begin{equation}\label{eq:EOCartierRanks}
(\eta_1,\eta_2,\eta_3,\eta_4)=(4,2,1,0),
\end{equation}
and
\begin{equation}\label{eq:EOgeneric}
\operatorname{EO}(\operatorname{Jac}(C_\lambda))=(0,1,1,2,3,4).
\end{equation}
In particular, the family has constant supersingular Newton polygon, while its Ekedahl--Oort type is constant on $\lambda\ne0$ and changes at $\lambda=0$.
\end{proposition}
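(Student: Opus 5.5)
The statement reduces to linear algebra on explicit bases: the Cartier operator on $\HH(C_\lambda,\Omega^1)$, and for $\lambda\ne0$ one additional piece of the canonical filtration in $H^1_{\mathrm{dR}}(C_\lambda)$. By Proposition~\ref{prop:fibreGeneral}, $\divis(dx)=10P$, so a basis of $\HH(C_\lambda,\Omega^1)$ is $h\,dx$ with $h$ ranging over monomials in $x,w,y,u$ whose pole orders at $P$ are the nongaps at most $10$. For $\lambda=0$, with $H(P)=\langle4,5\rangle$, these nongaps are $0,4,5,8,9,10$. For $\lambda\ne0$, with $H(P)=\langle4,6,9\rangle$, they are $0,4,6,8,9,10$. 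I will compute $\Cart(h\,dx)$ by writing $h=a^2+xb^2$, which gives $\Cart(h\,dx)=b\,dx$. The relations to use are $w^2=w+y+\lambda x^3$, $y^2=y+x^5$ and the definition $u=y+cxw$. They let me rewrite $y$, $w$ and $xw$ in the form $a^2+xb^2$, for instance $y=y^2+x\cdot x^4$. This yields the matrices of $\Cart$, $\Cart^2,\ldots$.

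For $\lambda=0$, the curve is $w^4+w=x^5$, the Hermitian curve $\mathcal H_4$. I expect $\eta_1=3$, which agrees with $a=3$, and $\eta_2=0$. Then \eqref{eq:EOiterates} gives $\nu_6=3$ and $\nu_3=\eta_2=0$. Monotonicity gives $\nu_1=\nu_2=0$. The constraint $\nu_{i+1}\le\nu_i+1$ forces $\nu_4=1$ and $\nu_5=2$, since $\nu$ must climb from $0$ to $3$ in three steps. This proves \eqref{eq:EOzero} using only Cartier data.

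For $\lambda\ne0$, the same computation should give \eqref{eq:EOCartierRanks}. Then \eqref{eq:EOiterates} yields $\nu_6=4$, $\nu_4=2$, $\nu_2=1$ and $\nu_1=0$, and the step constraint forces $\nu_5=3$. The only undetermined entry is $\nu_3\in\{1,2\}$, and the Cartier iterates alone cannot settle it; this is the main obstacle. To resolve it I will use the canonical filtration $W_\bullet$ of $H^1_{\mathrm{dR}}$. It satisfies $V(W_i)=W_{\nu_i}$, $W_{12-i}=W_i^\perp$ for the cup-product pairing, and $\nu_{12-i}=\nu_i+6-i$. Concretely:
\begin{itemize}
\item $W_4=\Cart(\HH)$ and $W_2=\Cart^2(\HH)$ are already known from the Cartier computation.
\item Since $\nu_{10}=\nu_2+4=5$, I get $W_5=V(W_{10})$ with $W_{10}=W_2^\perp$.
\item Then $W_3=V(W_5)$.
\end{itemize}
The type is then read off from $\nu_3=\dim V(W_3)$, and I expect this dimension to be $1$.

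Carrying this out requires an explicit model of $H^1_{\mathrm{dR}}$. I will use Čech classes with respect to the cover by the affine curve $U=C_\lambda\setminus\{P\}$ and a small neighbourhood of $P$. A class is represented by a triple $(\omega_U,\omega_P,f)$ with $\omega_U-\omega_P=df$. On such classes, $V$ acts by Cartier on both differentials and by $f\mapsto\bigl(\text{the }x^{-1}\text{-odd part}\bigr)^{1/2}$-type formulas on the function. The cup pairing is a residue at $P$. The six non-holomorphic classes can be taken with $f$ ranging over the gap monomials $x^{-1}h$ at $P$. Computing the pairing of $W_2$ against them, applying $V$ twice, and taking a final rank are finite calculations with Laurent expansions at $P$ in a uniformizer. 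These expansions can be obtained from the pole orders $4,6,9,10$ of $x,w,u,y$ and the relation $x\,u$-type products. They are the laborious part of the argument.
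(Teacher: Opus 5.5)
Your reduction is sound, and up to the determination of $\nu_3$ it is the paper's argument. For $\lambda=0$ the basis $dx,x\,dx,x^2dx,w\,dx,xw\,dx,w^2dx$ gives $\eta_1=3$ and $\eta_2=0$, hence $\nu_6=3$ and $\nu_3=0$. For $\lambda\ne0$ the values $\Cart(w\,dx)=(x^2+\rho x)dx$, $\Cart(xw\,dx)=(w+y)dx$ and $\Cart(u\,dx)=(x^2+\sigma(w+y))dx$, with $\rho^2=\lambda$ and $\sigma^2=c$, give $(4,2,1,0)$ and leave $\nu_3\in\{1,2\}$. Your route to $W_5$, namely $V(W_2^\perp)$, is the adjoint of the paper's $N_5=(F^{-1}(N_1))^\perp$. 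Since $\langle F\xi,\eta\rangle=0\iff\langle\xi,V\eta\rangle=0$, one has $(VM)^\perp=F^{-1}(M^\perp)$, so $V(W_2^\perp)=F^{-1}(W_2)^\perp=W_7^\perp$. So the strategy is correct.

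The gap is in the step that decides between $(0,1,1,2,3,4)$ and $(0,1,2,2,3,4)$. This is the real content of the second half, and you only say you expect $\nu_3=1$. The machinery you sketch for it would not work as stated.
\begin{itemize}
\item In Oda's description $V(\omega_0,\omega_1,f)=(\Cart\omega_0,\Cart\omega_1,0)$. No ``odd part$^{1/2}$'' acts on $f$, and $V$ lands in $\HH(C_\lambda,\Omega^1)$. For each class of $W_{10}$ outside $W_6$, what you need is the differential component of a de Rham lift.
\item With your cover (the affine curve and a neighbourhood of $P$), such a lift is a differential regular on $U$ whose principal part at $P$ equals that of $df$. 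Finding it requires genuine Laurent expansions, and pole orders do not determine these.
\item Your representatives $x^{-1}h$ are not a basis of $H^1(C_\lambda,\mathcal O)$. For $h=1,x,x^2,xw$ they are regular at $P$ or on $U$, so they represent $0$.
\end{itemize}

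These problems disappear with the cover $U_0=C_\lambda\setminus\{P\}$, $U_1=C_\lambda\setminus x^{-1}(0)$ used in the paper. There the pairing of $h\,dx$ with the class of $f$ is $\operatorname{Res}_{x=0}(T(fh)\,dx)$. Here $T$ is the trace to $k(x)$, and $T(a_0+a_1y+a_2w+a_3yw)=a_3$ for $a_i\in k(x)$. The image of $W_2^\perp$ in $H^1(C_\lambda,\mathcal O)$ is spanned by the classes of $y/x^2$, $y/x$, $w/x$ and $yw/x^3$. Lifts can be read off from
\begin{gather*}
d(y/x^2)=x^2dx,\qquad d(y/x)=x^3dx+x^{-2}y\,dx,\qquad d(w/x)=(x^3+\lambda x)dx+x^{-2}w\,dx,\\
d(yw/x^3)=(xy+xw)dx+(\lambda x^{-1}y+x^{-4}yw)dx.
\end{gather*}
In each case the first summand is regular on $U_0$ and the second (zero for $y/x^2$) is regular on $U_1$. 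Applying $\Cart$ to the first summands gives $0$, $x\,dx$, $(x+\rho)dx$ and $w\,dx$. Hence
\[
W_5=\Cart(\HH(C_\lambda,\Omega^1))+\langle x,x+\rho,w\rangle_k\,dx=\langle1,x,x^2,w,y\rangle_k\,dx,
\]
which is the paper's $N_5$. Then $W_3=\langle1,x,x^2\rangle_k\,dx$ and $\nu_3=1$.

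The paper's dual route is shorter. It needs only one class $\xi_0$ lifting $[y/x^2]$. For it, $F(\xi_0)=dx$ follows from the function component alone, because $F(\omega_0,\omega_1,f)=(0,0,f^2)$. After that, only one linear condition on $\HH(C_\lambda,\Omega^1)$ is needed. Your route needs four de Rham lifts but never computes $F$.
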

\begin{proof}
At $\lambda=0$, the curve has equation $w^4+w=x^5$. The six forms
\[
dx,\quad x\,dx,\quad x^2\,dx,\quad w\,dx,\quad xw\,dx,\quad w^2\,dx
\]
form a basis: their coefficient functions have distinct pole orders $0,4,8,5,9,10$, and $\divis(dx)=10P$. Their Cartier images, in the same order, are
\[
0,\quad dx,\quad 0,\quad x^2\,dx,\quad w^2\,dx,\quad0.
\]
Thus $\eta_1=3$ and $\eta_2=0$. Equation~\eqref{eq:EOiterates} gives $\nu_6=3$ and $\nu_3=0$, which force \eqref{eq:EOzero}. This is the $q=4$ Hermitian case of the known computation of Pries and Weir \cite[Section~2.2.2]{PW}.

For $\lambda\ne0$, retain $w,u,c$ from the proof of Proposition~\ref{ex:fibreVariation}, so that $c^2\lambda=1$ and $u=y+cxw$. Put $\rho^2=\lambda$, $\sigma^2=c$, and $\tau=w+y$. A basis of regular differentials is now
\[
dx,\quad x\,dx,\quad x^2\,dx,\quad w\,dx,\quad xw\,dx,\quad u\,dx,
\]
since the coefficient pole orders are $0,4,8,6,10,9$. The equations imply
\[
dy=x^4dx,\qquad dw=(x^4+\lambda x^2)dx,
\qquad w+x\frac{dw}{dx}=(w+y)^2.
\]
Using $\Cart((A^2+xB^2)dx)=B\,dx$, we obtain
\begin{align*}
\Cart(dx)&=\Cart(x^2dx)=0,&\Cart(x\,dx)&=dx,\\
\Cart(w\,dx)&=(x^2+\rho x)dx,&\Cart(xw\,dx)&=\tau\,dx,\\
\Cart(u\,dx)&=(x^2+\sigma\tau)dx,&\Cart(\tau\,dx)&=\rho x\,dx.
\end{align*}
Consequently, the successive images are
\begin{align*}
\operatorname{im}\Cart&=\langle1,x,x^2,\tau\rangle_k\,dx,\\
\operatorname{im}\Cart^2&=\langle1,x\rangle_k\,dx,\\
\operatorname{im}\Cart^3&=\langle1\rangle_k\,dx,\qquad
\operatorname{im}\Cart^4=0.
\end{align*}
Here $\rho\ne0$ and $\tau=w+u+cxw$, so the displayed spanning sets have the asserted dimensions. Equation~\eqref{eq:EOiterates} gives
\[
\nu_6=4,\qquad\nu_4=2,\qquad\nu_2=1,\qquad\nu_1=0.
\]
The final-sequence inequalities then force $\nu_5=3$ and $\nu_3\in\{1,2\}$, so the final type is one of
\begin{equation}\label{eq:EOcandidates}
(0,1,1,2,3,4),\qquad (0,1,2,2,3,4).
\end{equation}
To distinguish these two possibilities, we show that $\nu_3=1$.

We use the de Rham description of the Dieudonn\'e module of $\operatorname{Jac}(C_\lambda)[2]$ \cite{Oda}. Let $U_0=C_\lambda\setminus\{P\}$ and $U_1=C_\lambda\setminus x^{-1}(0)$. Classes in $H^1_{\mathrm{dR}}(C_\lambda)$ are represented by triples $(\omega_0,\omega_1,f)$, where $\omega_i$ is a differential regular on $U_i$, $f$ is regular on $U_0\cap U_1$, and $\omega_0-\omega_1=df$, modulo the triples $(dg_0,dg_1,g_0-g_1)$ with $g_i$ regular on $U_i$. In these terms
\[
F(\omega_0,\omega_1,f)=(0,0,f^2),\qquad
V(\omega_0,\omega_1,f)=(\Cart\omega_0,\Cart\omega_1,0),
\]
and $H^0(C_\lambda,\Omega^1)=\ker F=\operatorname{im}V$ consists of the classes $(\omega,\omega,0)$. Choose a self-dual final filtration $(N_i)_{i=0}^{12}$ refining the canonical filtration \cite[Section~5 and Proposition~9.6]{Oort}. Thus $\dim N_i=i$, $N_{12-i}=N_i^{\perp}$, and the image under $V$ and inverse image under $F$ of each member are again members. For $0\le i\le6$, one has $\nu_i=\dim V(N_i)$. In particular, $N_6=H^0(C_\lambda,\Omega^1)$ and $N_1=\operatorname{im}\Cart^3=k\,dx$.

Put $f_0=y/x^2$, which is regular on $U_0\cap U_1$. Since $y^2=y+x^5$,
\[
f_0^2=x+\frac{y}{x^4},
\]
where $x$ is regular on $U_0$ and $y/x^4$ is regular on $U_1$, because $y$ has a pole of order $10$ at $P$. Thus the class of $f_0$ lies in the kernel of Frobenius on $H^1(C_\lambda,\mathcal O)$. Let $\xi_0\in H^1_{\mathrm{dR}}(C_\lambda)$ be any class with this image in $H^1(C_\lambda,\mathcal O)$. Adding the coboundary of $(x,y/x^4)$ to $(0,0,f_0^2)$ and using $dy=x^4dx$ gives
\[
F(\xi_0)=\bigl(dx,\ x^{-4}dy,\ 0\bigr)=(dx,dx,0),
\]
that is, $F(\xi_0)=dx$. As $\ker F=N_6$ and $F$ is $2$-linear, $F^{-1}(N_1)=N_6+k\xi_0$ has dimension $7$. Hence $N_7=N_6+k\xi_0$.

By self-duality, and because $N_6$ is maximal isotropic,
\[
N_5=N_7^{\perp}=\{\omega\in H^0(C_\lambda,\Omega^1):\langle\omega,\xi_0\rangle=0\}.
\]
For a regular differential $\omega$, the de Rham pairing $\langle\omega,\xi_0\rangle$ is the Serre-duality pairing of $\omega$ with the class of $f_0$ in $H^1(C_\lambda,\mathcal O)$. Up to a nonzero constant, it is the sum of the residues of $f_0\omega$ at the four points of $C_\lambda\setminus U_1=x^{-1}(0)$. These points are unramified over the $x$-line. Hence, for $\omega=h\,dx$ and $T=\operatorname{Tr}_{k(C_\lambda)/k(x)}$, this sum is
\[
\operatorname{Res}_{x=0}\Bigl(\frac{T(yh)}{x^2}\,dx\Bigr),
\]
the coefficient of $x$ in the polynomial $T(yh)$. Write $k(C_\lambda)=k(x,y,w)$ with $y^2+y=x^5$ and $w^2+w=y+\lambda x^3$. In each quadratic step the trace of $1$ is $0$ and the trace of the generator is $1$. Therefore
\begin{gather*}
T(y)=T(xy)=T(x^2y)=0,\qquad T(yw)=1,\qquad T(xyw)=x,\\
T(yu)=T(y^2)+cx\,T(yw)=cx,
\end{gather*}
where $u=y+cxw$ as above. The pairing with $\xi_0$ thus vanishes on $dx$, $x\,dx$, $x^2dx$ and $w\,dx$, and takes the values $1$ and $c$, up to a common nonzero constant, on $xw\,dx$ and $u\,dx$. Consequently,
\[
N_5=\langle1,x,x^2,w,u+cxw\rangle_k\,dx=\langle1,x,x^2,w,y\rangle_k\,dx.
\]

Finally, $N_3=V(N_5)=\Cart(N_5)$. Since $y=y^2+x(x^2)^2$, one has $\Cart(y\,dx)=x^2dx$. Together with the Cartier values computed above, including $\Cart(dx)=\Cart(x^2dx)=0$,
\[
N_3=\bigl\langle\Cart(x\,dx),\Cart(w\,dx),\Cart(y\,dx)\bigr\rangle_k
=\langle1,\ x^2+\rho x,\ x^2\rangle_k\,dx
=\langle1,x,x^2\rangle_k\,dx,
\]
because $\rho\ne0$. Hence $V(N_3)=\Cart(N_3)=k\,dx$ and $\nu_3=1$. This proves \eqref{eq:EOgeneric}. The final value is $3$ at $\lambda=0$ and $4$ otherwise, so the two parameter loci share no Ekedahl--Oort type.
\end{proof}

\section*{Acknowledgements}
The second author was partially supported by CNPq grant no. 302774/2025-4, FAPESP grant no. 2024/00923-6, and FAEPEX grant no. 3485/25.

\end{document}